\definecolor{myorange}{RGB}{180,90,0}
\definecolor{mygreen}{RGB}{70,140,0}
\def\wrtext#1{\relax\ifmmode{\leavevmode\hbox{#1}}\else{#1}\fi}
\def\abs#1{\left|#1\right|}
\def\begeq{\begin{equation}}
\def\endeq{\end{equation}}
\def\part#1{\frac{\partial}{\partial #1}}
\def\norm#1{||\,#1\,||}
\newcommand{\real}{\mbox{\bf R}}
\newcommand{\comp}{\mbox{\bf C}}
\newcommand{\nat}{\mbox{\bf N}}
\renewcommand{\Re}{\mbox{\rm Re\,}}
\def\Re{{\rm Re\,}}
\newtheorem{dref}{Definition}[section]
\newtheorem{theo}[dref]{Theorem}
\newtheorem{prop}[dref]{Proposition}
\newenvironment{proof}{\vspace{.3cm}\noindent{{\em Proof:}}}{\hfill$\Box$}
\title{Asymptotics for Bergman projections with smooth weights: a direct approach}
\author{Michael \textsc{Hitrik} \footnote{Department of Mathematics, UCLA, Los Angeles CA 90095-1555, USA, {\sf hitrik@math.ucla.edu}} \and Matthew \textsc{Stone} \footnote{Department of Mathematics, UCLA, Los Angeles CA 90095-1555, USA, {\sf stoney31415@math.ucla.edu}}}
\date{}
\begin{document}
\maketitle

\vspace*{1cm}
\noindent
{\bf Abstract}: We adapt the direct approach to the semiclassical Bergman kernel asymptotics, developed recently in~\cite{DelHiSj} for real analytic exponential weights, to the smooth case. Similar to~\cite{DelHiSj}, our approach avoids the use of the Kuranishi trick and it allows us to construct the amplitude of the asymptotic Bergman projection by means of an asymptotic inversion of an explicit Fourier integral operator.

\section{Introduction}
\setcounter{equation}{0}
\label{sec_introduction}
Let $\Omega \subset \comp^n$ be an open pseudoconvex domain and let $\Phi\in C^{\infty}(\Omega)$ be a strictly plurisubharmonic function. Exponentially weighted spaces of holomorphic functions of the form $H_{\Phi}(\Omega) = {\rm Hol}(\Omega) \cap L^2(\Omega, e^{-2\Phi/h})$ occur naturally in analytic microlocal analysis~\cite{Sj82},~\cite{GrSjDuke},~\cite{Sj96},~\cite{MelSj},~\cite{HiSj15}, among other areas, when passing from the real to the complex domain by means of an FBI transform~\cite{Sj82},~\cite{Sj96}. Associated to the space $H_{\Phi}(\Omega)$ is the orthogonal (Bergman) projection
\begeq
\label{eq1.1}
\Pi: L^2(\Omega, e^{-2\Phi/h}) \rightarrow H_{\Phi}(\Omega),
\endeq
and complex microlocal techniques have long been known to be useful in the study of the asymptotic behavior of $\Pi$ in the semiclassical limit $h\rightarrow 0^+$. The existence of a complete asymptotic expansion for the Schwartz kernel of $\Pi$ close to the diagonal has been established in the pioneering contributions~\cite{Catlin},~\cite{Z98}, in the context of high powers of a holomorphic line bundle with positive curvature, over a complex compact manifold. See also~\cite{Charles},~\cite{DLM},~\cite{MM}, as well as~\cite{E1},~\cite{E2} for the case of domains in $\comp^n$. The original proofs in~\cite{Catlin},~\cite{Z98} relied on the description of singularities of the Szeg\H{o} kernel on the boundary of a strictly pseudoconvex smooth domain given by the Boutet de Monvel -- Sj\"ostrand parametrix in the seminal work~\cite{BoSj}, which in turn depended, in particular, on the theory of Fourier integral operators with complex phase functions developed in~\cite{MelSj74}. More self-contained explicit approaches to the Bergman kernel asymptotics have subsequently been developed in~\cite[Section 3]{MelSj} and~\cite{BBSj}, with the former work starting with the approximate projection property $\widetilde{\Pi}^2 = \widetilde{\Pi} + {\cal O}(h^{\infty})$, while the approach of~\cite{BBSj} focuses more directly on the reproducing property of the Bergman projection on the space $H_{\Phi}(\Omega)$.

\medskip
\noindent
To motivate a bit further, let us recall that the basic idea of the approach of~\cite{BBSj} consists, roughly speaking, of expressing the identity operator on $H_{\Phi}(\Omega)$ in such a way that it automatically becomes the asymptotic Bergman projection, at least locally. This is accomplished, essentially, by first representing the identity as an $h$--pseudodifferential operator in the complex domain, and then passing to a non-standard phase via a suitable change of variables, usually referred to as the Kuranishi trick. See~\cite[Chapter 4]{Sj82},~\cite[Section 2.2]{BBSj}, and also~\cite[Chapter 3]{GrSj} for the standard application of the Kuranishi trick to changes of variables for pseudodifferential operators in the real domain. Now the Kuranishi trick becomes somewhat complicated to execute in situations when the Levi form $i\partial \overline{\partial}\Phi\geq 0$ of the weight $\Phi$ becomes almost degenerate in some directions. Such nearly degenerate weights occur naturally, in particular, in the work in progress~\cite{HiSj}, devoted to a heat evolution approach to second microlocalization with respect to a real analytic hypersurface. A direct approach to the semiclassical asymptotics for Bergman projections, not relying upon any changes of variables, has recently been developed in~\cite{DelHiSj} in the non-degenerate case, assuming that the weight $\Phi$ is real analytic. The approach of~\cite{DelHiSj} has allowed, in particular, to give a quick proof of a result of~\cite{Del},~\cite{RSV}, stating that in the analytic case, the amplitude of the asymptotic Bergman projection is given by a classical analytic symbol. Our purpose here is to adapt the approach of~\cite{DelHiSj} to the $C^{\infty}$ case. The following is the main result of this work.

\begin{theo}
\label{Theorem1}
Let $\Omega \subset \comp^n$ be open and let $\Phi \in C^{\infty}(\Omega;\real)$ be strictly plurisubharmonic in $\Omega$. Let $x_0\in \Omega$. There exist a classical elliptic symbol $a(x,\widetilde{y};h)\in S^0_{{\rm cl}}({\rm neigh}((x_0,\overline{x_0}),\comp^{2n}))$ of the form
$$
a(x,\widetilde{y};h) \sim \sum_{j=0}^{\infty} h^j a_j(x,\widetilde{y}),
$$
in $C^{\infty}$, with $a_j \in C^{\infty}({\rm neigh}((x_0,\overline{x_0}),\comp^{2n}))$, holomorphic to $\infty$--order along the anti-diagonal $\widetilde{y} = \overline{x}$, satisfying
\begeq
\label{eq1.2}
(A a)(x,\overline{x};h) = 1 + {\cal O}(h^{\infty}), \quad x\in {\rm neigh}(x_0,\comp^n),
\endeq
where $A$ is an elliptic Fourier integral operator, and small open neighborhoods $U \Subset V \Subset \Omega$ of $x_0$, with $C^{\infty}$--boundaries, such that the operator
\begeq
\label{eq1.3}
\widetilde{\Pi}_{V} u(x) = \frac{1}{h^n} \int_{V} e^{\frac{2}{h}\Psi(x,\overline{y})} a(x,\overline{y};h)  u(y) e^{-\frac{2}{h}\Phi(y)}\, L(dy)
\endeq
satisfies
\begeq
\label{eq1.4}
\widetilde{\Pi}_V - 1 = {\cal O}(h^{\infty}): H_{\Phi}(V) \rightarrow L^2(U, e^{-2\Phi/h}\, L(dx)).
\endeq
Here in {\rm (\ref{eq1.3})}, the $C^{\infty}$ function $\Psi$ is holomorphic to $\infty$--order on the anti-diagonal, $\Psi(x,\overline{x}) = \Phi(x)$, and $L(dy)$ is the Lebesgue measure on $\comp^n$.
\end{theo}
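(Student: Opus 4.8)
The plan is to work throughout at the level of asymptotic expansions in $h$ and ``to infinite order'' along the anti-diagonal, since in the $C^{\infty}$ setting there is no convergence to exploit. The first step is geometric: construct the phase $\Psi$. I would take $\Psi(x,\widetilde y)$ to be an almost holomorphic extension, from the anti-diagonal $\{\widetilde y = \overline x\}$, of the polarization of $\Phi$ --- the formal power series in $(x-x_0, \widetilde y - \overline{x_0})$ obtained from the Taylor expansion of $\Phi$ at $x_0$ by substituting $\widetilde y$ for $\overline x$, Borel-summed and then corrected so that $\overline\partial_{(x,\widetilde y)}\Psi$ vanishes to infinite order on $\{\widetilde y = \overline x\}$. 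This is the Melin--Sj\"ostrand construction; it produces $\Psi\in C^{\infty}$ with $\Psi(x,\overline x)=\Phi(x)$, the reality relation $\overline{\Psi(x,\overline y)}=\Psi(y,\overline x)$, and, crucially from strict plurisubharmonicity, the fundamental estimate
\[ 2\,\Re\Psi(x,\overline y) - \Phi(x) - \Phi(y) \leq -\frac{1}{C}|x-y|^2, \qquad x,y\in {\rm neigh}(x_0,\comp^n). \]
This estimate localizes the kernel of $\widetilde\Pi_V$ to an $\mathcal{O}(\sqrt h)$--neighborhood of the diagonal and makes all the contour deformations and cutoffs below cost only $\mathcal{O}(e^{-1/Ch})$.

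The second step derives the functional equation for the amplitude. Let $u\in H_{\Phi}(V)$ and compute $\widetilde\Pi_V u(x)$ for $x$ near $x_0$. Writing $e^{-2\Phi(y)/h}=e^{-2\Psi(y,\overline y)/h}$, the integrand depends on $\overline y$ only through $e^{\frac2h(\Psi(x,\overline y)-\Psi(y,\overline y))}a(x,\overline y;h)$, which, viewing $\overline y$ as an independent variable $\theta$ near $\overline x$, is almost holomorphic in $\theta$. By Stokes' theorem I would deform the contour $\{\theta=\overline y\}$ to one through the critical point $\theta_c(x,y)$ of $\theta\mapsto \Psi(x,\theta)-\Psi(y,\theta)$, which exists and is close to $\overline x$ because the mixed Hessian $\partial_x\partial_\theta\Psi$, equal on the diagonal to the complex Hessian of $\Phi$, is non-degenerate; the $\overline\partial_\theta$--errors from almost-holomorphy are $\mathcal{O}(h^{\infty})$ thanks to the exponential weight and the fundamental estimate, and the deformed contour stays inside the domain where $\Psi$ and $a$ are defined. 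The remaining $y$--integral is then a non-degenerate complex Laplace/stationary-phase integral, and the upshot is that $\widetilde\Pi_V u(x)$ equals, modulo $\mathcal{O}(h^{\infty})$ in the weighted norm, an $h$--pseudodifferential operator applied to $u$. Since such an operator acts on holomorphic functions only through the restriction of its symbol to the Lagrangian $\Lambda_\Phi=\{(x,\xi):\xi=\frac2i\partial_x\Phi(x)\}$, the identity $\widetilde\Pi_V u = u + \mathcal{O}(h^{\infty})$ for all holomorphic $u$ is equivalent to the single equation $(Aa)(x,\overline x;h)=1+\mathcal{O}(h^{\infty})$, where $A$ is the explicit Fourier integral operator produced by this computation; its principal symbol is a nonzero constant multiple of $\det(\partial_x\partial_{\overline x}\Phi)(x)$, so $A$ is elliptic.

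With $A$ and its ellipticity in hand, the third step is the asymptotic inversion. Expanding $A = A_0 + hA_1 + \cdots$ and $a\sim\sum_j h^j a_j$, the equation $(Aa)(x,\overline x)=1$ becomes a triangular system: $A_0 a_0 = 1$ on the anti-diagonal determines $a_0(x,\overline x)$ by ellipticity, and then each $a_j(x,\overline x)$ is determined recursively from $a_0,\dots,a_{j-1}$. Each $a_j$, a priori defined only on $\{\widetilde y=\overline x\}$, is extended to a full neighborhood as a $C^{\infty}$ function holomorphic to infinite order there --- the extension off the anti-diagonal being an essentially free choice, reflecting the known fact that the amplitude is determined only modulo functions vanishing to infinite order on the anti-diagonal and that such a change does not affect $\widetilde\Pi_V$ modulo $\mathcal{O}(h^{\infty})$. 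A Borel summation produces a genuine classical elliptic symbol $a\in S^0_{\rm cl}$ with $a\sim\sum h^j a_j$ and $(Aa)(x,\overline x)=1+\mathcal{O}(h^{\infty})$.

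Finally I would assemble $(\ref{eq1.4})$ and fix the neighborhoods. Running the second step in reverse with this $a$ gives $\widetilde\Pi_V u - u = \mathcal{O}(h^{\infty})$; shrinking to $U\Subset V$ with $C^{\infty}$--boundary is harmless, and restricting the $y$--integral from $\Omega$ to $V$ costs only $\mathcal{O}(e^{-1/Ch})$ since for $x\in U$ and $y\notin V$ one has $|x-y|\geq \dist(U,\partial V)>0$, so the fundamental estimate makes the integrand exponentially small --- the same remark controls the boundary contributions from $\partial V$. The operator bound $\widetilde\Pi_V - 1 = \mathcal{O}(h^{\infty}):H_{\Phi}(V)\to L^2(U,e^{-2\Phi/h})$ then follows from the standard $L^2$ estimates for such $h$--pseudodifferential operators in the complex domain (Schur's lemma, using the Gaussian decay of the kernel). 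I expect the second step to be the main obstacle: one must simultaneously keep the deformed contour inside the common domain of definition of $\Psi$ and $a$ (a geometric constraint governed by non-degeneracy of the Levi form), keep the phase of the correct sign along the homotopy so that Stokes' theorem yields only $\mathcal{O}(e^{-1/Ch})$ boundary terms, and show that the accumulated $\overline\partial$--errors sum to $\mathcal{O}(h^{\infty})$; by contrast, once $A$ is identified and shown elliptic, the third and fourth steps are soft.
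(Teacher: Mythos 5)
Your steps 1 and 3 match the paper: $\Psi$ is the Melin--Sj\"ostrand almost holomorphic extension, the fundamental estimate $\Phi(x)+\Phi(y)-2\Re\Psi(x,\overline y)\asymp|x-y|^2$ is Proposition \ref{prop1}, and the amplitude is obtained from a triangular system $c_0=1$, $c_\ell=0$ solved by ellipticity of the leading stationary-phase coefficient, with almost holomorphic extension of each $a_j$ off the anti-diagonal and Borel summation. But your step 2 has a concrete error and your step 4 has a genuine gap. First, for fixed $x\neq y$ the map $\theta\mapsto\Psi(x,\theta)-\Psi(y,\theta)$ has \emph{no} critical point: its $\theta$-gradient is $\approx\Phi''_{\overline x x}\cdot(x-y)\neq 0$; the relevant critical point exists only in the joint variables. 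What you are reaching for is the Kuranishi change of variables reducing $\widetilde\Pi_V$ to an $h$-pseudodifferential operator, which is precisely the route this paper is built to avoid. The paper instead defines $A$ directly as the explicit operator $A_\Gamma$ of (\ref{eq2.31}) with the four-term phase $\varphi(y,\widetilde x;x,\widetilde y)=\Psi(x,\widetilde y)-\Psi(x,\widetilde x)-\Psi(y,\widetilde y)+\Psi(y,\widetilde x)$, integrated over a good contour through the critical point $(x,\widetilde y)=(y,\widetilde x)$ of $(x,\widetilde y)\mapsto\varphi$; equation (\ref{eq1.2}) is then proved by contour-independence (Proposition \ref{prop_contour}) plus genuine complex stationary phase, with no reduction of $\widetilde\Pi_V$ to a pseudodifferential operator. (Minor: the paper gets $a_0(x,\overline x)=A_n\det\Phi''_{x\overline x}(x)$, so the principal symbol of $A$ is proportional to $1/\det\Phi''_{x\overline x}$, not to $\det\Phi''_{x\overline x}$ as you state.)

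The more serious gap is the asserted equivalence between $(Aa)(x,\overline x;h)=1+{\cal O}(h^{\infty})$ and $\widetilde\Pi_V u=u+{\cal O}(h^{\infty})$ on $H_\Phi(V)$, and the plan to obtain (\ref{eq1.4}) by ``running the second step in reverse.'' The reproducing property fails for general $u\in H_\Phi(V)$: a unit-norm $u$ concentrated near $\partial V$ (e.g.\ a normalized reproducing kernel at a point near the boundary) makes the boundary terms in every Stokes deformation $O(1)$ rather than $O(e^{-1/Ch})$ — your remark that $|x-y|\geq{\rm dist}(U,\partial V)$ controls the cutoffs does not control these, since the deformations happen at interior $y$ where $u$ may carry all its mass. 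This is exactly why (\ref{eq1.4}) is only an estimate into $L^2(U)$ with $U\Subset V$, and why the paper's proof needs all of Sections \ref{sec:repr-property}--\ref{sec:end-proof}: a weak reproducing property $(\widetilde\Pi_Vu,v)_{L^2_\Phi}=(u,v)_{H_\Phi}+{\cal O}(h^\infty)\|u\|\,\|v\|$ tested only against $v\in H_{\Phi_1}(V)$ with $\Phi_1<\Phi$ off a small neighborhood of $x_0$, proved via a resolution of the identity and a contour deformation in $\comp^{4n}$ between the product contour $\Gamma_{V_1}\times\Gamma_{V_2}$ and the composed contour, with quantitative control of the $\overline\partial$-errors; then H\"ormander's $L^2$-estimates for $\overline\partial$ (Proposition \ref{prop_orth_proj}) to compensate for the fact that $\widetilde\Pi_V u$ is not holomorphic and to convert the weak statement into the norm bound on $U$. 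None of this is recoverable from your step 2, so the passage from (\ref{eq1.2}) to (\ref{eq1.4}) — which you yourself flag as the main obstacle — remains unproved in your proposal.
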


\medskip
\noindent
When proving Theorem \ref{Theorem1}, we proceed largely along the lines of~\cite{DelHiSj}, while making essential use of the techniques of almost holomorphic extensions~\cite{Ho69},~\cite{Ma},\cite{MelSj74}. Compared to~\cite{DelHiSj}, we also have to rely on the $L^2$ estimates for the $\overline{\partial}$ operator~\cite{Ho65} even more, to account for the fact that functions in the range of the operator $\widetilde{\Pi}_V$ in (\ref{eq1.3}) are not quite holomorphic. The plan of the paper is as follows: In Section \ref{sec:asymp_inversion}, we introduce an explicit elliptic Fourier integral operator $A$ in the complex domain, with the phase defined via an almost holomorphic extension of the weight $\Phi$, and obtain a $C^{\infty}$ symbol $a$, holomorphic to $\infty$--order along the anti-diagonal, as a solution of (\ref{eq1.2}). Let us observe that while the corresponding discussion in~\cite{DelHiSj} in the analytic case makes use of the existence of a microlocal inverse of the corresponding analytic Fourier integral operator, here the asymptotic inversion of $A$, with ${\cal O}(h^{\infty})$ errors, proceeds explicitly by considering the stationary phase expansion for $Aa$. Section \ref{sec:repr-property} is devoted to showing the approximate reproducing property for the operator $\widetilde{\Pi}_V$ in (\ref{eq1.3}), on the level of scalar products, $(\widetilde{\Pi}_Vu,v)_{L^2_{\Phi}(V)} = (u,v)_{H_{\Phi}(V)} +
{\cal O}(h^{\infty})$, for $u,v\in H_{\Phi}(V)$, with $v$ concentrated in a small neighborhood of $x_0$. Similar to~\cite{DelHiSj}, the proof depends on a resolution of the identity and a contour deformation argument, with some additional care required due to the lack of holomorphy in (\ref{eq1.3}). The proof of Theorem \ref{Theorem1} is then concluded in Section \ref{sec:end-proof}, making use of the $\overline{\partial}$ techniques. Finally, in Section \ref{secLG}, we recall, following~\cite{BBSj}, the link between the operator $\widetilde{\Pi}_V$ in Theorem \ref{Theorem1} and the orthogonal projection (\ref{eq1.1}), showing that the kernels of (\ref{eq1.1}) and (\ref{eq1.3}) are close pointwise, locally.

\medskip
\noindent
{\bf Acknowledgments}. We are very grateful to Alix Deleporte and Johannes Sj\"ostrand for very helpful discussions and for encouraging us to pursue the $C^{\infty}$ case.

\section{Asymptotic inversion of a Fourier integral operator}
\label{sec:asymp_inversion}
\setcounter{equation}{0}
The discussion in this section can be viewed as a natural analog in the $C^{\infty}$--setting of \cite[Section 3]{DelHiSj}, working systematically
with almost holomorphic extensions of the weights,~\cite{Ho69},~\cite{MelSj74}. Let $\Omega \subset \comp^n$ be open, and let $\Phi\in C^{\infty}(\Omega;\real)$ be strictly plurisubharmonic in $\Omega$,
\begeq
\label{eq2.1}
\sum_{j,k=1}^n \frac{\partial^2 \Phi}{\partial x_j \partial \overline{x}_k}(x) \xi_j \overline{\xi}_k \geq c(x) \abs{\xi}^2, \quad x\in \Omega, \quad \xi \in \comp^n,
\endeq
where $0 < c\in C(\Omega)$. Let $x_0 \in \Omega$. Identifying $\comp^n_x$ with the anti-diagonal $\{(x,y)\in \comp^{2n}; y = \overline{x}\}$, we see that exists $\Psi \in C^{\infty}({\rm neigh}((x_0,\overline{x_0}),\comp_{x,y}^{2n}))$ such that
\begeq
\label{eq2.2}
\Psi(x,\overline{x}) = \Phi(x), \quad x\in {\rm neigh}(x_0,\comp^n),
\endeq
and for every $N$,
\begeq
\label{eq2.3}
\left(\partial_{\overline{x}_j}\Psi\right)(x,y) = {\cal O}_N(\abs{y-\overline{x}}^N),\quad \left(\partial_{\overline{y}_j}\Psi\right)(x,y) = {\cal O}_N(\abs{y-\overline{x}}^N),\quad 1 \leq j \leq n,
\endeq
locally uniformly, see~\cite{Ho69},~\cite{Ma},~\cite{MelSj74},~\cite[Chapter 8]{DiSj}. Here we work with the usual operators,
\begeq
\label{eq2.3.1}
\partial_{\overline{x}_j} = \frac{1}{2}\left(\partial_{{\rm Re}\, x_j} + i \partial_{{\rm Im}\, x_j}\right), \quad \partial_{\overline{y}_j} = \frac{1}{2}\left(\partial_{{\rm Re}\, y_j} + i \partial_{{\rm Im}\, y_j}\right),\quad 1 \leq j \leq n.
\endeq
\begeq
\label{eq2.3.1.1}
\partial_{x_j} = \frac{1}{2}\left(\partial_{{\rm Re}\, x_j} - i \partial_{{\rm Im}\, x_j}\right), \quad \partial_{y_j} = \frac{1}{2}\left(\partial_{{\rm Re}\, y_j} - i \partial_{{\rm Im}\, y_j}\right),\quad 1 \leq j \leq n.
\endeq
We notice that (\ref{eq2.3}) and~\cite[Lemma X.2.2]{Treves} imply that for all $\alpha$, $\beta$, $\gamma$, $\delta\in \nat^n$ we have
\begeq
\label{eq2.3.2}
\left(D_x^{\alpha} D_{\overline{x}}^{\beta} D_y^{\gamma} D_{\overline{y}}^{\delta} \partial_{\overline{x}_j}\Psi\right)(x,y) = {\cal O}_{\alpha,\beta,\gamma, \delta, N}(\abs{y-\overline{x}}^N), \quad 1 \leq j \leq n,
\endeq
\begeq
\label{eq2.3.3}
\left(D_x^{\alpha} D_{\overline{x}}^{\beta} D_y^{\gamma} D_{\overline{y}}^{\delta} \partial_{\overline{y}_j}\Psi\right)(x,y) = {\cal O}_{\alpha,\beta,\gamma, \delta, N}(\abs{y-\overline{x}}^N),\quad 1\leq j \leq n,
\endeq
locally uniformly.

\medskip
\noindent
Our starting point is the following classical estimate, see for instance~\cite{BBSj}. Since related computations based on Taylor expansions will appear below, it will be natural to recall the proof.
\begin{prop}
\label{prop1}
We have
\begeq
\label{eq2.4}
\Phi(x) + \Phi(y) - 2{\rm Re}\, \Psi(x,\overline{y}) \asymp \abs{x-y}^2, \quad x,y\in {\rm neigh}(x_0,\comp^n).
\endeq
\end{prop}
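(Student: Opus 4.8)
The plan is to run the classical Taylor-expansion argument. Set $G(x,y) := \Phi(x)+\Phi(y)-2\Re\Psi(x,\overline y)$; the goal is to show that $G$, together with its first derivatives, vanishes along the diagonal $\{x=y\}$, while its second-order part transverse to the diagonal is precisely the Levi form of $\Phi$, which is positive definite by (\ref{eq2.1}). The estimate (\ref{eq2.4}) then follows from Taylor's formula with remainder after shrinking the neighborhood of $x_0$. Denoting by $\eta$ the second slot of $\Psi$ (so that the kernel in (\ref{eq1.3}) is $\Psi(x,\eta)|_{\eta=\overline y}$, and (\ref{eq2.3}) reads $(\partial_{\overline\eta_j}\Psi)(x,\eta)={\cal O}_N(\abs{\eta-\overline x}^N)$), I would first record the pointwise identities
\[
(\partial_{\eta_j}\Psi)(x,\overline x) = \overline{\partial_{x_j}\Phi(x)}, \qquad (\partial_{\eta_j}\partial_{\eta_k}\Psi)(x,\overline x) = \overline{\partial_{x_j}\partial_{x_k}\Phi(x)}, \qquad 1\le j,k\le n,
\]
valid for $x$ near $x_0$. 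These follow by differentiating $\Psi(x,\overline x)=\Phi(x)$ once, respectively twice, in $\overline x$: the chain rule produces the displayed $\eta$-slot terms together with terms carrying an antiholomorphic derivative of $\Psi$ (in the $x$- or the $\eta$-slot), and the latter vanish on the anti-diagonal $\eta=\overline x$ by (\ref{eq2.3}), (\ref{eq2.3.2}), (\ref{eq2.3.3}); the conjugations come from $\Phi$ being real-valued.

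Next, fix $x,y\in{\rm neigh}(x_0,\comp^n)$, put $z=y-x$, and write $\eta(t)=\overline x+t\overline z$, $t\in[0,1]$. From $\Psi(x,\overline y)=\Phi(x)+\int_0^1\frac{d}{dt}\Psi(x,\eta(t))\,dt$, expanding the $t$-derivative by the chain rule and noting that the term carrying $\partial_{\overline\eta_j}\Psi$ is ${\cal O}_N(\abs{z}^N)$ for every $N$ (locally uniformly, by (\ref{eq2.3})), one gets
\[
\Psi(x,\overline y)-\Phi(x) = \int_0^1\sum_{j=1}^n(\partial_{\eta_j}\Psi)(x,\eta(t))\,\overline z_j\,dt + {\cal O}(\abs{z}^{\infty}),
\]
while, since $\Phi$ is real, $\Phi(y)-\Phi(x)=2\Re\int_0^1\sum_{j=1}^n(\partial_{x_j}\Phi)(x+tz)\,z_j\,dt$ and $G(x,y)=\big(\Phi(y)-\Phi(x)\big)-2\Re\big(\Psi(x,\overline y)-\Phi(x)\big)$. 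Using $\Re w=\Re\overline w$ we obtain
\[
G(x,y) = 2\Re\int_0^1\sum_{j=1}^n\Big((\partial_{x_j}\Phi)(x+tz) - \overline{(\partial_{\eta_j}\Psi)(x,\eta(t))}\Big)\,z_j\,dt + {\cal O}(\abs{z}^{\infty}).
\]
Taylor expanding the difference in the integrand to first order about $t=0$ and invoking the two identities above (and (\ref{eq2.3}), (\ref{eq2.3.2}) to drop the antiholomorphic remainders), the terms of order $\abs{z}$ cancel, the terms of order $\abs{z}^2$ that are holomorphic in $z$ cancel as well, and what remains in the integrand is $t\sum_{j,k}\partial_{x_j}\partial_{\overline x_k}\Phi(x)\,z_j\overline z_k+{\cal O}(\abs{z}^3)$. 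Since this Hermitian form is real, integrating in $t$ yields
\[
G(x,y) = \sum_{j,k=1}^n\frac{\partial^2\Phi}{\partial x_j\,\partial\overline x_k}(x)\,(y_j-x_j)\,\overline{(y_k-x_k)} + {\cal O}(\abs{x-y}^3),
\]
locally uniformly for $x,y$ near $x_0$.

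Finally, by (\ref{eq2.1}) the leading quadratic form is $\ge c(x)\abs{x-y}^2$ with $c$ bounded below by a positive constant on a compact neighborhood of $x_0$, and of course it is also $\le C\abs{x-y}^2$ there; choosing the neighborhood of $x_0$ small enough that the cubic remainder is dominated by half the leading form gives (\ref{eq2.4}). The one place that genuinely requires care is the bookkeeping in the two Taylor expansions — in particular checking that the ``holomorphic--holomorphic'' quadratic contributions, which vanish identically in the real-analytic setting of \cite{DelHiSj} because there $\Psi(x,\eta)$ is honestly holomorphic, here cancel against one another up to ${\cal O}(\abs{z}^{\infty})$; this cancellation is exactly what the almost-holomorphy estimates (\ref{eq2.3})--(\ref{eq2.3.3}) are there to supply. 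Everything else is a routine application of Taylor's formula with remainder.
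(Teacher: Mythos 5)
Your proposal is correct and follows essentially the same route as the paper: Taylor expansion about the anti-diagonal, using the almost-holomorphy estimates (\ref{eq2.3})--(\ref{eq2.3.3}) to identify $(\partial_y\Psi)(x,\overline x)$ and $\Psi''_{yy}(x,\overline x)$ with $\partial_{\overline x}\Phi(x)$ and $\Phi''_{\overline x\,\overline x}(x)$, so that the first-order and holomorphic--holomorphic second-order terms cancel and the Levi form survives as the leading quadratic part, after which (\ref{eq2.1}) gives the two-sided bound. The only (immaterial) difference is organizational: you expand $\Psi$ in the second slot alone via an integral-remainder Taylor formula, whereas the paper expands $\Psi(x+z,\overline x+w)$ jointly in both slots about a diagonal base point.
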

\begin{proof}
Using (\ref{eq2.2}), (\ref{eq2.3.2}), (\ref{eq2.3.3}) we get
\begeq
\label{eq2.5}
(\partial _x\Psi )(x,\overline{x})=\partial _x\Phi (x),\ (\partial_y\Psi) (x,\overline{x})=\partial _{\overline{x}}\Phi (x),
\endeq
\begeq
\label{eq2.6}
\Psi''_{xx}(x,\overline{x}) = \Phi''_{xx}(x),\quad \Psi''_{xy}(x,\overline{x}) = \Phi''_{x\overline{x}}(x), \quad
\Psi''_{yy}(x,\overline{x}) = \Phi''_{\overline{x}\,\overline{x}}(x).
\endeq
By a Taylor expansion, we obtain then, using (\ref{eq2.2}), (\ref{eq2.3.2}), (\ref{eq2.3.3}), (\ref{eq2.5}), (\ref{eq2.6}),
\begin{multline*}
\Psi (x+z,\overline{x}+w)=\Phi (x)+\partial_x\Phi (x)\cdot z+\partial_{\overline{x}}\Phi (x)\cdot w \\
+ \frac{1}{2}\left(\Phi''_{xx}(x)z\cdot z + 2 \Phi''_{x\overline{x}}(x)w\cdot z + \Phi''_{\overline{x}\,\overline{x}}(x)w\cdot w\right) + {\cal O}(\abs{(z,w)}^3),
\end{multline*}
and therefore,
\begin{multline}
\label{eq2.7}
2\Re \Psi (x+z,\overline{x}+w)=2\Phi (x)+\partial _x\Phi (x)\cdot(z+\overline{w})+
\overline{\partial _x \Phi (x)}\cdot (\overline{z}+w) \\
+ \frac{1}{2} \left(2 {\rm Re}\, \left(\Phi''_{xx}(x)z\cdot z + \Phi''_{xx}(x)\overline{w}\cdot \overline{w}\right) + 2\Phi''_{x\overline{x}}(x)w\cdot z + 2\Phi''_{\overline{x}x}(x)\overline{w}\cdot\overline{z}\right) + {\cal O}(\abs{(z,w)}^3).
\end{multline}
Here we have used that $\partial_{\overline{x}}\Phi(x) = \overline{\partial_x \Phi(x)}$. Using also the Taylor expansions
\begin{multline}
\label{eq2.8}
\Phi (x+z)=\Phi (x)+\partial _x\Phi (x) \cdot z +\overline{\partial _x\Phi }\cdot \overline{z} \\
+ \frac{1}{2} \left(\Phi''_{xx}(x)z\cdot z + 2 \Phi''_{x\overline{x}}(x)\overline{z}\cdot z + \Phi''_{\overline{x}\,\overline{x}}(x) \overline{z}\cdot \overline{z}\right) + {\cal O}(\abs{z}^3),
\end{multline}
\begin{multline}
\label{eq2.9}
\Phi (x+\overline{w})=\Phi (x)+\partial _x\Phi (x) \cdot \overline{w}+\overline{\partial_x\Phi }(x)\cdot w \\
+ \frac{1}{2} \left(\Phi''_{xx}(x)\overline{w}\cdot \overline{w} + 2 \Phi''_{x\overline{x}}(x)w\cdot \overline{w} + \Phi''_{\overline{x}\,\overline{x}}(x)w\cdot w\right) + {\cal O}(\abs{w}^3),
\end{multline}
we get from (\ref{eq2.7}), (\ref{eq2.8}), and (\ref{eq2.9}),
\begeq
\label{eq2.10}
\Phi (x+z)+\Phi (x+\overline{w})-2\Re \Psi (x+z,\overline{x}+w) = \Phi''_{x\overline{x}}(x)(w-\overline{z})\cdot (\overline{w}-z) + {\cal O}(\abs{(z,w)}^3).
\endeq
Hence we get for $x,y\in {\rm neigh}(x_0,\comp^n)$,
\begin{equation}
\label{eq2.11}
\Phi (x)+\Phi (y) -2\Re \Psi (x,\overline{y}) = \Phi''_{x\overline{x}}(x_0)(\overline{y} - \overline{x})\cdot (y-x) +
{\cal O}(\abs{y-x}^3),
\end{equation}
and using the strict plurisubharmonicity of $\Phi$ given in (\ref{eq2.1}), we infer (\ref{eq2.4}).
\end{proof}

\bigskip
\noindent
Let us set, following~\cite[Section 3]{DelHiSj},
\begeq
\label{eq2.12}
\varphi(y,\widetilde{x};x,\widetilde{y}) = \Psi(x,\widetilde{y}) - \Psi(x,\widetilde{x}) - \Psi(y,\widetilde{y}) + \Psi(y,\widetilde{x}).
\endeq
We have $\varphi \in C^{\infty}({\rm neigh}((x_0,\overline{x_0};x_0,\overline{x_0}),\comp^{4n}))$. Of particular interest for us here are critical points of $(x,\widetilde{y}) \mapsto \varphi(y,\widetilde{x};x,\widetilde{y})$.

\begin{prop}
\label{prop2}
For each $(y,\widetilde{x}) \in {\rm neigh}((x_0,\overline{x_0}),\comp^{2n})$, the complex valued $C^{\infty}$ function
$$
{\rm neigh}((x_0,\overline{x_0}),\comp^{2n})\ni (x,\widetilde{y}) \mapsto \varphi(y,\widetilde{x};x,\widetilde{y})
$$
has a unique critical point given by $(x,\widetilde{y}) = (y,\widetilde{x})$. The corresponding critical value is equal to $0$, and when $\widetilde{x} = \overline{y}$, the quadratic part of the Taylor expansion of the $C^{\infty}$ function $\comp^{2n} \ni (z,w)\mapsto \varphi(y,\widetilde{x};y+z,\widetilde{x}+w)$ at $(z,w) = (0,0)$ is the non-degenerate holomorphic quadratic form on $\comp^{2n}$ given by
$(z,w)\mapsto \Phi''_{x\overline{x}}(y)w\cdot z$.
\end{prop}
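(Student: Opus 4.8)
The plan is to establish the three assertions in turn. The statement about the critical value is immediate: substituting $(x,\widetilde y)=(y,\widetilde x)$ into (\ref{eq2.12}) gives
$$
\varphi(y,\widetilde x;y,\widetilde x)=\Psi(y,\widetilde x)-\Psi(y,\widetilde x)-\Psi(y,\widetilde x)+\Psi(y,\widetilde x)=0.
$$
For the other two parts I would read ``critical point of $(x,\widetilde y)\mapsto\varphi(y,\widetilde x;x,\widetilde y)$'' as a point at which the holomorphic gradient $(\partial_x\varphi,\partial_{\widetilde y}\varphi)$ vanishes; this is the notion relevant for the stationary phase computations to follow, and in any case the anti-holomorphic derivatives of $\varphi$ are ${\cal O}(\abs{(x,\widetilde y)-(x_0,\overline{x_0})}^{\infty})$ throughout the small neighborhood under consideration, by (\ref{eq2.3.2}), (\ref{eq2.3.3}). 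Since in (\ref{eq2.12}) only the first two terms depend on $x$ and only the first and third on $\widetilde y$, the critical point equations are
$$
(\partial_x\Psi)(x,\widetilde y)=(\partial_x\Psi)(x,\widetilde x),\qquad (\partial_y\Psi)(x,\widetilde y)=(\partial_y\Psi)(y,\widetilde y),
$$
with $\partial_x$, $\partial_y$ the derivatives with respect to the first and second arguments of $\Psi$, as in (\ref{eq2.5}). Plainly $(x,\widetilde y)=(y,\widetilde x)$ solves this system.

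To see it is the only solution nearby, I would apply the implicit function theorem to the smooth map $(x,\widetilde y)\mapsto(\partial_x\varphi,\partial_{\widetilde y}\varphi)\in\comp^{2n}$, with $(y,\widetilde x)$ regarded as parameters. Using (\ref{eq2.5}), (\ref{eq2.6}) and differentiating, one finds that at $(x,\widetilde y)=(y,\widetilde x)$ the holomorphic differential in $(x,\widetilde y)$ is the block matrix
$$
\begin{pmatrix}0 & \Psi''_{xy}(y,\widetilde x)\\[2pt] \Psi''_{xy}(y,\widetilde x) & 0\end{pmatrix},
$$
the diagonal blocks vanishing since, e.g., $\partial_x\big[(\partial_x\Psi)(x,\widetilde y)-(\partial_x\Psi)(x,\widetilde x)\big]=\Psi''_{xx}(x,\widetilde y)-\Psi''_{xx}(x,\widetilde x)$ is zero at $(x,\widetilde y)=(y,\widetilde x)$. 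For $(y,\widetilde x)$ close to $(x_0,\overline{x_0})$, $\Psi''_{xy}(y,\widetilde x)$ is close to $\Psi''_{xy}(x_0,\overline{x_0})=\Phi''_{x\overline{x}}(x_0)$, which is invertible by (\ref{eq2.1}); hence the holomorphic differential is a uniformly invertible complex-linear map. The anti-holomorphic part of the differential at that point is ${\cal O}(\abs{(y,\widetilde x)-(x_0,\overline{x_0})}^{\infty})$ by (\ref{eq2.3.2}), (\ref{eq2.3.3}), the relevant factors $\partial_{\overline{x}}\Psi$, $\partial_{\overline{y}}\Psi$ being evaluated at arguments of the form $(y,\widetilde x)$ with $\widetilde x$ within ${\cal O}(\abs{(y,\widetilde x)-(x_0,\overline{x_0})})$ of $\overline y$. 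So the full real differential is invertible on a small enough fixed neighborhood, and the implicit function theorem yields, for each $(y,\widetilde x)$ near $(x_0,\overline{x_0})$, a unique critical point of $\varphi(y,\widetilde x;\cdot)$ near $(x_0,\overline{x_0})$, which must then be $(x,\widetilde y)=(y,\widetilde x)$. One could equally argue by successive elimination: the first equation above forces $\widetilde y=\widetilde x$ because $\widetilde y\mapsto(\partial_x\Psi)(x,\widetilde y)$ is injective near $\overline{x_0}$ (its differential being close to the invertible $\Phi''_{x\overline{x}}(x_0)$), and then the second forces $x=y$ for the same reason.

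For the quadratic part, put $\widetilde x=\overline y$ and observe that
$$
\varphi(y,\overline y;y+z,\overline y+w)=\Psi(y+z,\overline y+w)-\Psi(y+z,\overline y)-\Psi(y,\overline y+w)+\Psi(y,\overline y)
$$
is a mixed second difference of $\Psi$ based at the anti-diagonal point $(y,\overline y)$. Expanding in Taylor series about $(y,\overline y)$ as in the proof of Proposition \ref{prop1}: in such a difference only monomials of positive degree in $z$ or $\overline z$ \emph{and} of positive degree in $w$ or $\overline w$ survive, so among degree-two monomials only $z_jw_k$, $z_j\overline{w}_k$, $\overline z_jw_k$, $\overline z_j\overline{w}_k$ can occur; by (\ref{eq2.3.2}), (\ref{eq2.3.3}) all second derivatives of $\Psi$ at $(y,\overline y)$ involving an anti-holomorphic derivative vanish, so the last three drop out. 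The quadratic part is therefore $\Psi''_{xy}(y,\overline y)\,w\cdot z=\Phi''_{x\overline{x}}(y)\,w\cdot z$ by (\ref{eq2.6}), which is holomorphic in $(z,w)$; and since $\Phi''_{x\overline{x}}(y)$ is invertible by (\ref{eq2.1}) (being positive definite Hermitian), the associated symmetric bilinear form on $\comp^{2n}$ --- which in the splitting $\comp^{2n}=\comp^n_z\times\comp^n_w$ has vanishing diagonal blocks and the invertible $\Phi''_{x\overline{x}}(y)$ as off-diagonal block --- is non-degenerate.

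The individual computations are all of the routine Taylor-expansion type already carried out in the proof of Proposition \ref{prop1}; the one point that calls for a little care is the uniqueness of the critical point, where one has to verify genuine invertibility of the differential of a smooth but non-holomorphic map --- which reduces to its being a controlled perturbation, uniformly on a small enough neighborhood of $(x_0,\overline{x_0})$, of the invertible complex-linear map built from the Levi form of $\Phi$.
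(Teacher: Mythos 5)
Your proof is correct and follows essentially the same route as the paper's: both rest on Taylor expansion about $(x,\widetilde{y})=(y,\widetilde{x})$ combined with the almost holomorphy of $\Psi$ near the anti-diagonal and the invertibility of $\Psi''_{xy}$. The only real difference is one of explicitness: where the paper reads off criticality, the critical value, and uniqueness from a single second-order expansion of $\varphi$ (uniqueness being implicit in the non-degeneracy of the quadratic part $\Psi''_{xy}(y,\widetilde{x})w\cdot z$ modulo small errors), you write out the critical-point equations from (\ref{eq2.12}) and justify uniqueness via the implicit function theorem for the gradient map, treating its anti-holomorphic part as a negligible perturbation --- a legitimate and slightly more detailed rendering of the same argument.
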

\begin{proof}
We shall consider the Taylor expansion of the $C^{\infty}$ function $(z,w)\mapsto \varphi(y,\widetilde{x};y+z,\widetilde{x}+w)$ at $(z,w) = (0,0)\in \comp^{2n}$. Here $(y,\widetilde{x}) \in {\rm neigh}((x_0,\overline{x_0}),\comp^{2n})$. Let us write, using Taylor's formula and the almost holomorphy of $\Psi$ along the anti-diagonal, as in (\ref{eq2.3}), (\ref{eq2.3.2}), (\ref{eq2.3.3}),
\begin{multline}
\label{eq2.17}
\Psi(y+z,\widetilde{x} + w) = \Psi(y,\widetilde{x}) + \Psi'_x(y,\widetilde{x})\cdot z + \Psi'_{\overline{x}}(y,\widetilde{x})\cdot \overline{z}
+ \Psi'_y(y,\widetilde{x})\cdot w + \Psi'_{\overline{y}}(y,\widetilde{x})\cdot \overline{w} \\
+ \frac{1}{2}\left(\Psi''_{xx}(y,\widetilde{x})z\cdot z + 2 \Psi''_{xy}(y,\widetilde{x})w\cdot z + \Psi''_{yy}(y,\widetilde{x})w\cdot w\right) \\
+ {\cal O}_N(\abs{\overline{y}-\widetilde{x}}^N)(\abs{(z,w)}^2) + {\cal O}(\abs{(z,w)}^3),
\end{multline}
\begin{multline}
\label{eq2.18}
\Psi(y,\widetilde{x} + w) = \Psi(y,\widetilde{x}) + \Psi'_y(y,\widetilde{x})\cdot w + \Psi'_{\overline{y}}(y,\widetilde{x})\cdot \overline{w} \\
+ \frac{1}{2} \Psi''_{yy}(y,\widetilde{x})w\cdot w + {\cal O}_N(\abs{\overline{y}-\widetilde{x}}^N)\abs{w}^2 + {\cal O}(\abs{w}^3).
\end{multline}
\begin{multline}
\label{eq2.19}
\Psi(y+z,\widetilde{x}) = \Psi(y,\widetilde{x}) + \Psi'_x(y,\widetilde{x})\cdot z + \Psi'_{\overline{x}}(y,\widetilde{x})\cdot \overline{z} \\
+ \frac{1}{2} \Psi''_{xx}(y,\widetilde{x})z\cdot z + {\cal O}_N(\abs{\overline{y}-\widetilde{x}}^N)\abs{z}^2 + {\cal O}(\abs{z}^3).
\end{multline}
Here $N\in \nat$ is arbitrary. We get, using (\ref{eq2.12}), (\ref{eq2.17}), (\ref{eq2.18}), (\ref{eq2.19}),
\begeq
\label{eq2.20}
\varphi(y,\widetilde{x};y+z,\widetilde{x}+w) = \Psi''_{xy}(y,\widetilde{x})w\cdot z + {\cal O}_N(\abs{\overline{y}-\widetilde{x}}^N)(\abs{(z,w)}^2) + {\cal O}(\abs{(z,w)}^3).
\endeq
This shows in particular that $(z,w) = (0,0)$ is a critical point of $(z,w)\mapsto \varphi(y,\widetilde{x};y+z,\widetilde{x}+w)$, with the corresponding critical value being equal to $0$. We notice also that the matrix $\Psi''_{xy}(y,\widetilde{x})$ is invertible for $(y,\widetilde{x})\in {\rm neigh}((x_0,\overline{x_0}),\comp^{2n})$. Restricting $(y,\widetilde{x})$ in (\ref{eq2.20}) to the anti-diagonal in $\comp^{2n}$, i.e. letting $\widetilde{x} = \overline{y}$, and using (\ref{eq2.6}), we get,
\begeq
\label{eq2.21}
\varphi(y,\overline{y};y+z,\overline{y}+w) = \Phi''_{x\overline{x}}(y) w\cdot z + {\cal O}(\abs{(z,w)}^3).
\endeq
Here, in view of (\ref{eq2.1}), the holomorphic quadratic form $(z,w)\mapsto \Phi''_{x\overline{x}}(y) w\cdot z$ is non-degenerate on $\comp^{2n}_{z,w}$, for $y\in {\rm neigh}(x_0,\comp^n)$, and this completes the proof.
\end{proof}

\bigskip
\noindent
It follows from the last observation in the proof of Proposition \ref{prop2} that the pluriharmonic quadratic form $q(z,w) := {\rm Re}\, \left(\Phi''_{x\overline{x}}(x_0) w\cdot z\right)$ is non-degenerate on $\comp^{2n}_{z,w}$, and hence necessarily of signature $(2n,2n)$. Explicitly, we have
\begeq
\label{eq2.22}
q(z,\overline{z}) = {\rm Re}\, \left(\Phi''_{x\overline{x}}(x_0) \overline{z} \cdot z\right) \asymp \abs{z}^2, \quad z\in \comp^n,
\endeq
and therefore
\begeq
\label{eq2.23}
q(iz,i\overline{z}) = - q(z,\overline{z}) \asymp -\abs{z}^2,\quad z\in \comp^n.
\endeq
Using the continuity of $\Psi''_{xy}(y,\widetilde{x})$ we conclude that for all $(y,\widetilde{x})\in \comp^{2n}$ sufficiently close to $(x_0,\overline{x_0})$, we have
\begeq
\label{eq2.24}
{\rm Re}\, \left(\Psi''_{xy}(y,\widetilde{x})\overline{z} \cdot z\right) \asymp \abs{z}^2, \quad z\in \comp^n,
\endeq
\begeq
\label{eq2.25}
{\rm Re}\, \left(\Psi''_{xy}(y,\widetilde{x})i\overline{z} \cdot iz\right) \asymp -\abs{z}^2, \quad z\in \comp^n.
\endeq
Combining (\ref{eq2.20}) with (\ref{eq2.24}), (\ref{eq2.25}), we get for all $(y,\widetilde{x}) \in {\rm neigh}((x_0,\overline{x_0}),\comp^{2n})$,
\begin{multline}
\label{eq2.26}
{\rm Re}\, \varphi(y,\widetilde{x};y+z,\widetilde{x}+\overline{z}) = {\rm Re}\, \left(\Psi''_{xy}(y,\widetilde{x})\overline{z}\cdot z\right) + {\cal O}_N(\abs{\overline{y}-\widetilde{x}}^N)\abs{z}^2 + {\cal O}(\abs{z}^3) \\
\geq \frac{1}{C}\abs{z}^2,\quad z\in {\rm neigh}(0,\comp^n),
\end{multline}
\begin{multline}
\label{eq2.27}
{\rm Re}\, \varphi(y,\widetilde{x};y+iz,\widetilde{x}+i\overline{z}) = {\rm Re}\, \left(\Psi''_{xy}(y,\widetilde{x})i\overline{z}\cdot iz\right) + {\cal O}_N(\abs{\overline{y}-\widetilde{x}}^N)\abs{z}^2 + {\cal O}(\abs{z}^3) \\
\leq -\frac{1}{C}\abs{z}^2,\quad z\in {\rm neigh}(0,\comp^n),
\end{multline}
It follows from (\ref{eq2.20}), (\ref{eq2.26}), (\ref{eq2.27}) that for each $(y,\widetilde{x}) \in {\rm neigh}((x_0,\overline{x_0}),\comp^{2n})$, the critical point $(x,\widetilde{y}) = (y,\widetilde{x})$ of the real-valued $C^{\infty}$ function ${\rm neigh}((x_0,\overline{x_0}),\comp^{2n})\ni (x,\widetilde{y}) \mapsto {\rm Re}\, \varphi(y,\widetilde{x};x,\widetilde{y})$ is non-degenerate of signature $(2n,2n)$.

\bigskip
\noindent
Let $\Gamma(y,\widetilde{x}) \subset \comp^{2n}_{x,\widetilde{y}}$ be a smooth $2n$-dimensional contour of integration passing through the critical point $(x,\widetilde{y}) = (y,\widetilde{x})$ and depending smoothly on
$(y,\widetilde{x})\in {\rm neigh}((x_0,\overline{x_0}),\comp^{2n})$, such that along $\Gamma(y,\widetilde{x})$, we have
\begeq
\label{eq2.28}
{\rm Re}\, \varphi(y,\widetilde{x};x,\widetilde{y}) \leq -\frac{1}{C} {\rm dist}\left((x,\widetilde{y}),(y,\widetilde{x})\right)^2.
\endeq
Following~\cite[Chapter 3]{Sj82}, we shall say that $\Gamma(y,\widetilde{x})$ is a good contour for the $C^{\infty}$ real-valued function $(x,\widetilde{y}) \mapsto {\rm Re}\, \varphi(y,\widetilde{x};x,\widetilde{y})$. In particular, it follows from (\ref{eq2.27}) that the $2n$-dimensional affine contour
\begeq
\label{eq2.29}
\Gamma(y,\widetilde{x}): {\rm neigh}(0,\comp^n) \ni z \mapsto (y+z,\widetilde{x}-\overline{z}) \in \comp^{2n}_{x,\widetilde{y}}
\endeq
is good.

\bigskip
\noindent
Proceeding similarly to~\cite[Section 3]{DelHiSj}, we shall now introduce a suitable Fourier integral operator in the complex domain, with the function $\varphi$ in (\ref{eq2.12}) playing the role of the phase function, with no fiber variables present. To this end, let us first specify a suitable class of amplitudes. Let $a\in S^0_{{\rm cl}}({\rm neigh}((x_0,\overline{x_0}),\comp^{2n}))$,
\begeq
\label{eq2.29.1}
a(x,\widetilde{y};h) \sim \sum_{j=0}^{\infty} a_j(x,\widetilde{y}) h^j, \quad h\rightarrow 0^+,
\endeq
be a classical $C^{\infty}$ symbol, with the asymptotic expansion (\ref{eq2.29.1}) in $C^{\infty}({\rm neigh}((x_0,\overline{x_0}),\comp^{2n}))$, such that $a_j \in C^{\infty}({\rm neigh}((x_0,\overline{x_0}),\comp^{2n}))$ satisfy
\begeq
\label{eq2.30}
\left(\partial_{\overline{x}}a_j\right)(x,\widetilde{y}) = {\cal O}(\abs{\widetilde{y}-\overline{x}}^{\infty}),\quad \left(\partial_{\overline{\widetilde{y}}}a_j\right)(x,\widetilde{y}) =
{\cal O}(\abs{\widetilde{y}-\overline{x}}^{\infty}),\quad j=0,1,2,\ldots.
\endeq

\medskip
\noindent
Given a good contour $\Gamma(y,\widetilde{x}) \subset \comp^{2n}_{x,\widetilde{y}}$ for the $C^{\infty}$ function $(x,\widetilde{y}) \mapsto {\rm Re}\, \varphi(y,\widetilde{x};x,\widetilde{y})$ and an amplitude $a(x,\widetilde{y};h)$ satisfying (\ref{eq2.29.1}), (\ref{eq2.30}), we set
\begeq
\label{eq2.31}
(A_{\Gamma} a)(y,\widetilde{x};h) = \frac{1}{h^n}\int\!\!\!\int_{\Gamma(y,\widetilde{x})} e^{\frac{2}{h} \varphi(y,\widetilde{x};x,\widetilde{y})} a(x,\widetilde{y};h)\, dx\, d\widetilde{y}.
\endeq
We have $A_{\Gamma}a \in C^{\infty}({\rm neigh}((x_0,\overline{x_0}),\comp^{2n})$, and let us first check that the definition of $A_{\Gamma}a$ is essentially independent of the choice of a good contour, up to a rapidly vanishing error as $h\rightarrow 0^+$, provided that $(y,\widetilde{x})$ is confined to the anti-diagonal.

\begin{prop}
\label{prop_contour}
There exists an open neighborhood $V_0 \Subset \Omega \subset \comp^n$ of $x_0$ such that for any two good contours $\Gamma(y,\overline{y})$, $\Gamma_0(y,\overline{y})$ for the function $(x,\widetilde{y}) \mapsto {\rm Re}\, \varphi(y,\overline{y};x,\widetilde{y})$, for $y\in V_0$, and any amplitude $a$ satisfying {\rm (\ref{eq2.29.1})}, {\rm (\ref{eq2.30})}, we have for $y\in V_0$,
\begeq
\label{eq2.32}
(A_{\Gamma}a)(y,\overline{y};h) - (A_{\Gamma_0}a)(y,\overline{y};h) = {\cal O}(h^{\infty}),
\endeq
in the $C^{\infty}(V_0)$ sense.
\end{prop}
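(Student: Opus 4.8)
\medskip
\noindent
{\it Proof proposal.} Fix $y$ in a small neighborhood $V_0$ of $x_0$, write $d := {\rm dist}\,((x,\widetilde{y}),(y,\overline{y}))$ for $(x,\widetilde{y})\in \comp^{2n}$, and abbreviate $\varphi = \varphi(y,\overline{y};x,\widetilde{y})$. The plan is to run a Stokes' formula argument: express the left-hand side of (\ref{eq2.32}) as the integral, over a homotopy region connecting $\Gamma$ and $\Gamma_0$, of $\overline{\partial}_{x,\widetilde{y}}$ applied to the integrand, and then exploit that on the anti-diagonal this $\overline{\partial}$ vanishes to infinite order at the critical point $(x,\widetilde{y}) = (y,\overline{y})$. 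To begin, I would use the good contour bound (\ref{eq2.28}) for $\Gamma = \Gamma(y,\overline{y})$ and $\Gamma_0 = \Gamma_0(y,\overline{y})$, together with $a = {\cal O}(1)$, to see that the parts of $(A_{\Gamma}a)(y,\overline{y};h)$ and $(A_{\Gamma_0}a)(y,\overline{y};h)$ coming from $\{d \geq \delta\}$ are each ${\cal O}(e^{-1/(Ch)})$, for a small fixed $\delta > 0$. Inserting a cutoff $\chi = \chi(x,\widetilde{y})$, equal to $1$ on $\{d \leq \delta/2\}$ and supported in $\{d < \delta\}$, it then suffices to estimate the difference of the two integrals of the $(2n,0)$--form $\alpha_h := \chi\, e^{\frac{2}{h}\varphi}\, a(x,\widetilde{y};h)\, dx\wedge d\widetilde{y}$.

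Next I would connect $\Gamma_0$ and $\Gamma$ by a smooth family $(\Gamma_t)_{t\in [0,1]}$ of good contours for $(x,\widetilde{y})\mapsto {\rm Re}\,\varphi$, with a constant $C$ in (\ref{eq2.28}) that is uniform in $t$. After shrinking the neighborhood, every good contour through $(y,\overline{y})$ is, as in (\ref{eq2.29}), a graph $z \mapsto (y+z, \overline{y}-\overline{z}+g(z))$ over $z\in {\rm neigh}(0,\comp^n)$ with $g(0) = 0$: its tangent at $(y,\overline{y})$ must be a maximal negative subspace for the non-degenerate quadratic form $q$ from (\ref{eq2.22}) (this is where the signature $(2n,2n)$ recorded after Proposition \ref{prop2} enters), and such a subspace is transverse to $\{z = 0\}$, on which $q$ vanishes identically. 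Linear interpolation $g_t = (1-t) g_0 + t g_1$ of the defining functions produces the family $(\Gamma_t)$; the good contour property is preserved because, modulo cubic terms that are absorbed upon shrinking, it reduces to negative-definiteness of a quadratic form depending affinely on $t$, and negative-definite forms make up a convex set. Setting $W = \bigcup_{t\in [0,1]}\Gamma_t$, an oriented $(2n+1)$--chain whose boundary is $\Gamma - \Gamma_0$ together with a piece contained in $\{\chi = 0\}$, and using that $d\alpha_h = \overline{\partial}_{x,\widetilde{y}}\alpha_h$ since $dx\wedge d\widetilde{y}$ is a holomorphic $(2n,0)$--form, Stokes' formula gives
$$
(A_{\Gamma}a)(y,\overline{y};h) - (A_{\Gamma_0}a)(y,\overline{y};h) = \frac{1}{h^n}\int_{W} \overline{\partial}_{x,\widetilde{y}}\alpha_h + {\cal O}(e^{-1/(Ch)}),
$$
where the error absorbs both the $\{d\geq\delta\}$ tails and the $d\chi$ term, each controlled by (\ref{eq2.28}).

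The key step is then the pointwise bound on $\overline{\partial}_{x,\widetilde{y}}\alpha_h$ along $W$. From (\ref{eq2.12}) with $\widetilde{x} = \overline{y}$ one has $\partial_{\overline{x}_j}\varphi = (\partial_{\overline{x}_j}\Psi)(x,\widetilde{y}) - (\partial_{\overline{x}_j}\Psi)(x,\overline{y})$ and $\partial_{\overline{\widetilde{y}}_j}\varphi = (\partial_{\overline{y}_j}\Psi)(x,\widetilde{y}) - (\partial_{\overline{y}_j}\Psi)(y,\widetilde{y})$, and by (\ref{eq2.3}) each term here is ${\cal O}_N(r^N)$ for every $N$, where $r$ is one of $|\widetilde{y}-\overline{x}|$, $|\overline{y}-\overline{x}| = |x-y|$, $|\widetilde{y}-\overline{y}|$; on $\supp \chi$ all three of these are $\leq 2d$, so $|\overline{\partial}_{x,\widetilde{y}}\varphi| \leq C_N d^N$ for every $N$. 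Likewise $|\overline{\partial}_{x,\widetilde{y}} a| \leq C_N d^N$ locally uniformly in $h$, by (\ref{eq2.30}) applied to a realization of the symbol $a$ (the scale of the resummation being chosen so that the bound is $h$--independent). Hence $|\overline{\partial}_{x,\widetilde{y}}\alpha_h| \leq C_N h^{-1} d^N e^{\frac{2}{h}{\rm Re}\,\varphi}$, and along $W$ the uniform good contour bound gives ${\rm Re}\,\varphi \leq -d^2/C$, so $|\overline{\partial}_{x,\widetilde{y}}\alpha_h| \leq C_N h^{-1} d^N e^{-2d^2/(Ch)}$. Since $W$ is $(2n+1)$--dimensional with $d \asymp |z|$ in the parametrization by $(t,z)$, the rescaling $z = \sqrt{h}\,\zeta$ turns $\frac{1}{h^n}\int_W |\overline{\partial}_{x,\widetilde{y}}\alpha_h|$ into ${\cal O}(h^{N/2 - 1})$ for every $N$, hence ${\cal O}(h^{\infty})$.

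Finally, all constants above are locally uniform in $y\in V_0$, and $\varphi$, the realization $a$, the cutoff $\chi$ and the family $(\Gamma_t)$ can be taken to depend smoothly on $y$; differentiating under the integral sign and repeating the estimates then yields (\ref{eq2.32}) in the $C^{\infty}(V_0)$ sense. I expect the main obstacle to be the middle step: producing the homotopy through good contours with a $t$--uniform constant in (\ref{eq2.28}) and with a clean enough description of $\partial W$ for Stokes' formula to apply legitimately. Everything else is a variant of the non-stationary phase bookkeeping already carried out in Propositions \ref{prop1} and \ref{prop2}.
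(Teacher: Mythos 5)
Your proposal is correct and, in its core mechanism, coincides with the paper's proof: both arguments connect $\Gamma$ and $\Gamma_0$ by a homotopy through good contours, apply Stokes' formula to the $(2n,0)$--form $e^{\frac{2}{h}\varphi}a\,dx\wedge d\widetilde{y}$ over the resulting $(2n+1)$--dimensional chain, and then kill the bulk term by combining the infinite--order vanishing of $\overline{\partial}_{x,\widetilde{y}}\varphi$ and $\overline{\partial}_{x,\widetilde{y}}a$ at $(x,\widetilde{y})=(y,\overline{y})$ (from (\ref{eq2.3}) and (\ref{eq2.30})) with the Gaussian decay $e^{-d^2/Ch}$ furnished by the good--contour bound (\ref{eq2.28}); the boundary piece joining the two contours away from the critical point is exponentially small. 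Two ingredients are handled differently, and both of your variants work. First, the homotopy: the paper invokes the Morse lemma with parameters to reduce ${\rm Re}\,\varphi$ to the normal form $\frac{1}{2}(t^2-s^2)$ as in (\ref{eq2.33}), represents each good contour as a graph $t=g(s)$ with $\abs{g(s)}\leq\alpha\abs{s}$, and interpolates linearly in those coordinates, obtaining uniformly good intermediate contours (\ref{eq2.42}); you instead observe that the tangent space of any good contour is a maximal negative subspace for $q(z,w)={\rm Re}\,(\Phi''_{x\overline{x}}w\cdot z)$, hence transverse to the isotropic plane $\{z=0\}$, so that good contours are graphs over the $x$--variable, and you exploit the linearity of $q(z,\cdot)$ to make $t\mapsto q(z,-\overline{z}+g_t(z))$ affine, whence the intermediate graphs are uniformly good. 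Your construction is more hands-on and uses the special bilinear structure of this particular phase; the paper's is the general-purpose Sj\"ostrand machinery valid for any nondegenerate critical point of signature $(N,N)$. Second, the upgrade to the $C^{\infty}(V_0)$ sense: you differentiate under the integral sign and redo the estimates (each $y$--derivative costs at most a negative power of $h$, harmless against ${\cal O}(h^N)$ for all $N$), whereas the paper avoids this bookkeeping entirely by combining the sup--norm bound (\ref{eq2.56}) with crude polynomial bounds (\ref{eq2.57}) on derivatives and the convexity (Landau--Kolmogorov) inequalities; the latter is cleaner, yours is more explicit.

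One small inaccuracy that does not affect the outcome: for a \emph{given} realization $a$ of the asymptotic sum (\ref{eq2.29.1}) --- and the proposition is stated for an arbitrary such $a$, so you are not free to re-resum it --- one only gets $\abs{\overline{\partial}_{x,\widetilde{y}}a}\leq {\cal O}_N(1)(d^N+h^N)$ as in (\ref{eq2.47}), not ${\cal O}_N(d^N)$ uniformly in $h$; the extra $h^N$ is of course harmless in the final integration.
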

\begin{proof}
Let us start by making some general remarks concerning good contours when parameters are present, closely related to the discussion in~\cite[Chapter 3]{Sj82},~\cite[Chapter 1]{Delort}. Let $f(x,y)$, $x\in \real^n$, $y\in \real^{2N}$, be a real-valued $C^{\infty}$ function in a neighborhood of $(0,0)\in \real^{n}_x \times \real^{2N}_y$. Assume that $f'_y(0,0) = 0$ and that $f''_{yy}(0,0)$ is non-degenerate of signature $(N,N)$. The implicit function theorem gives that the equation $f'_y(x,y) = 0$ uniquely defines a $C^{\infty}$ function $y = y(x)$ in a neighborhood of $0$, with $y(0) = 0$, and applying the Morse lemma with parameters~\cite[Appendix C]{H_book}, we obtain that there exist new $C^{\infty}$ coordinates $z = z(y) = z(x,y)$ for $\real^{2N}_y$ near $y=0$ when $x\in \real^n$ is small, such that writing $z = (t,s)$ with $t,s\in \real^{N}$, we have
\begeq
\label{eq2.33}
f(x,y) = f(x,y(x)) + \frac{1}{2}(t^2 - s^2).
\endeq
We may also recall from~\cite[Appendix C]{H_book} that the new coordinates, which also depend on $x$, are centered at the critical point $y(x)$ and are of the form
\begeq
\label{eq2.34}
z = Q(x,y)(y-y(x)),
\endeq
where the matrix $Q(0,0)$ is invertible. Assume next that $\Gamma(x)\subset \real^{2N}_y$ is a good contour for $y\mapsto f(x,y)$, depending smoothly on $x$, so that $\Gamma(x)$ passes through $y(x)$ and along $\Gamma(x)$, we have for all $x\in \real^{n}$ small enough,
\begeq
\label{eq2.35}
f(x,y)\leq f(x,y(x)) - \frac{1}{C}\abs{y - y(x)}^2,\quad y\in \Gamma(x),
\endeq
for some $C>0$. It follows from (\ref{eq2.33}), (\ref{eq2.34}), and (\ref{eq2.35}) that along $\Gamma(x)$, we have
\begeq
\label{eq2.36}
\frac{1}{2}(t^2 - s^2) \leq - \frac{1}{{\cal O}(1)}\left(t^2 + s^2\right),
\endeq
and by the implicit function theorem, we obtain therefore that in the Morse coordinates $z = (t,s)$, the contour $\Gamma(x)$ takes the form
\begeq
\label{eq2.37}
t = g(s,x),\quad s\in {\rm neigh}(0,\real^N),
\endeq
where
\begeq
\label{eq2.37.1}
\abs{g(s,x)} \leq \alpha \abs{s}, \quad \alpha < 1.
\endeq
See also~\cite[Chapter 3]{Sj82}. Throughout the discussion above, $x\in \real^n$ varies in a sufficiently small neighborhood of the origin. Letting $z \mapsto y(z)$ be the inverse of the map $y \mapsto z(y)$, well defined for $x$ small, we obtain the following parametrization of the good contour $\Gamma(x)$,
\begeq
\label{eq2.38}
{\rm neigh}(0,\real^N) \ni s \mapsto y(g(s,x),s) \in \real^{2N}_y.
\endeq
A consequence of this discussion is that if $\Gamma_0(x) \subset \real^{2N}_y$ is another good contour for $y\mapsto f(x,y)$, depending smoothly on $x$, then its representation in the Morse coordinates $z = (t,s)$ takes the form
\begeq
\label{eq2.39}
t = g_0(s,x), \quad s\in {\rm neigh}(0,\real^N),
\endeq
\begeq
\label{eq2.40}
\abs{g_0(s,x)} \leq \alpha \abs{s}, \quad \alpha < 1,
\endeq
and the contours $\Gamma(x)$, $\Gamma_0(x)$ are therefore homotopic via the deformation
\begeq
\label{eq2.41}
{\rm neigh}(0,\real^N)\times [0,1] \ni (s,\theta) \mapsto y(\theta g(s,x) + (1-\theta)g_0(s,x),s),
\endeq
well defined for all $x$ small enough, with the $C^{\infty}$ dependence on $x$. It follows also from (\ref{eq2.33}), (\ref{eq2.37}), (\ref{eq2.37.1}), (\ref{eq2.39}), (\ref{eq2.40}) that when $\theta\in [0,1]$, the contour
\begeq
\label{eq2.42}
\Gamma_{\theta}(x): {\rm neigh}(0,\real^N) \ni s \mapsto y(\theta g(s,x) + (1-\theta)g_0(s,x),s),
\endeq
is good for $y\mapsto f(x,y)$, uniformly in $\theta\in [0,1]$ (and $x\in \real^n$ small enough).

\medskip
\noindent
We shall now apply the discussion above to the $C^{\infty}$ function
\begeq
\label{eq2.43}
{\rm neigh}((x_0,\overline{x_0}),\comp^{2n}) \ni (x,\widetilde{y}) \mapsto {\rm Re}\, \varphi(y,\overline{y};x,\widetilde{y}),
\endeq
with $y\in {\rm neigh}(x_0,\comp^n)$ playing the role of the parameters. Let $\Gamma = \Gamma(y)$, $\Gamma_0 = \Gamma_0(y)$ be two good contours for the function in (\ref{eq2.43}), and let $\Gamma_{\theta}(y) \subset \comp^{2n}_{x,\widetilde{y}}$ be the "intermediate" contour defined as in (\ref{eq2.42}), for $\theta\in [0,1]$, where $N = 2n$. Letting $G_{[0,1]}(y)\subset \comp^{2n}_{x,\widetilde{y}}$ be the $(2n+1)$--dimensional contour formed by the union of the contours $\Gamma_{\theta}(y)$, for $\theta \in [0,1]$, parametrized as in (\ref{eq2.41}), we may write by an application of Stokes' formula to the $(2n,0)$--differential form $\omega = \displaystyle e^{\frac{2}{h} \varphi(y,\overline{y};x,\widetilde{y})} a(x,\widetilde{y};h)\, dx\wedge d\widetilde{y}$,
\begin{multline}
\label{eq2.44}
\int\!\!\!\int_{\partial G_{[0,1]}(y)} \omega = \int\!\!\!\int\!\!\!\int_{G_{[0,1]}(y)} d\omega =
\int\!\!\!\int\!\!\!\int_{G_{[0,1]}(y)} d\left(e^{\frac{2}{h} \varphi(y,\overline{y};x,\widetilde{y})} a(x,\widetilde{y};h)\right)\wedge dx\wedge d\widetilde{y} \\
= \int\!\!\!\int\!\!\!\int_{G_{[0,1]}(y)} \partial_{\overline{x}} \left(e^{\frac{2}{h} \varphi(y,\overline{y};x,\widetilde{y})} a(x,\widetilde{y};h)\right)\wedge dx\wedge d\widetilde{y} \\
+ \int\!\!\!\int\!\!\!\int_{G_{[0,1]}(y)} \partial_{\overline{\widetilde{y}}} \left(e^{\frac{2}{h} \varphi(y,\overline{y};x,\widetilde{y})} a(x,\widetilde{y};h)\right)\wedge dx\wedge d\widetilde{y}.
\end{multline}
Using (\ref{eq2.12}) we compute, for $1\leq j \leq n$,
\begin{multline}
\label{eq2.45}
\partial_{\overline{x}_j} \left(e^{\frac{2}{h} \varphi(y,\overline{y};x,\widetilde{y})} a(x,\widetilde{y};h)\right) =
e^{\frac{2}{h} \varphi(y,\overline{y};x,\widetilde{y})}\left(\frac{2}{h} \partial_{\overline{x}_j} \varphi(y,\overline{y};x,\widetilde{y}) a(x,\widetilde{y};h) + \partial_{\overline{x}_j} a(x,\widetilde{y};h)\right)\\
= e^{\frac{2}{h} \varphi(y,\overline{y};x,\widetilde{y})} \left(\frac{2}{h}\partial_{\overline{x}_j}\left(\Psi(x,\widetilde{y}) - \Psi(x,\overline{y})\right)a(x,\widetilde{y};h) + \partial_{\overline{x}_j} a(x,\widetilde{y};h)\right),
\end{multline}
and here we have in view of (\ref{eq2.3}), for all $N$,
\begin{multline}
\label{eq2.46}
\abs{\partial_{\overline{x}_j}\left(\Psi(x,\widetilde{y}) - \Psi(x,\overline{y})\right)} \leq {\cal O}_N(1) \left(\abs{\overline{x} - \widetilde{y}}^N + \abs{x-y}^N\right)\\
\leq {\cal O}_N(1) \left(\abs{\overline{x} - \overline{y}}^N + \abs{\overline{y} - \widetilde{y}}^N + \abs{x-y}^N\right) \leq
{\cal O}_N(1) \left(\abs{\overline{y} - \widetilde{y}}^N + \abs{x-y}^N\right) \\
\leq {\cal O}_N(1) {\rm dist}\, ((x,\widetilde{y}),(y,\overline{y}))^N.
\end{multline}
Furthermore, using (\ref{eq2.29.1}) and (\ref{eq2.30}) we get
\begeq
\label{eq2.47}
\abs{\partial_{\overline{x}_j} a(x,\widetilde{y};h)} \leq {\cal O}_N(1)\left(\abs{\overline{x} - \widetilde{y}}^N + h^N\right) \leq
{\cal O}_N(1) \left({\rm dist}\, ((x,\widetilde{y}),(y,\overline{y}))^N + h^N\right).
\endeq
We get therefore using (\ref{eq2.28}), (\ref{eq2.45}), (\ref{eq2.46}), (\ref{eq2.47}), uniformly along $G_{[0,1]}(y)$,
\begin{multline}
\label{eq2.48}
\abs{\partial_{\overline{x}_j} \left(e^{\frac{2}{h} \varphi(y,\overline{y};x,\widetilde{y})} a(x,\widetilde{y};h)\right)} \leq
{\cal O}_N(1) e^{\frac{2}{h} {\rm Re}\, \varphi(y,\overline{y};x,\widetilde{y})}\left(\frac{1}{h}{\rm dist}\, ((x,\widetilde{y}),(y,\overline{y}))^N + h^N\right)\\
{\cal O}_N(1) e^{-\frac{1}{Ch}{\rm dist}\, ((x,\widetilde{y}),(y,\overline{y}))^2}\left(\frac{1}{h}{\rm dist}\, ((x,\widetilde{y}),(y,\overline{y}))^N + h^N\right).
\end{multline}
Using that
$$
e^{-t^2/Ch} t^N \leq {\cal O}_N(1) h^{N/2},\quad t\geq 0,\,\,N =1,2,\ldots,
$$
we conclude that we have uniformly along $G_{[0,1]}(y)$, for all $N$,
\begeq
\label{eq2.49}
\abs{\partial_{\overline{x}} \left(e^{\frac{2}{h} \varphi(y,\overline{y};x,\widetilde{y})} a(x,\widetilde{y};h)\right)} \leq{\cal O}_N(1) h^N.
\endeq
This bound is uniform in $y\in {\rm neigh}(x_0,\comp^n)$. Next, when considering
\begin{multline}
\label{eq2.50}
\partial_{\overline{\widetilde{y}}} \left(e^{\frac{2}{h} \varphi(y,\overline{y};x,\widetilde{y})} a(x,\widetilde{y};h)\right) =
e^{\frac{2}{h} \varphi(y,\overline{y};x,\widetilde{y})}\left(\frac{2}{h} \partial_{\overline{\widetilde{y}}} \varphi(y,\overline{y};x,\widetilde{y}) a(x,\widetilde{y};h) + \partial_{\overline{\widetilde{y}}} a(x,\widetilde{y};h)\right)\\
= e^{\frac{2}{h} \varphi(y,\overline{y};x,\widetilde{y})} \left(\frac{2}{h}\partial_{\overline{\widetilde{y}}}\left(\Psi(x,\widetilde{y}) - \Psi(y,\widetilde{y})\right)a(x,\widetilde{y};h) + \partial_{\overline{\widetilde{y}}} a(x,\widetilde{y};h)\right),
\end{multline}
we write similarly to (\ref{eq2.46}), for $N=1,2,\ldots$,
\begin{multline}
\label{eq2.51}
\abs{\partial_{\overline{\widetilde{y}}}\left(\Psi(x,\widetilde{y}) - \Psi(y,\widetilde{y})\right)} \leq {\cal O}_N(1) \left(\abs{\overline{x} - \widetilde{y}}^N + \abs{\overline{y} - \widetilde{y}}^N\right)\\
\leq {\cal O}_N(1) \left(\abs{\overline{x} - \overline{y}}^N + \abs{\overline{y} - \widetilde{y}}^N +  \abs{\overline{y} - \widetilde{y}}^N \right) \leq
{\cal O}_N(1) \left(\abs{x-y}^N + \abs{\overline{y} - \widetilde{y}}^N\right) \\
\leq {\cal O}_N(1) {\rm dist}\, ((x,\widetilde{y}),(y,\overline{y}))^N.
\end{multline}
Using also (\ref{eq2.29.1}), (\ref{eq2.30}), we conclude that
\begin{multline}
\label{eq2.52}
\abs{\partial_{\overline{\widetilde{y}}} \left(e^{\frac{2}{h} \varphi(y,\overline{y};x,\widetilde{y})} a(x,\widetilde{y};h)\right)} \leq
\\
{\cal O}_N(1) e^{-\frac{1}{Ch}{\rm dist}\, ((x,\widetilde{y}),(y,\overline{y}))^2}\left(\frac{1}{h}{\rm dist}\, ((x,\widetilde{y}),(y,\overline{y}))^N + h^N\right),
\end{multline}
and therefore, similar to (\ref{eq2.49}), we get uniformly along $G_{[0,1]}(y)$, for all $N$,
\begeq
\label{eq2.53}
\abs{\partial_{\overline{\widetilde{y}}} \left(e^{\frac{2}{h} \varphi(y,\overline{y};x,\widetilde{y})} a(x,\widetilde{y};h)\right)} \leq{\cal O}_N(1) h^N.
\endeq
We get, combining (\ref{eq2.44}), (\ref{eq2.49}), and (\ref{eq2.53}),
\begeq
\label{eq2.54}
\int\!\!\!\int_{\partial G_{[0,1]}(y)} e^{\frac{2}{h} \varphi(y,\overline{y};x,\widetilde{y})} a(x,\widetilde{y};h)\, dx\wedge d\widetilde{y} = {\cal O}(h^{\infty}),
\endeq
uniformly for $y\in {\rm neigh}(x_0,\comp^n)$. Here we may write, with a suitable orientation,
$$
\partial G_{[0,1]}(y) = \Gamma(y) - \Gamma_0(y) + \Gamma_1(y),
$$
where
\begeq
\label{eq2.55}
{\rm Re}\, \varphi(y,\overline{y};x,\widetilde{y}) \leq -\frac{1}{C},\quad (x,\widetilde{y})\in \Gamma_1(y),
\endeq
for some $C>0$, when $y\in {\rm neigh}(x_0,\comp^n)$. It follows that
\begeq
\label{eq2.56}
(A_{\Gamma}a)(y,\overline{y};h) - (A_{\Gamma_0}a)(y,\overline{y};h) = {\cal O}(h^{\infty}),
\endeq
uniformly for $y\in {\rm neigh}(x_0,\comp^n)$. Let us see, finally, that the relation (\ref{eq2.56}) holds in the $C^{\infty}$ sense, i.e. also for the derivatives of $A_{\Gamma}a - A_{\Gamma_0}a$. To this end, we observe first that for all $\alpha,\beta \in \nat^n$ there exists $M_{\alpha\beta}\geq 0$ such that
\begeq
\label{eq2.57}
\partial^{\alpha}_y \partial^{\beta}_{\overline{y}}\left(A_{\Gamma}a - (A_{\Gamma_0}a)\right)(y,\overline{y};h) = {\cal O}_{\alpha}(1) h^{-M_{\alpha\beta}}.
\endeq
Combining (\ref{eq2.56}), (\ref{eq2.57}) with the convexity estimates for the derivatives of a smooth function~\cite[Chapter 1]{GrSj}, we conclude that
\begeq
\label{eq2.58}
\partial^{\alpha}_y\partial^{\beta}_{\overline{y}}\left(A_{\Gamma}a - (A_{\Gamma_0}a)\right)(y,\overline{y};h) = {\cal O}(h^{\infty}),
\endeq
uniformly, after an arbitrarily small decrease of the neighborhood of $x_0\in \comp^n$ where (\ref{eq2.56}) holds. The proof is complete.
\end{proof}

\bigskip
\noindent
We shall next proceed to establish the existence of a complete asymptotic expansion for $(A_{\Gamma}a)(y,\overline{y};h)$, as $h\rightarrow 0^+$. When doing so, thanks to Proposition \ref{prop_contour}, it will be convenient to work with the particular choice of the good contour $\Gamma(y,\overline{y})$ given in (\ref{eq2.29}). Using the parametrization of $\Gamma(y,\overline{y})$ given in (\ref{eq2.29}) and (\ref{eq2.31}), we get
\begeq
\label{eq2.59}
A_{\Gamma}a(y,\overline{y};h) = \frac{C_n}{h^n} \int_U e^{\frac{i}{h} f(y,z)} b(y,z;h)\, L(dz)
\endeq
Here $f(y,z) = -2i \varphi(y,\overline{y}; y+z,\overline{y}-\overline{z})$, $b(y,z;h) = a(y+z,\overline{y} - \overline{z};h)$, and $L(dz)$ is the Lebesgue measure on $\comp^n$. Furthermore, the constant $C_n\neq 0$ in (\ref{eq2.59}) depends on the dimension $n$ only and the region of integration $U\subset \comp^n$ is a small neighborhood of the origin. Using (\ref{eq2.21}) we see that
\begeq
\label{eq2.60}
f(y,z) = 2i \Phi''_{x\overline{x}}(y)\overline{z}\cdot z + {\cal O}(\abs{z}^3),
\endeq
so that in particular
\begeq
\label{eq2.61}
{\rm Im}\, f(y,z)\geq \frac{1}{C}\abs{z}^2,\quad z\in U,
\endeq
for some $C>0$ and all $y\in \comp^n$ close enough to $x_0$. For future reference, we shall now proceed to compute ${\rm det}\, \left(\nabla^2_z f(y,0)/i\right)$, where the Hessian $\nabla^2_z$ is taken in the real sense of $\real^{2n}\simeq \comp^n$, so that $\nabla^2_z f(y,0)/i$ is a real symmetric $2n\times 2n$ matrix. Writing $\comp^n \ni z = t + is$, $t,s\in \real^n$, we see that the quadratic part of the Taylor expansion of $z\mapsto f(y,z)/i$ at the origin, given in (\ref{eq2.60}), is of the form
\begin{multline}
\label{eq2.62}
2\Phi''_{x\overline{x}}(y)\overline{z}\cdot z = 2\left(\Phi''_{x\overline{x}}(y)t\cdot t + \Phi''_{x\overline{x}}(y)s\cdot s + i\Phi''_{x\overline{x}}(y)t\cdot s - i\Phi''_{x\overline{x}}(y)s\cdot t\right)\\
= 2\left(A_1 t\cdot t + A_1s\cdot s - 2A_2 t\cdot s\right).
\end{multline}
Here we have written $\Phi''_{x\overline{x}}(y) = A_1 + iA_2$, with $A_1$, $A_2$ being $n\times n$ real matrices and observed that since $\Phi''_{x\overline{x}}(y)$ is Hermitian, we have $A_1^t = A_1$, $A_2^t = -A_2$. We get
\begeq
\label{eq2.63}
2\Phi''_{x\overline{x}}(y)\overline{z}\cdot z = 2 \begin{pmatrix}
A_1 & A_2 \\\
-A_2 & A_1
\end{pmatrix} \begin{pmatrix}
t \\\
s
\end{pmatrix}\cdot \begin{pmatrix}
t \\\
s
\end{pmatrix},
\endeq
and writing
\begeq
\label{eq2.63.1}
\Phi''_{x\overline{x}}(y)\overline{z}\cdot z = \frac{1}{2} \begin{pmatrix}
\Phi''_{x\overline{x}}(y) & 0 \\\
0 & \Phi''_{\overline{x}x}(y)
\end{pmatrix} \begin{pmatrix}
\overline{z} \\\
z
\end{pmatrix}\cdot \begin{pmatrix}
z \\\
\overline{z}
\end{pmatrix},
\endeq
\begeq
\label{eq2.63.2}
\begin{pmatrix}
z \\\
\overline{z}
\end{pmatrix} = \begin{pmatrix}
1 & i \\\
1 & -i
\end{pmatrix} \begin{pmatrix}
t \\\
s
\end{pmatrix}, \quad \begin{pmatrix}
\overline{z} \\\
z
\end{pmatrix} = \begin{pmatrix}
1 & -i \\\
1 & i
\end{pmatrix} \begin{pmatrix}
t \\\
s
\end{pmatrix},
\endeq
we obtain the factorization
\begeq
\label{eq2.63.3}
\frac{1}{2} \begin{pmatrix}
1 & 1 \\\
i & -i
\end{pmatrix} \begin{pmatrix}
\Phi''_{x\overline{x}}(y) & 0 \\\
0 & \Phi''_{\overline{x}x}(y)
\end{pmatrix} \begin{pmatrix}
1 & -i \\\
1 & i
\end{pmatrix} = \begin{pmatrix}
A_1 & A_2 \\\
-A_2 & A_1
\end{pmatrix}.
\endeq
It follows from (\ref{eq2.60}), (\ref{eq2.63}), and (\ref{eq2.63.3}) that
\begeq
\label{eq2.64}
{\rm det}\, \left(\frac{\nabla^2_z f(y,0)}{i}\right) = 2^{4n} {\rm det}\, \begin{pmatrix}
A_1 & A_2 \\\
-A_2 & A_1
\end{pmatrix} = 2^{4n} \left({\rm det}\, (\Phi''_{x\overline{x}}(y))\right)^2,
\endeq
which is a smooth strictly positive function near $y=x_0$. For future reference, let us also compute the quadratic form
\begeq
\label{eq2.64.1}
\left(\nabla^2_z f(y,0)\right)^{-1} \begin{pmatrix}
t \\\
s
\end{pmatrix} \cdot \begin{pmatrix}
t \\\
s
\end{pmatrix} = \frac{1}{2^2 i} \begin{pmatrix}
A_1 & A_2 \\\
-A_2 & A_1
\end{pmatrix}^{-1} \begin{pmatrix}
t \\\
s
\end{pmatrix} \cdot \begin{pmatrix}
t \\\
s
\end{pmatrix},
\endeq
dual to $\nabla^2_z f(y,0)$. To this end, a simple computation using (\ref{eq2.63.3}) shows that
\begeq
\label{eq2.64.2}
\begin{pmatrix}
A_1 & A_2 \\\
-A_2 & A_1
\end{pmatrix}^{-1} = \begin{pmatrix}
i & i \\\
-1 & 1
\end{pmatrix} \begin{pmatrix}
\left(\Phi''_{x\overline{x}}(y)\right)^{-1} & 0 \\\
0 & \left(\Phi''_{\overline{x}x}(y)\right)^{-1}
\end{pmatrix} \begin{pmatrix}
-i & -1 \\\
-i & 1
\end{pmatrix},
\endeq
and therefore we get
\begin{multline}
\label{eq2.64.3}
\left(\nabla^2_z f(y,0)\right)^{-1} \begin{pmatrix}
t \\\
s
\end{pmatrix} \cdot \begin{pmatrix}
t \\\
s
\end{pmatrix} = \frac{1}{2^2 i} \begin{pmatrix}
\left(\Phi''_{x\overline{x}}(y)\right)^{-1} & 0 \\\
0 & \left(\Phi''_{\overline{x}x}(y)\right)^{-1}
\end{pmatrix} \begin{pmatrix}
\overline{z}\\\
z
\end{pmatrix} \cdot \begin{pmatrix}
z \\\
\overline{z}
\end{pmatrix} \\
= \frac{1}{2i} \left(\Phi''_{x\overline{x}}(y)\right)^{-1} \overline{z}\cdot z.
\end{multline}
Here the quadratic form in (\ref{eq2.64.3}) can be regarded as the symbol of the second order constant coefficient differential operator on $\comp^n_z$ given by
\begeq
\label{eq2.64.4}
\frac{2}{i} \left(\Phi''_{x\overline{x}}(y)\right)^{-1} D_z \cdot D_{\overline{z}} = 2i \left(\Phi''_{x\overline{x}}(y)\right)^{-1} \partial_z \cdot \partial_{\overline{z}},
\endeq
where $\displaystyle D_z = \frac{1}{i} \partial_z = \frac{1}{2}(D_t - iD_s)$, $\displaystyle D_{\overline{z}} = \frac{1}{i} \partial_{\overline{z}} = \frac{1}{2}(D_t + iD_s)$.

\medskip
\noindent
It follows from (\ref{eq2.60}), (\ref{eq2.61}), and (\ref{eq2.64}) that we are in the position to apply complex stationary phase in the form given in~\cite[Theorem 7.7.5]{H_book} to derive a complete asymptotic expansion for $A_{\Gamma}a(y,\overline{y};h)$ given in (\ref{eq2.59}), as $h\rightarrow 0^+$. We obtain therefore that there exist differential operators $L_{j,y}(D)$ in $(t,s)$ of order $2j$, which are $C^{\infty}$ functions of $y\in {\rm neigh}(x_0,\comp^n)$, such that for each $N$ we have uniformly for $y\in {\rm neigh}(x_0,\comp^n)$,
\begeq
\label{eq2.65}
\left(A_{\Gamma}a\right)(y,\overline{y};h) = \sum_{j=0}^{N-1} h^j \left(L_{j,y}(D)b\right)(y,0) + {\cal O}_N(h^N).
\endeq
Let us also recall from~\cite[Theorem 7.7.5]{H_book}, using also (\ref{eq2.64.4}), the following explicit expressions for the operators $L_j$,
\begeq
\label{eq2.66}
(L_{j,y}(D)b)(y,0) = \frac{C_n \pi^n}{2^n {\rm det}\, (\Phi''_{x\overline{x}}(y))} \sum_{\nu - \mu = j} \sum_{2\nu \geq 3\mu} \frac{i^{\nu -j}}{\mu!\nu!} \left( \left(\Phi''_{x\overline{x}}(y)\right)^{-1}\partial_z \cdot \partial_{\overline{z}}\right)^{\nu} \left(g^{\mu}b\right)(y,0)
\endeq
where
$$
g(y,z) = f(y,z) - 2i \Phi''_{x\overline{x}}(y)\overline{z}\cdot z = {\cal O}(\abs{z}^3).
$$
In particular,
\begeq
\label{eq2.66.1}
L_{0,y} = \frac{C_n \pi^n}{2^n {\rm det}\, (\Phi''_{x\overline{x}}(y))}
\endeq
satisfies
\begeq
\label{eq2.67}
\frac{1}{C} \leq \abs{L_{0,y}} \leq C, \quad y\in {\rm neigh}(x_0,\comp^n).
\endeq
The expansion (\ref{eq2.65}) can be differentiated any number of times with respect to $y$, $\overline{y}$. Following~\cite[Section 3]{DelHiSj}, our purpose is now to show that there exists an amplitude $a(x,\widetilde{y};h)\in S^0_{{\rm cl}}({\rm neigh}((x_0,\overline{x_0}),\comp^{2n}))$ satisfying (\ref{eq2.29.1}), (\ref{eq2.30}) such that
\begeq
\label{eq2.67.1}
(A_{\Gamma}a)(y,\overline{y};h) = 1 + {\cal O}(h^{\infty}),
\endeq
for $y \in {\rm neigh}(x_0,\comp^n)$. Looking for $a$ in the form (\ref{eq2.29.1}), we may write in view of (\ref{eq2.65}),
\begeq
\label{eq2.67.2}
\left(A_{\Gamma}a\right)(y,\overline{y};h) \sim \sum_{\ell = 0}^{\infty} h^{\ell} c_{\ell}(y), \quad c_{\ell}(y) = \sum_{j+k= \ell} \left(L_{k,y}(D)b_j\right)(y,0),
\endeq
where $b_j(y,z) = a_j(y+z,\overline{y} - \overline{z})$. Using the expansion (\ref{eq2.67.2}), we shall determine successively $a_0,a_1,\ldots$ satisfying (\ref{eq2.30}), so that
\begeq
\label{eq2.67.3}
c_0(y) = L_{0,y}a_0(y,\overline{y}) = 1,
\endeq
\begeq
\label{eq2.67.4}
c_{\ell}(y) = \underset{j+k =\ell}{\sum} (L_{k,y}(D)b_j)(y,0) = 0,\quad \ell \geq 1.
\endeq
First, (\ref{eq2.67.3}) determines the $C^{\infty}$ function $a_0(y,\overline{y})$ uniquely, in view of (\ref{eq2.66.1}), (\ref{eq2.67}), and taking an almost holomorphic extension from the anti-diagonal, we obtain
$$
a_0(x,\widetilde{y}) \in C^{\infty}({\rm neigh}((x_0,\overline{x_0}),\comp^{2n})),
$$
satisfying (\ref{eq2.30}) for $j=0$. Assume next that $a_0,a_1,\ldots, a_{M-1}$, satisfying (\ref{eq2.30}), have been determined so that (\ref{eq2.67.4}) holds for $\ell \leq M-1$. To determine $a_M$, we consider the equation (\ref{eq2.67.4}) with $\ell = M$, writing
\begeq
\label{eq2.67.5}
c_{M}(y) = L_{0,y}a_{M}(y,\overline{y}) + \underset{\underset{j<M}{j+k =M}} {\sum} (L_{k,y}(D)b_j)(y,0) = 0.
\endeq
Here we may notice that the expression in the sum in (\ref{eq2.67.5}) only depends on the values of the $a_j$'s along the anti-diagonal, for $j\leq M-1$. Indeed, for each $\alpha \in \nat^n$, we have in view of the almost holomorphy of $a_j = a_j(x,\widetilde{y})$ along the anti-diagonal given in (\ref{eq2.30}), for $j\leq M-1$,
\begeq
\label{eq2.68}
(D^{\alpha}_z b_j)(y,0) = D^{\alpha}_z \left(a_j(y+z,\overline{y}-\overline{z})\right)|_{z=0} = \left(D^{\alpha}_x a_j\right)(y,\overline{y}) = D^{\alpha}_y \left(a_j(y,\overline{y})\right),
\endeq
\begeq
\label{eq2.69}
(D^{\alpha}_{\overline{z}} b_j)(y,0) = D^{\alpha}_{\overline{z}} \left(a_j(y+z,\overline{y}-\overline{z})\right)|_{z=0} = (-1)^{\alpha}\left(D^{\alpha}_{\widetilde{y}} a_j\right)(y,\overline{y}) = (-1)^{\alpha} D^{\alpha}_{\overline{y}} \left(a_j(y,\overline{y})\right).
\endeq
It follows that (\ref{eq2.67.5}) has a unique $C^{\infty}$ solution $a_M(y,\overline{y})$, for $y\in {\rm neigh}(x_0,\comp^n)$, and we may then take an almost holomorphic extension.

\medskip
\noindent
The discussion above may be summarized in the following theorem, which is the main result of this section.
\begin{theo}
\label{theo_amplitude}
There exists an elliptic symbol $a(x,\widetilde{y};h) \in S^0_{{\rm cl}}({\rm neigh}((x_0,\overline{x_0}),\comp^{2n}))$ of the form
\begeq
\label{eq2.70}
a(x,\widetilde{y};h) \sim \sum_{j=0}^{\infty} a_j(x,\widetilde{y}) h^j,
\endeq
in $C^{\infty}$, with $a_j \in C^{\infty}({\rm neigh}((x_0,\overline{x_0}),\comp^{2n}))$ satisfying
\begeq
\label{eq2.71}
\left(\partial_{\overline{x}}a_j\right)(x,\widetilde{y}) = {\cal O}(\abs{\widetilde{y}-\overline{x}}^{\infty}),\quad \left(\partial_{\overline{y}}a_j\right)(x,\widetilde{y}) =
{\cal O}(\abs{\widetilde{y}-\overline{x}}^{\infty}),\quad j=0,1,2,\ldots,
\endeq
such that we have
\begin{multline}
\label{eq2.72}
(A_{\Gamma}a)(y,\overline{y};h) = \frac{1}{h^n}\int\!\!\!\int_{\Gamma(y,\overline{y})} e^{\frac{2}{h} \varphi(y,\overline{y};x,\widetilde{y})} a(x,\widetilde{y};h)\, dx\, d\widetilde{y} \\
= 1 + {\cal O}(h^{\infty}), \quad y\in {\rm neigh}(x_0,\comp^n).
\end{multline}
Here $\Gamma(y,\overline{y})$ is a good contour for the function $(x,\widetilde{y}) \mapsto {\rm Re}\, \varphi(y,\overline{y};x,\widetilde{y})$.
The restrictions of the $a_j$'s to the anti-diagonal $\widetilde{y} = \overline{x}$ are uniquely determined, for $j\geq 0$, and we have
$$
a_0(x,\overline{x}) = A_n\, {\rm det}\, (\Phi''_{x\overline{x}}(x)), \quad x\in {\rm neigh}(x_0,\comp^n),
$$
with $A_n\neq 0$ depending on $n$ only.
\end{theo}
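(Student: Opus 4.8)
The plan is to simply read the theorem off the asymptotic expansion of $A_\Gamma a$ obtained above. First, by Proposition~\ref{prop_contour}, the quantity $(A_\Gamma a)(y,\overline{y};h)$ is, along the anti-diagonal and modulo ${\cal O}(h^\infty)$ in $C^\infty$, independent of the choice of good contour, so it suffices to work with the affine contour (\ref{eq2.29}) and the complex stationary phase expansion (\ref{eq2.65})--(\ref{eq2.66}). For an amplitude of the form (\ref{eq2.29.1}), (\ref{eq2.30}) this produces the formal expansion (\ref{eq2.67.2}), $(A_\Gamma a)(y,\overline{y};h)\sim\sum_\ell h^\ell c_\ell(y)$ with $c_\ell(y)=\sum_{j+k=\ell}(L_{k,y}(D)b_j)(y,0)$ and $b_j(y,z)=a_j(y+z,\overline{y}-\overline{z})$, and the goal becomes to choose the $a_j$ so that $c_0\equiv 1$ and $c_\ell\equiv 0$ for $\ell\ge 1$, i.e.\ so that (\ref{eq2.67.3}) and (\ref{eq2.67.4}) hold.

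Next I would solve this triangular system recursively. The equation $c_0(y)=L_{0,y}\,a_0(y,\overline{y})=1$ determines $a_0$ on the anti-diagonal uniquely, since $L_{0,y}$ is the non-vanishing function (\ref{eq2.66.1}), (\ref{eq2.67}); explicitly $a_0(y,\overline{y})=L_{0,y}^{-1}=2^n\det(\Phi''_{x\overline{x}}(y))/(C_n\pi^n)$, which is already the claimed formula with $A_n=2^n/(C_n\pi^n)\ne 0$, and one then takes an almost holomorphic extension of $a_0$ off the anti-diagonal, so that (\ref{eq2.71}) holds for $j=0$. Assuming $a_0,\dots,a_{M-1}$, satisfying (\ref{eq2.71}), have been found so that $c_\ell\equiv 0$ for $\ell\le M-1$, the equation $c_M\equiv 0$ is (\ref{eq2.67.5}); the crucial point, which is exactly the content of (\ref{eq2.68}), (\ref{eq2.69}), is that each $(D_z^\alpha b_j)(y,0)$ and $(D_{\overline{z}}^\alpha b_j)(y,0)$ depends only on the restriction of $a_j$ to the anti-diagonal, so the sum over $j<M$ in (\ref{eq2.67.5}) is an already known $C^\infty$ function of $y$. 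Dividing by the elliptic factor $L_{0,y}$ then determines $a_M(y,\overline{y})$ uniquely, and an almost holomorphic extension gives $a_M\in C^\infty$ satisfying (\ref{eq2.71}). In particular the restrictions $a_j(x,\overline{x})$ are uniquely determined for every $j\ge 0$, which is the uniqueness assertion in the theorem.

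Having produced the sequence $(a_j)_{j\ge 0}$, I would assemble a genuine classical symbol $a(x,\widetilde{y};h)\in S^0_{\rm cl}$ with $a\sim\sum_j a_j h^j$ in $C^\infty$ by a standard Borel summation, on a slightly smaller neighbourhood of $(x_0,\overline{x_0})$; note that the condition (\ref{eq2.71}) is imposed on the individual $a_j$, so nothing is lost in this step. Since $a=a_0+{\cal O}(h)$ and $\abs{a_0}\asymp 1$ near $(x_0,\overline{x_0})$ by (\ref{eq2.67}), the symbol $a$ is elliptic there for $h$ small. By construction the stationary phase expansion (\ref{eq2.65}) for $A_\Gamma a$ has coefficient $c_\ell$ equal to $1$ for $\ell=0$ and to $0$ for $\ell\ge 1$, whence (\ref{eq2.72}); by Proposition~\ref{prop_contour} this holds for any good contour, and since (\ref{eq2.65}) may be differentiated any number of times in $y,\overline{y}$, the relation holds in the $C^\infty$ sense.

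Essentially all of the analytic work has already been carried out in the preparatory material, so the remaining task is bookkeeping, and the main point deserving care is the well-posedness of the recursion: one must verify that at stage $M$ no derivative of the unknown $a_M$ transverse to the anti-diagonal, and no value of $a_j$ ($j<M$) off the anti-diagonal, enters (\ref{eq2.67.5}). This is secured by (\ref{eq2.68}), (\ref{eq2.69}) together with the fact that the $L_{j,y}(D)$ are $z$-differential operators evaluated at $z=0$, so that only the jets of the $b_j$ at $z=0$ occur, and these coincide with $y$-derivatives of $a_j(y,\overline{y})$; checking in addition that the successive almost holomorphic extensions can be organized into a single Borel-summable family is routine.
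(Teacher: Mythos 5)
Your proposal is correct and follows essentially the same route as the paper: reduction to the affine contour via Proposition \ref{prop_contour}, the stationary phase expansion (\ref{eq2.65})--(\ref{eq2.66}), the recursive solution of the triangular system (\ref{eq2.67.3})--(\ref{eq2.67.5}) using (\ref{eq2.68})--(\ref{eq2.69}) to see that only anti-diagonal restrictions enter, almost holomorphic extension at each stage, and Borel summation. Your explicit identification $a_0(y,\overline{y})=L_{0,y}^{-1}=2^n\det(\Phi''_{x\overline{x}}(y))/(C_n\pi^n)$ matches (\ref{eq2.66.1}) and the stated formula with $A_n=2^n/(C_n\pi^n)$.
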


\section{Approximate reproducing property in the weak sense}
\label{sec:repr-property}
\setcounter{equation}{0}
\setcounter{equation}{0}
Let $V\Subset \Omega$ be a small open neighborhood of $x_0 \in \Omega$, with $C^{\infty}$--boundary. Let $\Psi \in C^{\infty}({\rm neigh}((x_0,\overline{x_0}),\comp^{2n}))$ be an almost holomorphic extension of the $C^{\infty}$ strictly plurisubharmonic weight function $\Phi$, so that (\ref{eq2.2}), (\ref{eq2.3}) hold. We may assume that $\Psi$, as well as the classical $C^{\infty}$ symbol $a$, introduced in Theorem  \ref{theo_amplitude} and satisfying (\ref{eq2.72}), are defined in a neighborhood of the closure of $V\times \rho(V)$. Here $\rho(x) = \overline{x}$ is the map of complex conjugation.

\medskip
\noindent
Let us set
\begeq
\label{eq3.1}
\widetilde{\Pi}_{V} u(x) = \frac{1}{h^n} \int_{V} e^{\frac{2}{h}\Psi(x,\overline{y})} a(x,\overline{y};h) u(y) e^{-\frac{2}{h}\Phi(y)}\, dy\, d\overline{y}, \quad u\in L^2_{\Phi}(V):= L^2(V,e^{-2\Phi/h}L(dx)).
\endeq
It follows from Proposition \ref{prop1} and the Schur test that
\begeq
\label{eq3.3}
\widetilde{\Pi}_{V} = {\cal O}(1): L^2_{\Phi}(V) \rightarrow L^2_{\Phi}(V).
\endeq
Furthermore, combining (\ref{eq2.3}), (\ref{eq2.70}), (\ref{eq2.71}) with the Schur test, we obtain
\begeq
\label{eq3.3.0.1}
\overline{\partial} \circ \widetilde{\Pi}_{V} = {\cal O}(h^{\infty}): L^2_{\Phi}(V) \rightarrow L^2_{\Phi,(0,1)}(V).
\endeq
Here the target space is a space of $(0,1)$--forms on $V$. Letting $u\in L^2_{\Phi}(V)$ be holomorphic, we can express $\widetilde{\Pi}_V u$ in the polarized form, as a contour integral in $\comp^{2n}_{y,\widetilde{y}}$ of a $(2n,0)$--form,
\begeq
\label{eq3.3.0.2}
\widetilde{\Pi}_{V} u(x) = \frac{1}{h^n} \int\!\!\!\int_{\Gamma_V} e^{\frac{2}{h}(\Psi(x,\widetilde{y}) - \Psi(y,\widetilde{y}))} a(x,\widetilde{y};h) u(y)\, dy\, d\widetilde{y}, \quad u\in H_{\Phi}(V):={\rm Hol}(V)\cap L^2_{\Phi}(V).
\endeq
Here the contour of integration $\Gamma_V \subset V \times \rho(V)$ is given by
\begeq
\label{eq3.2}
\Gamma_V = \{\widetilde{y}=\overline{y},\,\,y\in V\}.
\endeq

\medskip
\noindent
The purpose of this section is to show that the operator $\widetilde{\Pi}_V$ in (\ref{eq3.3.0.2}) satisfies an approximate reproducing property, in the weak formulation. Specifically, we shall prove that for a suitable class of $u,v\in H_{\Phi}(V)$, the sesquilinear form
\begeq
\label{eq3.3.1}
H_{\Phi}(V) \times H_{\Phi}(V) \ni (u,v) \mapsto (\widetilde{\Pi}_{V} u,v)_{L^2_{\Phi}(V)}
\endeq
agrees, up to an ${\cal O}(h^{\infty})$--error, with the scalar product $(u,v)_{H_{\Phi}(V)}$. In~\cite{DelHiSj}, we have observed that this result cannot be expected to hold if $u,v$ are general elements of $H_{\Phi}(V)$, and similar to~\cite{DelHiSj}, we shall demand that $v$ should belong to an exponentially weighted space of holomorphic functions of the form $H_{\Phi_1}(V)$, where $\Phi_1 \leq \Phi$, with strict inequality away from a small neighborhood of $x_0$. The following theorem is the main result of this section.

\begin{theo}
\label{reproducing_scalar_product}
There exists a small open neighborhood $W \Subset V$ of $x_0$ with $C^{\infty}$--boundary such that for every $\Phi_1 \in C(\Omega; \real)$,
$\Phi_1 \leq \Phi$, with $\Phi_1 < \Phi$ on $\Omega \backslash \overline{W}$, and every $N\in {\bf N}$ there exists $C_N$ such that for all $u\in H_{\Phi}(V)$, $v\in H_{\Phi_1}(V)$, we have
\begeq
\label{eq3.3.2}
\abs{(\widetilde{\Pi}_V u,v)_{L^2_{\Phi}(V)} - (u,v)_{H_{\Phi}(V)}} \leq C_N h^N \norm{u}_{H_{\Phi}(V)} \norm{v}_{H_{\Phi_1}(V)}.
\endeq
\end{theo}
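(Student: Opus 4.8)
\medskip
\noindent
The plan is to evaluate the sesquilinear form $(\widetilde{\Pi}_V u,v)_{L^2_\Phi(V)}$ directly, rewriting it as a contour integral in $\comp^{4n}$ to which Theorem~\ref{theo_amplitude} applies after a contour deformation modelled on the proof of Proposition~\ref{prop_contour}. The first step is a reduction to a ball: since $\widetilde{\Pi}_V={\cal O}(1):L^2_\Phi(V)\to L^2_\Phi(V)$ by (\ref{eq3.3}) and, by Proposition~\ref{prop1}, the kernel of $\widetilde{\Pi}_V$ carries a factor $e^{-|x-y|^2/Ch}$ once the weights are extracted, a successive localization — using $\Phi_1\le\Phi$ with $\Phi_1<\Phi$ off $\overline{W}$ to confine the $x$--variable, and the Gaussian decay in $|x-y|$ to confine the $y$--variable — shows that, up to an error ${\cal O}(h^\infty)\norm{u}_{H_\Phi(V)}\norm{v}_{H_{\Phi_1}(V)}$, one may replace $V$ by a fixed small ball $B\Subset V$ around $x_0$ in both integrations; here $W$ is chosen so that $\overline{W}$ lies well inside $B$. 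On $B$ the interior pointwise bounds $|u(y)|\le Ch^{-n/2}e^{\Phi(y)/h}\norm{u}_{H_\Phi(V)}$, $|v(x)|\le Ch^{-n/2}e^{\Phi_1(x)/h}\norm{v}_{H_{\Phi_1}(V)}$ hold uniformly, all the objects of Section~\ref{sec:asymp_inversion} are available, and $\partial V$ no longer intervenes.

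\medskip
\noindent
Next, using the polarized representation (\ref{eq3.3.0.2}) of $\widetilde{\Pi}_V u$, the relation $\Phi(\zeta)=\Psi(\zeta,\overline{\zeta})$ of (\ref{eq2.2}), and writing $\check v(w):=\overline{v(\overline{w})}$, which is holomorphic in $w$, the localized form of $(\widetilde{\Pi}_V u,v)_{L^2_\Phi(V)}$ becomes, up to a nonzero dimensional constant,
$$
\frac{1}{h^n}\int\!\!\int\!\!\int\!\!\int_{\Lambda} e^{\frac{2}{h}\left(\Psi(x,\widetilde{y})-\Psi(x,\widetilde{x})-\Psi(y,\widetilde{y})\right)}\,a(x,\widetilde{y};h)\,u(y)\,\check v(\widetilde{x})\;dx\wedge d\widetilde{x}\wedge dy\wedge d\widetilde{y},
$$
where $\Lambda\subset\comp^{4n}_{x,\widetilde{x},y,\widetilde{y}}$ is the totally real contour $\{\widetilde{x}=\overline{x},\ \widetilde{y}=\overline{y}\}$ over $(x,y)$ near $(x_0,x_0)$. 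By the algebraic identity (\ref{eq2.12}), the exponent equals $\frac{2}{h}\left(\varphi(y,\widetilde{x};x,\widetilde{y})-\Psi(y,\widetilde{x})\right)$; and by the holomorphy of $u$ and $\check v$ together with the almost-holomorphy (\ref{eq2.3}), (\ref{eq2.30}) along the anti-diagonals, the $(4n,0)$--form under the integral sign is $\overline{\partial}$--closed to infinite order.

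\medskip
\noindent
We then deform $\Lambda$ to the contour on which $\widetilde{x}=\overline{y}$ and $(x,\widetilde{y})$ runs over the affine good contour $\Gamma(y,\overline{y})$ of (\ref{eq2.29}), by means of the homotopy
$$
G_\theta(y,z)=\bigl(y+z,\ \overline{y}+(1-\theta)\overline{z},\ y,\ \overline{y}-\theta\overline{z}\bigr),\qquad \theta\in[0,1],\ \ |z|<\delta,
$$
with $\delta>0$ small and fixed. Applying Stokes' formula over the region swept by $G_\theta$, exactly as in the proof of Proposition~\ref{prop_contour}, the difference of the $\theta=0$ and $\theta=1$ integrals equals the integral of the ($\overline{\partial}$, infinite-order flat) exterior derivative over that region, plus boundary terms. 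On $G_\theta(y,z)$ the variables of (\ref{eq2.20}) are $z'=z$, $w'=-\overline{z}$, so that
$$
\varphi(y,\widetilde{x};x,\widetilde{y})\big|_{G_\theta}=-\Psi''_{xy}(y,\widetilde{x})\,\overline{z}\cdot z+{\cal O}_N\!\bigl((1-\theta)^N|z|^N\bigr)|z|^2+{\cal O}(|z|^3),
$$
and hence, expanding $\Psi''_{xy}(y,\overline{y}+(1-\theta)\overline{z})=\Phi''_{x\overline{x}}(y)+{\cal O}((1-\theta)|z|)$ and using (\ref{eq2.24}), $\Re\varphi|_{G_\theta}\le -\Phi''_{x\overline{x}}(y)\overline{z}\cdot z+{\cal O}(|z|^3)$. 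Combining this with a Taylor expansion of $\Re\Psi(y,\overline{y}+(1-\theta)\overline{z})$ about the anti-diagonal and with the pointwise bounds for $u(y)$ and $\check v(\widetilde{x})=\overline{v(y+(1-\theta)z)}$, a direct computation — in which, $\Phi$ being real, the linear and the holomorphic-quadratic Taylor terms cancel — shows that the $h^{-1}$ coefficient in the resulting exponential majorant of the integrand along $G_\theta$ equals
$$
2\Re\varphi|_{G_\theta}+\Phi''_{x\overline{x}}(y)\,\overline{\eta}\cdot\eta+{\cal O}(|z|^3)=\bigl(-2+(1-\theta)^2\bigr)\Phi''_{x\overline{x}}(y)\,\overline{z}\cdot z+{\cal O}(|z|^3),\qquad \eta=(1-\theta)z.
$$
Since $(1-\theta)^2\le1<2$ and $\Phi''_{x\overline{x}}(y)\overline{z}\cdot z\asymp|z|^2$, this is $\le-|z|^2/C$, uniformly in $\theta\in[0,1]$ and $y$. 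As in Proposition~\ref{prop_contour}, this Gaussian bound (together with $e^{-t^2/Ch}t^N={\cal O}(h^{N/2})$) makes the interior $\overline{\partial}$--flat term and the boundary term at $\{|z|=\delta\}$ into ${\cal O}(h^\infty)\norm{u}_{H_\Phi(V)}\norm{v}_{H_{\Phi_1}(V)}$, while the boundary term at the edge of the $y$--range is negligible by the same gap $\Phi-\Phi_1>0$ off $\overline{W}$.

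\medskip
\noindent
On the contour $\theta=1$, for each fixed $y$ the integral over $(x,\widetilde{y})\in\Gamma(y,\overline{y})$ is precisely $(A_{\Gamma}a)(y,\overline{y};h)=1+{\cal O}(h^\infty)$ by Theorem~\ref{theo_amplitude}, uniformly in $y$, while the remaining integral over $\{\widetilde{x}=\overline{y}\}$ against $e^{-\frac2h\Psi(y,\overline{y})}u(y)\check v(\overline{y})=e^{-\frac2h\Phi(y)}u(y)\overline{v(y)}$ reproduces $\int u(y)\overline{v(y)}e^{-2\Phi(y)/h}L(dy)$. Undoing the localization to $B$ (at the cost of another ${\cal O}(h^\infty)\norm{u}\norm{v}$, again by the gap off $\overline{W}$) and estimating the error from the ${\cal O}(h^\infty)$ in $A_{\Gamma}a$ by Cauchy--Schwarz and $\Phi_1\le\Phi$, one arrives at $(u,v)_{H_\Phi(V)}$ up to ${\cal O}(h^\infty)\norm{u}_{H_\Phi(V)}\norm{v}_{H_{\Phi_1}(V)}$, which is (\ref{eq3.3.2}). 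The heart of the matter, and the main obstacle, is the deformation step: verifying that throughout $G_\theta$ the real part of the phase, once the exponential weight losses $e^{\Phi/h}$ from the bound on $u$ and $e^{\Phi_1/h}\le e^{\Phi/h}$ from that on $v$ are absorbed, stays $\le-|z|^2/C$. This rests on the exact cancellation of the linear and holomorphic-quadratic Taylor terms (a consequence of $\Phi$ being real) and on the inequality $(1-\theta)^2\le1<2$; the boundary contributions and the preliminary reduction to $B$ rely on the strict gap $\Phi-\Phi_1>0$ off $\overline{W}$, which fixes the size of $W$. A secondary but routine difficulty — handled verbatim as in Proposition~\ref{prop_contour} — is that, $\widetilde{\Pi}_V u$ being holomorphic only modulo ${\cal O}(h^\infty)$, the $(4n,0)$--form is $\overline{\partial}$--closed merely to infinite order, so every contour move produces ${\cal O}(h^\infty)$ errors that must be carried along.
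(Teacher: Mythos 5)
Your argument is correct, but it follows a genuinely different route from the paper's. The paper first decomposes $v$ by a resolution of the identity, $v=\int v_z\,dz\,d\overline{z}+{\cal O}(e^{-1/Ch})$, with coherent states $v_z$ localized at $z$; the resulting weight $F_z(\widetilde{x})=\Phi(\overline{\widetilde{x}})-\delta\abs{\widetilde{x}-\overline{z}}^2$ turns the exponent $G_z$ of (\ref{eq3.12}) into a function with an \emph{isolated} non-degenerate critical point of signature $(4n,4n)$ at $(z,\overline{z},z,\overline{z})$, so that the abstract good-contour machinery (Morse lemma with parameters, any two good contours homotopic through good contours) justifies the deformation from $\Gamma_{V_1}\times\Gamma_{V_2}$ to the composed contour without any explicit interpolation; Theorem \ref{theo_amplitude} then finishes each $(\widetilde{\Pi}_{V_2}u,v_z)$, and one integrates in $z$. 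You skip the resolution of the identity entirely and deform the full $4n$-dimensional contour at once. Your exponent (the analogue of $G_z$ without the $-\delta\abs{\widetilde{x}-\overline{z}}^2$ term) is degenerate along the $2n$-dimensional set $\{x=y,\ \widetilde{x}=\overline{x},\ \widetilde{y}=\overline{y}\}$, so the abstract machinery is not available; you compensate by exhibiting the explicit affine homotopy $G_\theta$ and computing the weight-corrected exponent along it. I checked this computation: with all four Taylor expansions based at $(y,\overline{y})$, the $\Phi$-terms cancel identically and the transversal quadratic coefficient is $\theta^2-2\theta-1=(1-\theta)^2-2\in[-2,-1]$, so the exponent is $\leq-\abs{z}^2/C$ uniformly in $\theta$, exactly as you claim; the $\overline{\partial}$-flatness is ${\cal O}(\abs{z}^{\infty}+h^{\infty})$ since all anti-diagonal distances along $G_\theta$ are ${\cal O}(\abs{z})$, and the lateral boundary faces are controlled by $\abs{z}=\delta$ and by the gap $\Phi-\Phi_1\geq 1/C$ off $\overline{W}$. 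The degeneracy at $z=0$ is harmless precisely because the phase vanishes there and the surviving $y$-integral is the scalar product itself. What each approach buys: the paper's is uniform with the analytic case of~\cite{DelHiSj} and needs no explicit homotopy, at the cost of the coherent-state layer and its uniformity in $z$; yours is shorter and more self-contained, at the cost of a hand computation whose success (the coefficient staying in $[-2,-1]$) is specific to the chosen affine interpolation. One cosmetic point: your inequality $\Re\varphi|_{G_\theta}\leq-\Phi''_{x\overline{x}}(y)\overline{z}\cdot z+{\cal O}(\abs{z}^3)$ is in fact an equality modulo ${\cal O}(\abs{z}^3)$, which is what the final bookkeeping uses.
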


\medskip
\noindent
Following~\cite[Section 4]{DelHiSj}, the proof of Theorem \ref{reproducing_scalar_product} will proceed by a contour deformation argument. Compared with the analytic case treated in~\cite{DelHiSj}, here, when justifying the contour deformation, we shall have to take into account the lack of holomorphy in the integrand in (\ref{eq3.3.0.2}), giving rise to an additional correction term, to be estimated.

\medskip
\noindent
Let $W \Subset V_1 \Subset V_2  \Subset V$ be open neighborhoods of $x_0$ with $C^{\infty}$--boundaries, with $W$ to be chosen small enough, and let $\Phi_1 \in C(\Omega; \real)$ be such that
\begeq
\label{eq3.4}
\Phi_1 \leq \Phi \,\, \wrtext{in}\,\, \Omega, \quad \Phi_1 < \Phi \,\, \wrtext{on}\,\, \Omega\backslash \overline{W}.
\endeq
Arguing as in~\cite[Section 4]{DelHiSj}, we find that the scalar product
\begeq
\label{eq3.5}
(\widetilde{\Pi}_{V} u,v)_{L^2_{\Phi}(V)} = \int_{V} \widetilde{\Pi}_{V} u(x) \overline{v(x)} e^{-2\Phi(x)/h}\, L(dx), \quad u\in H_{\Phi}(V),\,\, v\in H_{\Phi_1}(V),
\endeq
takes the form
\begeq
\label{eq3.6}
(\widetilde{\Pi}_{V} u,v)_{L^2_{\Phi}(V)} = \int_{V_1} \widetilde{\Pi}_{V_2} u(x) \overline{v(x)} e^{-2\Phi(x)/h}\, L(dx) +
{\cal O}(1) e^{-\frac{1}{C h}} \norm{u}_{H_{\Phi}(V)} \norm{v}_{H_{\Phi_1}(V)}.
\endeq
Here and below $C>0$ is independent of $u$, $v$, and similar to (\ref{eq3.3.0.2}), we have written
\begeq
\label{eq3.6.3}
\widetilde{\Pi}_{V_2} u(x) = \frac{1}{h^n} \int\!\!\!\int_{\Gamma_{V_2}} e^{\frac{2}{h}(\Psi(x,\widetilde{y}) - \Psi(y,\widetilde{y}))} a(x,\widetilde{y};h) u(y)\, dy\, d\widetilde{y}.
\endeq
Next, an application of~\cite[Proposition 2.2]{DelHiSj} gives that there exists $\eta > 0$ such that
\begeq
\label{eq3.6.4}
v(x) = \int_V v_z(x) dz\,d\overline{z} + {\cal O}(1) \norm{v}_{H_{\Phi}(V)} e^{\frac{1}{h}(\Phi(x) - \eta)}, \quad x\in V_1.
\endeq
Here
\begeq
\label{eq3.6.5}
v_z(x) = \frac{1}{(2 \pi h)^n} e^{\frac{i}{h}(x-z)\cdot \theta(x,z)} v(z)\chi(z) {\rm det}\,(\partial_{\overline{z}}\theta(x,z)) \in {\rm Hol}(V),
\endeq
and $\theta(x,z)$ depends holomorphically on $x\in V$ with
\begeq
\label{eq3.6.6}
-{\rm Im}\, \left((x-z)\cdot \theta(x,z)\right) + \Phi(z) \leq \Phi(x) - \delta \abs{x-z}^2, \quad x,z\in V,
\endeq
for some $\delta >0$. The function $\chi\in C^{\infty}_0(V;[0,1])$ in (\ref{eq3.6.5}) satisfies $\chi = 1$ in $V_2$.

\medskip
\noindent
The resolution of the identity (\ref{eq3.6.4}), (\ref{eq3.6.5}), (\ref{eq3.6.6}) is valid for an arbitrary element of $H_{\Phi}(V)$, and restricting the attention to $v\in H_{\Phi_1}(V)$, we get, letting $W \Subset W_1 \Subset V_1$,
\begeq
\label{eq3.7}
v(x) = \int_{W_1} v_z(x)\, dz\,d\overline{z} + {\cal O}(1) \norm{v}_{H_{\Phi_1}(V)} e^{\frac{1}{h}(\Phi(x) - \frac{1}{C})}, \quad x\in V_1.
\endeq
We get, combining (\ref{eq3.6}), (\ref{eq3.7}), and (\ref{eq3.3}),
\begin{multline}
\label{eq3.9}
(\widetilde{\Pi}_{V} u,v)_{L^2_{\Phi}(V)} \\ = \int_{W_1}\!\int_{V_1} \widetilde{\Pi}_{V_2} u(x) \overline{v_z(x)} e^{-2\Phi(x)/h}\, L(dx)\, dz\, d\overline{z} + {\cal O}(1) e^{-\frac{1}{C h}}
\norm{u}_{H_{\Phi}(V)} \norm{v}_{H_{\Phi_1}(V)} \\
= \int_{W_1} (\widetilde{\Pi}_{V_2} u,v_z)_{L^2_{\Phi}(V_1)} dz\, d\overline{z} + {\cal O}(1) e^{-\frac{1}{C h}} \norm{u}_{H_{\Phi}(V)} \norm{v}_{H_{\Phi_1}(V)}.
\end{multline}
Similar to~\cite{DelHiSj}, the advantage of the representation (\ref{eq3.9}) lies in the good localization properties of the holomorphic functions $v_z$, for $z\in W_1$, in view of (\ref{eq3.6.6}).

\bigskip
\noindent
The following key observation is analogous to~\cite[Proposition 4.2]{DelHiSj}.
\begin{prop}
\label{good_contours}
Let $\delta >0$ be small and let us set for $z\in V$, $(x,\widetilde{x},y,\widetilde{y}) \in V\times \rho(V)\times V \times \rho(V) \subset \comp^{4n}$,
\begin{multline}
\label{eq3.12}
G_z(x,\widetilde{x},y,\widetilde{y}) = 2{\rm Re}\, \Psi(x,\widetilde{y}) - 2{\rm Re}\, \Psi(y,\widetilde{y}) + \Phi(y) + F_{z}(\widetilde{x}) - 2{\rm Re}\, \Psi(x,\widetilde{x}) \\
= 2{\rm Re}\, \varphi(y,\widetilde{x}; x,\widetilde{y}) - 2{\rm Re}\, \Psi(y,\widetilde{x}) + \Phi(y) + F_{z}(\widetilde{x}),
\end{multline}
where
\begeq
\label{eq3.12.1}
F_{z}(\widetilde{x}) = \Phi(\overline{\widetilde{x}}) - \delta \abs{\widetilde{x} - \overline{z}}^2.
\endeq
The $C^{\infty}$ function $G_{z}$ has a non-degenerate critical point at $(z,\overline{z},z,\overline{z})$ of signature $(4n,4n)$, with the critical value $0$. The following submanifolds of $\comp^{4n}$ are good contours for $G_{z}$ in a neighbourhood of $(z,\overline{z},z,\overline{z})$, i.e. they are both of dimension $4n$, pass through the critical point, and are such that the Hessian of $G_z$ along the contours is negative definite:
\begin{enumerate}
\item The product contour
\begeq
\label{eq3.12.2}
\Gamma_{V}\times \Gamma_{V}=\{(x,\widetilde{x},y,\widetilde{y});\,\widetilde{x} = \overline{x},\,\,\widetilde{y} = \overline{y},\,\, x\in V,\,\,y\in V\}.
\endeq
\item The composed contour
\begeq
\label{eq3.12.3}
\{(x,\widetilde{x},y,\widetilde{y});\, (y,\widetilde{x}) \in \Gamma_{V},\, (x,\widetilde{y}) \in \Gamma(y, \widetilde{x})\}.
\endeq
Here $\Gamma(y,\widetilde{x}) \subset \comp^{2n}_{x,\widetilde{y}}$ is a good contour for the $C^{\infty}$ function $(x,\widetilde{y}) \mapsto {\rm Re}\,\varphi(y,\widetilde{x};x,\widetilde{y})$ described in {\rm (\ref{eq2.28})}, see also Proposition {\rm \ref{prop2}}.
\end{enumerate}
\end{prop}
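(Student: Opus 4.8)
The plan is to establish all three assertions by a single second-order Taylor computation of $G_z$ at the point $(z,\overline z,z,\overline z)$, followed by two elementary contour restrictions that reduce everything to Proposition \ref{prop1}, Proposition \ref{prop2}, and the defining property (\ref{eq2.28}) of a good contour.

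First I would rewrite $G_z$ from (\ref{eq3.12}), (\ref{eq3.12.1}) by inserting and cancelling the terms $\Phi(x)$ and $\Phi(\overline{\widetilde y})$, obtaining
\begin{multline*}
G_z(x,\widetilde x,y,\widetilde y) = \bigl(2\Re\Psi(x,\widetilde y) - \Phi(x) - \Phi(\overline{\widetilde y})\bigr) + \bigl(\Phi(y) + \Phi(\overline{\widetilde y}) - 2\Re\Psi(y,\widetilde y)\bigr) \\ + \bigl(\Phi(x) + \Phi(\overline{\widetilde x}) - 2\Re\Psi(x,\widetilde x)\bigr) - \delta\abs{\widetilde x-\overline z}^2 .
\end{multline*}
Evaluating at $(z,\overline z,z,\overline z)$ and using $\Psi(z,\overline z)=\Phi(z)$ from (\ref{eq2.2}) gives critical value $0$. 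Each of the three parenthesised groups is of the form $\pm\bigl(\Phi(\cdot)+\Phi(\cdot)-2\Re\Psi(\cdot,\cdot)\bigr)$ with the arguments evaluated on the anti-diagonal at the base point, so the basic Taylor identity (\ref{eq2.10}) applies verbatim to each. Writing $x=z+\xi$, $\widetilde x=\overline z+\eta$, $y=z+\zeta$, $\widetilde y=\overline z+\omega$, this identity shows that no group produces a linear term — hence $(z,\overline z,z,\overline z)$ is indeed a critical point — and it gives the quadratic part of $G_z$ there as
\[
-\Phi''_{x\overline x}(z)(\omega-\overline\xi)\cdot(\overline\omega-\xi) + \Phi''_{x\overline x}(z)(\omega-\overline\zeta)\cdot(\overline\omega-\zeta) + \Phi''_{x\overline x}(z)(\eta-\overline\xi)\cdot(\overline\eta-\xi) - \delta\abs{\eta}^2 .
\]
The real-linear substitution $(\xi,\eta,\zeta,\omega)\mapsto(u_1,u_2,u_3,\eta)$ with $u_1=\omega-\overline\xi$, $u_2=\omega-\overline\zeta$, $u_3=\eta-\overline\xi$ is a linear isomorphism of $\comp^{4n}$, and in these coordinates the quadratic part becomes the block-diagonal form $-\Phi''_{x\overline x}(z)u_1\cdot\overline{u_1} + \Phi''_{x\overline x}(z)u_2\cdot\overline{u_2} + \Phi''_{x\overline x}(z)u_3\cdot\overline{u_3} - \delta\abs{\eta}^2$. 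Since $\Phi''_{x\overline x}(z)v\cdot\overline v\asymp\abs{v}^2$ on $\comp^n$ by the strict plurisubharmonicity (\ref{eq2.1}), for $\delta>0$ small this is non-degenerate of signature $(0,2n)+(2n,0)+(2n,0)+(0,2n)=(4n,4n)$, as claimed.

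For the product contour (\ref{eq3.12.2}) I would restrict $G_z$ to $\widetilde x=\overline x$, $\widetilde y=\overline y$. Using $\Re\Psi(w,\overline w)=\Phi(w)$ and $F_z(\overline x)=\Phi(x)-\delta\abs{x-z}^2$, the restriction collapses to $-\bigl(\Phi(x)+\Phi(y)-2\Re\Psi(x,\overline y)\bigr)-\delta\abs{x-z}^2$, which by Proposition \ref{prop1} (see (\ref{eq2.4})) is $\asymp -\abs{x-y}^2-\delta\abs{x-z}^2\asymp-{\rm dist}\bigl((x,y),(z,z)\bigr)^2$; hence $G_z$ has negative definite Hessian along this $4n$-dimensional contour, which therefore is good.

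For the composed contour (\ref{eq3.12.3}) I would use the second expression for $G_z$ in (\ref{eq3.12}). On that contour $\widetilde x=\overline y$ and $(x,\widetilde y)\in\Gamma(y,\overline y)$, and with $F_z(\overline y)=\Phi(y)-\delta\abs{y-z}^2$ and $\Re\Psi(y,\overline y)=\Phi(y)$ the restriction equals $2\Re\varphi(y,\overline y;x,\widetilde y)-\delta\abs{y-z}^2$. Along $\Gamma(y,\overline y)$ the good-contour inequality (\ref{eq2.28}) gives $\Re\varphi(y,\overline y;x,\widetilde y)\leq -\tfrac1C{\rm dist}\bigl((x,\widetilde y),(y,\overline y)\bigr)^2$, so the restriction is $\leq -\tfrac2C{\rm dist}\bigl((x,\widetilde y),(y,\overline y)\bigr)^2-\delta\abs{y-z}^2$. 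Parametrising the composed contour near $(z,\overline z,z,\overline z)$ by $y$ together with a parameter $w$ running over $\Gamma(y,\overline y)$ near its base point $(y,\overline y)$ — for instance the affine model (\ref{eq2.29}) for a concrete choice, or any good contour in general — one has ${\rm dist}\bigl((x,\widetilde y),(y,\overline y)\bigr)^2+\abs{y-z}^2\asymp\abs{w}^2+\abs{y-z}^2\asymp{\rm dist}\bigl((x,\widetilde x,y,\widetilde y),(z,\overline z,z,\overline z)\bigr)^2$, so the restriction of $G_z$ is $\leq -\tfrac1C{\rm dist}(\cdot,(z,\overline z,z,\overline z))^2$ and its Hessian along the contour is negative definite; the contour is good.

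The only points requiring genuine care are internal to the second-order expansion: one must check that every contribution involving an anti-holomorphic derivative of $\Psi$ drops out at the base point, which is precisely the content of (\ref{eq2.3})--(\ref{eq2.3.3}) and is already packaged into the identity (\ref{eq2.10}); and, in the composed-contour step, one must check that $\abs{w}$ together with $\abs{y-z}$ genuinely controls the distance to the critical point along the contour, which holds because the composed contour is a graph over these $4n$ real parameters. I expect the signature count for the quadratic form to be the crux; once it is in place, both contour statements follow essentially immediately from Propositions \ref{prop1} and \ref{prop2} and (\ref{eq2.28}).
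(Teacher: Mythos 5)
Your argument is correct and follows the same overall architecture as the paper's proof: a Taylor expansion at $(z,\overline z,z,\overline z)$ for the critical-point claim, Proposition \ref{prop1} for the product contour, and the second expression in (\ref{eq3.12}) together with (\ref{eq2.28}) for the composed contour. The one place where you genuinely diverge is the non-degeneracy and signature of the critical point. The paper only records the first-order expansion (so that $(z,\overline z,z,\overline z)$ is critical with critical value $0$) and then obtains the signature $(4n,4n)$ \emph{indirectly}: the product contour is a $4n$-dimensional submanifold along which the Hessian is negative definite, and since the quadratic part of $G_z$ is plurisubharmonic on $\comp^{4n}$, these two facts force non-degeneracy and the signature $(4n,4n)$. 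You instead compute the full quadratic part via (\ref{eq2.10}) applied to each of your three groups and diagonalize it by the real-linear substitution $(u_1,u_2,u_3,\eta)$, reading the signature off directly. Your route is more explicit and self-contained (it does not need the plurisubharmonicity of the quadratic part, only Sylvester's law and (\ref{eq2.1})), at the cost of a slightly longer computation; the paper's route is softer and reuses the product-contour estimate it needs anyway. You also spell out the verification that the composed contour is good (including the equivalence of ${\rm dist}((x,\widetilde y),(y,\overline y))^2+\abs{y-z}^2$ with the squared distance to the critical point along the contour), whereas the paper delegates this step to Proposition 4.2 of~\cite{DelHiSj}; your version is a faithful rendering of that argument. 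One cosmetic remark: the non-degeneracy of your block-diagonal form holds for every $\delta>0$, so the smallness of $\delta$ is not actually needed at that particular step.
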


\begin{proof}
We have the Taylor expansions at $(x,\widetilde{x},y,\widetilde{y}) = (z,\overline{z},z,\overline{z})\in \comp^{4n}$,
\begin{multline}
\label{T1}
2{\rm Re}\,\Psi(x,\widetilde{y}) - 2{\rm Re}\,\Psi(y,\widetilde{y})= 2\Phi(z) + 2{\rm Re}\,\left(\Phi'_x(z)\cdot (x-z) + \Phi'_{\overline{x}}(z)\cdot (\widetilde{y} - \overline{z})\right) \\
- 2\Phi(z) - 2{\rm Re}\,\left(\Phi'_x(z)\cdot (y-z) + \Phi'_{\overline{x}}(z)\cdot (\widetilde{y} - \overline{z})\right) + {\cal O}((x-z,\widetilde{y}-\overline{z},y-z)^2)
\\ = 2{\rm Re}\,\left(\Phi'_x(z)\cdot (x-z) - \Phi'_x(z)\cdot (y-z)\right) + {\cal O}((x-z,\widetilde{y}-\overline{z},y-z)^2),
\end{multline}
\begeq
\label{T2}
\Phi(y) + F_z(\widetilde{x}) = 2\Phi(z) + 2{\rm Re}\,\left(\Phi'_x(z)\cdot (y-z) + \Phi'_x(z)\cdot (\overline{\widetilde{x}}-z)\right) + {\cal O}((y-z,\widetilde{x}-\overline{z})^2),
\endeq
\begeq
\label{T3}
2{\rm Re}\,\Psi(x,\widetilde{x}) = 2\Phi(z) + 2{\rm Re}\, \left(\Phi'_x(z)\cdot (x-z) + \Phi'_{\overline{x}}(z)\cdot (\widetilde{x} - \overline{z})\right) + {\cal O}((x-z,\widetilde{x}-\overline{z})^2).
\endeq
Here we have also used the almost holomorphy of $\Psi$. We get, combining (\ref{T1}), (\ref{T2}), (\ref{T3}) and using (\ref{eq3.12}),
\begeq
\label{T4}
G_z(x,\widetilde{x},y,\widetilde{y}) = {\cal O}\left({\rm dist}\left((x,\widetilde{x},y,\widetilde{y}),(z,\overline{z},z,\overline{z})\right)^2\right).
\endeq
The point $(z,\overline{z},z,\overline{z})$ is therefore a critical point of $G_z$ with the critical value $0$. When showing that it is non-degenerate of signature $(4n,4n)$, we observe that in view of Proposition \ref{prop1}, we have, using the expression for $G_z$ on the first line of (\ref{eq3.12}),
\begeq
\label{eq3.13}
G_{z}(x,\overline{x},y,\overline{y}) \leq -\frac{1}{C}\abs{y-x}^2 - \delta\abs{x-z}^2 \leq -\frac{1}{C}\abs{x-z}^2 - \frac{1}{C}\abs{y-z}^2.
\endeq
The contour (\ref{eq3.12.2}) is therefore good for $G_z$, and using also the fact that the quadratic part of the Taylor expansion of $G_z$ at the point $(z,\overline{z},z,\overline{z})$ is a plurisubharmonic quadratic form on $\comp^{4n}$, we conclude that $(z,\overline{z},z,\overline{z})$ is a non-degenerate critical point of $G_z$, of signature $(4n,4n)$. The verification of the fact that the contour (\ref{eq3.12.3}) is also good for $G_z$ is performed exactly as in the proof of Proposition 4.2 of~\cite{DelHiSj}, using the expression for $G_z$ given on the second line of (\ref{eq3.12}). The proof is complete.
\end{proof}

\bigskip
\noindent
We can now carry out the contour deformation argument for the scalar product $(\widetilde{\Pi}_{V_2}u,v_z)$, alluded to above.

\begin{prop}\label{prop:repr-F_z}
Let $v_z$ be of the form {\rm (\ref{eq3.6.5})}, {\rm (\ref{eq3.6.6})}. There exists an open neighborhood $W_1\Subset V_1$ of $x_0$ such that for all $u\in H_{\Phi}(V)$, we have,
\begeq
\label{eq3.17}
(\widetilde{\Pi}_{V_2} u, v_z)_{L^2_{\Phi}(V_1)} = (u,v_z)_{H_{\Phi}(V_1)} + {\cal O}(h^{\infty}) \norm{u}_{H_{\Phi}(V)} \abs{v(z)} e^{-\Phi(z)/h},
\endeq
uniformly in $z\in W_1$.
\end{prop}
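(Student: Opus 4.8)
The plan is, following~\cite[Section 4]{DelHiSj}, to write $(\widetilde{\Pi}_{V_2}u,v_z)_{L^2_{\Phi}(V_1)}$ as a $4n$--dimensional contour integral in $\comp^{4n}_{x,\widetilde{x},y,\widetilde{y}}$, to deform the contour to the composed contour of Proposition~\ref{good_contours}, and then to recognize the inner integration as an instance of the asymptotic inversion of Theorem~\ref{theo_amplitude}. First I would polarize $\overline{v_z(x)}\,e^{-2\Phi(x)/h}$: since $v_z\in{\rm Hol}(V)$, there is a holomorphic function $\breve{v}_z$ near $\rho(V)$ with $\breve{v}_z(\overline{x}) = \overline{v_z(x)}$, and using $\Psi(x,\overline{x}) = \Phi(x)$ we have $\overline{v_z(x)}\,e^{-2\Phi(x)/h} = \big(\breve{v}_z(\widetilde{x})\,e^{-\frac{2}{h}\Psi(x,\widetilde{x})}\big)\big|_{\widetilde{x}=\overline{x}}$. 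Inserting this together with the polarized form (\ref{eq3.6.3}) of $\widetilde{\Pi}_{V_2}u$ into (\ref{eq3.5}) and writing $L(dx)$ along $\Gamma_{V_1} = \{\widetilde{x} = \overline{x}\}$ as a dimensional multiple $c_n$ of $dx\wedge d\widetilde{x}$, I obtain
\[
(\widetilde{\Pi}_{V_2}u,v_z)_{L^2_{\Phi}(V_1)} = \frac{c_n}{h^{n}}\int\!\!\!\int_{\Gamma_{V_1}\times\Gamma_{V_2}} e^{\frac{2}{h}\left(\Psi(x,\widetilde{y}) - \Psi(y,\widetilde{y}) - \Psi(x,\widetilde{x})\right)} a(x,\widetilde{y};h)\,u(y)\,\breve{v}_z(\widetilde{x})\,dx\wedge d\widetilde{x}\wedge dy\wedge d\widetilde{y}.
\]
The bound (\ref{eq3.6.6}) gives $\abs{\breve{v}_z(\widetilde{x})} \leq {\cal O}(h^{-n})\abs{v(z)}e^{-\Phi(z)/h}e^{F_z(\widetilde{x})/h}$, with $F_z$ as in (\ref{eq3.12.1}); hence the modulus of the integrand is controlled by ${\cal O}(h^{-N})\norm{u}_{H_{\Phi}(V)}\abs{v(z)}e^{-\Phi(z)/h}\,e^{\frac{1}{h}G_z(x,\widetilde{x},y,\widetilde{y})}$ with $G_z$ the function of Proposition~\ref{good_contours}, and $2{\rm Re}$ of the exponent above, restricted to $\Gamma_{V_1}\times\Gamma_{V_2}$, is precisely $G_z$.

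Next I would deform $\Gamma_{V_1}\times\Gamma_{V_2}$ to the composed contour (\ref{eq3.12.3}), so that $\widetilde{x} = \overline{y}$ and $(x,\widetilde{y})\in\Gamma(y,\overline{y})$. By Proposition~\ref{good_contours} both are good contours for $G_z$ through the common critical point $(z,\overline{z},z,\overline{z})$, depending smoothly on $z$; proceeding exactly as in the proof of Proposition~\ref{prop_contour}, I would join them by a homotopy through uniformly good contours, apply Stokes' formula to the $(2n,0)$--form $e^{\frac{2}{h}(\,\cdot\,)}a\,u\,\breve{v}_z\,dx\wedge d\widetilde{x}\wedge dy\wedge d\widetilde{y}$ over the resulting $(4n+1)$--chain, and estimate the boundary terms. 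The holomorphic factors $u(y)$ and $\breve{v}_z(\widetilde{x})$ contribute no $\overline{\partial}$; the $\overline{\partial}$ falls only on $a$ and on the $\Psi$--exponential, producing, by (\ref{eq2.71}) and (\ref{eq2.3}), factors vanishing to infinite order in ${\rm dist}\big((x,\widetilde{x},y,\widetilde{y}),(z,\overline{z},z,\overline{z})\big)$ and in $h$, which together with the good--contour inequality $e^{\frac{1}{h}G_z}\leq e^{-\frac{1}{Ch}{\rm dist}^2}$ along the homotopy makes the whole correction ${\cal O}(h^{\infty})\norm{u}_{H_{\Phi}(V)}\abs{v(z)}e^{-\Phi(z)/h}$; the lateral boundaries of the bounded contours contribute an extra ${\cal O}(e^{-1/Ch})(\cdots)$, since there ${\rm Re}$ of the exponent is $\leq -1/C$. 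This $\overline{\partial}$--correction term, absent in the analytic case of~\cite{DelHiSj}, is the part I expect to require the most care, in particular checking that all the estimates are uniform in $z$ and in the homotopy parameter, which forces $W_1$ to be chosen small.

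On the composed contour, $\widetilde{x} = \overline{y}$ and, by (\ref{eq2.12}), $\Psi(x,\widetilde{y}) - \Psi(y,\widetilde{y}) - \Psi(x,\overline{y}) = \varphi(y,\overline{y};x,\widetilde{y}) - \Phi(y)$, so the inner integration over $(x,\widetilde{y})\in\Gamma(y,\overline{y})$ equals $h^{n}e^{-2\Phi(y)/h}(A_{\Gamma}a)(y,\overline{y};h) = h^{n}e^{-2\Phi(y)/h}(1 + {\cal O}(h^{\infty}))$ by (\ref{eq2.72}). Using $\breve{v}_z(\overline{y}) = \overline{v_z(y)}$, what remains is $\int u(y)\,\overline{v_z(y)}\,e^{-2\Phi(y)/h}(1 + {\cal O}(h^{\infty}))\,L(dy)$ over a neighbourhood of $x_0$. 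Finally, the sub--mean--value inequality for holomorphic functions gives $\abs{u(y)}\leq {\cal O}(h^{-n})\norm{u}_{H_{\Phi}(V)}e^{\Phi(y)/h}$ on $V_1$, so combined with (\ref{eq3.6.6}) one gets $\abs{u(y)\overline{v_z(y)}e^{-2\Phi(y)/h}}\leq {\cal O}(h^{-2n})\norm{u}_{H_{\Phi}(V)}\abs{v(z)}e^{-\Phi(z)/h}e^{-\delta\abs{y-z}^2/h}$. Since $z\in W_1\Subset V_1$, this lets me replace the domain of integration by $V_1$ modulo ${\cal O}(e^{-1/Ch})(\cdots)$ and absorb the ${\cal O}(h^{\infty})$ error, the Gaussian contributing an extra ${\cal O}(h^{n})$. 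As $v_z\in H_{\Phi}(V_1)$, the main term is exactly $(u,v_z)_{H_{\Phi}(V_1)}$ — the normalization being forced by the relation $A_{\Gamma}a = 1 + {\cal O}(h^{\infty})$ of Theorem~\ref{theo_amplitude} — and this yields (\ref{eq3.17}), uniformly in $z\in W_1$.
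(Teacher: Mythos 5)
Your argument is correct and follows essentially the same route as the paper's proof: the same polarization of $\overline{v_z(x)}\,e^{-2\Phi(x)/h}$ via a holomorphic $v_z^*(\widetilde{x})$, the same $4n$--dimensional contour integral deformed from $\Gamma_{V_1}\times\Gamma_{V_2}$ to the composed contour of Proposition \ref{good_contours} by Stokes' formula through a homotopy of good contours, the same ${\cal O}(h^{\infty})$ control of the $\overline{\partial}$--correction via (\ref{eq2.3}) and (\ref{eq2.71}), and the same identification of the inner integral with $(A_{\Gamma}a)(y,\overline{y};h) = 1 + {\cal O}(h^{\infty})$ from Theorem \ref{theo_amplitude}. (One phrasing quibble: $2\,{\rm Re}$ of the exponent on $\Gamma_{V_1}\times\Gamma_{V_2}$ equals $G_z - \Phi(y) - F_z(\widetilde{x})$ rather than $G_z$ itself; $G_z$ arises only after the factors $\abs{u(y)}$ and $\abs{v_z^*(\widetilde{x})}$ are bounded, exactly as your preceding clause correctly asserts.)
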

\begin{proof}
Writing the Lebesgue measure on $\comp^n$ in the form $L(dx) = C_n dx\, d\overline{x}$, let us express the scalar product in the space $H_{\Phi}(V_1)$ in the polarized form,
\begeq
\label{eq3.18}
(f,g)_{H_{\Phi}(V_1)} = \int_{V_1} f(x) \overline{g(x)} e^{-\frac{2\Phi(x)}{h}}\, L(dx) = C_n  \int\!\!\!\int_{\Gamma_{V_1}} f(x) g^*(\widetilde{x}) e^{-\frac{2}{h}\Psi(x,\widetilde{x})}\, dx\, d\widetilde{x}.
\endeq
Here the contour $\Gamma_{V_1}$ is defined as in (\ref{eq3.2}) and we have also set
\begeq
\label{eq3.19}
g^*(\widetilde{x}) = \overline{g(\overline{\widetilde{x}})} \in H_{\widehat{\Phi}}(\rho(V_1)), \quad \widehat{\Phi}(\widetilde{x}) = \Phi(\overline{\widetilde{x}}).
\endeq
In view of (\ref{eq3.6.3}) and (\ref{eq3.18}), we may write 
\begin{multline}
\label{eq3.20}
(\widetilde{\Pi}_{V_2} u, v_z)_{L^2_{\Phi}(V_1)} \\
= \frac{C_n}{h^n} \int\!\!\!\int_{\Gamma_{V_1}} \left(\int\!\!\!\int_{\Gamma_{V_2}} e^{\frac{2}{h}(\Psi(x,\widetilde{y}) - \Psi(y,\widetilde{y}))} a(x,\widetilde{y};h) u(y)\, dy\, d\widetilde{y}\right) v_z^*(\widetilde{x}) e^{-\frac{2}{h}\Psi(x,\widetilde{x})}\, dx\,d\widetilde{x} \\
= \int\!\!\!\int\!\!\!\int\!\!\!\int_{\Gamma_{V_1}\times \Gamma_{V_2}} \omega.
\end{multline}
Here $\omega$ is the $(4n,0)$--differential form on $\comp^{4n}$ of the form
\begeq
\label{eq3.20.1}
\omega = f(x,\widetilde{x},y,\widetilde{y})\, dx\wedge \,d\widetilde{x}\wedge \,dy\wedge \,d\widetilde{y},
\endeq
where $f\in C^{\infty}(V\times \rho(V) \times V \times \rho(V))$ is given by
\begeq
\label{eq3.20.2}
f(x,\widetilde{x},y,\widetilde{y}) = \frac{C_n}{h^n} e^{\frac{2}{h}(\Psi(x,\widetilde{y}) - \Psi(y,\widetilde{y}))}  a(x,\widetilde{y};h) u(y) v_z^*(\widetilde{x}) e^{-\frac{2}{h}\Psi(x,\widetilde{x})}.
\endeq
When estimating $f$, we notice first, in view of (\ref{eq3.6.5}), (\ref{eq3.6.6}),
\begeq
\label{eq3.21}
\abs{v_z^*(\widetilde{x})} \leq \frac{{\cal O}(1)}{h^n} \abs{v(z)} e^{-\Phi(z)/h} e^{F_z(\widetilde{x})/h}, \quad \widetilde{x} \in \rho(V_1),
\endeq
where $F_z$ is the strictly plurisubharmonic function in $\rho(V_1)$ given in (\ref{eq3.12.1}). Combining (\ref{eq3.21}) with~\cite[Proposition 2.3]{DelHiSj}, we get
\begeq
\label{eq3.23}
\abs{f(x,\widetilde{x},y,\widetilde{y})} \leq \frac{{\cal O}(1)}{h^{3n}} \norm{u}_{H_{\Phi}(V)} \abs{v(z)} e^{-\Phi(z)/h} e^{G_z(x,\widetilde{x}, y,\widetilde{y})/h},
\endeq
for $(x,\widetilde{x}, y,\widetilde{y}) \in V_1 \times \rho(V_1) \times V_2 \times \rho(V_2)$, with $G_z(x,\widetilde{x}, y,\widetilde{y})$ given in (\ref{eq3.12}). Proposition \ref{good_contours} tells us that the contour $\Gamma_1:= \Gamma_{V_1}\times \Gamma_{V_2}$ and the composed contour $\Gamma_2$ defined in (\ref{eq3.12.3}) are both good for $G_z$, and as reviewed in the proof of Proposition \ref{prop_contour}, there exists therefore a $C^{\infty}$ homotopy between the contours $\Gamma_1$, $\Gamma_2$, well defined for all $z$ in a small neighborhood of $x_0$, passing through good contours only, uniformly for $z$ close enough to $x_0$. Let $\Sigma \subset \comp^{4n}$ be the $(4n+1)$--dimensional contour of integration naturally associated to the homotopy above. We have, with a suitable orientation,
\begeq
\label{eq3.23.1}
\partial \Sigma - (\Gamma_1 - \Gamma_2) \subset \left\{(x,\widetilde{x},y,\widetilde{y});\, G_z(x,\widetilde{x},y,\widetilde{y}) \leq
-\frac{1}{{\cal O}(1)}\right\},
\endeq
uniformly for all $z$ in a small neighborhood of $x_0$. We may write therefore, using Stokes' formula, (\ref{eq3.23}), and (\ref{eq3.23.1}), for all $z$ in a small neighborhood of $x_0$,
\begin{multline}
\label{eq3.24}
\int\!\!\!\int\!\!\!\int\!\!\!\int_{\Gamma_1} \omega - \int\!\!\!\int\!\!\!\int\!\!\!\int_{\Gamma_2} \omega = \int\!\!\!\int\!\!\!\int\!\!\!\int\!\!\!\int_{\Sigma} d\omega + {\cal O}(1) \norm{u}_{H_{\Phi}(V)} \abs{v(z)} e^{-\Phi(z)/h} e^{-1/Ch} \\
= \int\!\!\!\int\!\!\!\int\!\!\!\int\!\!\!\int_{\Sigma} \overline{\partial} f \wedge dx\wedge d\widetilde{x}\wedge dy \wedge d\widetilde{y} +
{\cal O}(1) \norm{u}_{H_{\Phi}(V)} \abs{v(z)} e^{-\Phi(z)/h} e^{-1/Ch}.
\end{multline}
Using (\ref{eq3.20.2}), we compute
\begin{multline}
\label{eq3.24.1}
\partial_{\overline{x}}f(x,\widetilde{x},y,\widetilde{y}) \\
= \frac{C_n}{h^n} e^{\frac{2}{h}(\Psi(x,\widetilde{y}) - \Psi(y,\widetilde{y}) - \Psi(x,\widetilde{x}))} u(y) v_z^*(\widetilde{x})
\left(\frac{2}{h} \left(\partial_{\overline{x}}\Psi(x,\widetilde{y}) - \partial_{\overline{x}}\Psi(x,\widetilde{x})\right)a
+ \partial_{\overline{x}} a\right),
\end{multline}
and recalling (\ref{eq2.3}), (\ref{eq2.71}), we infer for all $N\in \nat$,
\begin{multline}
\label{eq3.24.2}
\abs{\partial_{\overline{x}}f} \leq \frac{{\cal O}_N(1)}{h^{3n+1}} \norm{u}_{H_{\Phi}(V)} \abs{v(z)} e^{-\Phi(z)/h} e^{G_z(x,\widetilde{x}, y,\widetilde{y})/h}
\left(\abs{\overline{x}-\widetilde{y}}^N + \abs{\overline{x}-\widetilde{x}}^N + h^N\right)\\
\leq \frac{{\cal O}_N(1)}{h^{3n+1}} \norm{u}_{H_{\Phi}(V)} \abs{v(z)} e^{-\frac{\Phi(z)}{h}}
e^{-{\rm dist}\left((x,\widetilde{x},y,\widetilde{y}),(z,\overline{z},z,\overline{z})\right)^2/Ch} \left(\abs{\overline{x}-\widetilde{y}}^N + \abs{\overline{x}-\widetilde{x}}^N + h^N\right).
\end{multline}
Here we write
\begeq
\label{eq3.24.3}
\abs{\overline{x} - \widetilde{y}}^N \leq \left(\abs{x-z}+ \abs{\widetilde{y} - \overline{z}}\right)^N
\leq {\cal O}_N(1)\left({\rm dist}\left((x,\widetilde{x},y,\widetilde{y}),(z,\overline{z},z,\overline{z}\right)\right)^N,
\endeq
and similarly,
\begeq
\label{eq3.24.4}
\abs{\overline{x} - \widetilde{x}}^N \leq {\cal O}_N(1)\left({\rm dist}\left((x,\widetilde{x},y,\widetilde{y}),(z,\overline{z},z,\overline{z}\right)\right)^N.
\endeq
We get therefore, combining (\ref{eq3.24.2}), (\ref{eq3.24.3}), (\ref{eq3.24.4}),
\begeq
\label{eq3.24.5}
\abs{\partial_{\overline{x}}f(x,\widetilde{x},y,\widetilde{y})} \leq {\cal O}(h^{\infty}) \norm{u}_{H_{\Phi}(V)} \abs{v(z)} e^{-\frac{\Phi(z)}{h}}.
\endeq
Similar computations and estimates show that the bound (\ref{eq3.24.5}) is also valid for $\partial_{\overline{\widetilde{x}}}f$, $\partial_{\overline{y}}f$, and  $\partial_{\overline{\widetilde{y}}}f$. We conclude that there exists a small open neighborhood $W_1 \Subset V_1$ of $x_0$ such that for all $z\in W_1$, the right hand side of (\ref{eq3.24}) is of the form
\begeq
\label{eq3.24.6}
{\cal O}(h^{\infty}) \norm{u}_{H_{\Phi}(V)} \abs{v(z)} e^{-\frac{\Phi(z)}{h}}.
\endeq
It follows therefore from (\ref{eq3.20}), (\ref{eq3.24}), and (\ref{eq3.24.6}) that for all $z\in W_1$, the scalar product $(\widetilde{\Pi}_{V_2}u,v_z)_{L^2_{\Phi}(V_1)}$ is equal to
\begin{multline}
\label{eq3.25}
C_n \int\!\!\!\int_{\Gamma_{V_1}} \left(\frac{1}{h^n} \int\!\!\!\int_{\Gamma(y,\widetilde{x})\cap (V_1\times \rho(V_1))} e^{\frac{2}{h} \varphi(y,\widetilde{x}; x,\widetilde{y})} a(x,\widetilde{y};h) dx\, d\widetilde{y}\right) u(y) v_{z}^*(\widetilde{x}) e^{-\frac{2}{h} \Psi(y,\widetilde{x})} dy\, d\widetilde{x} \\
+ {\cal O}(h^{\infty}) \norm{u}_{H_{\Phi}(V)} \abs{v(z)} e^{-\frac{\Phi(z)}{h}}.
\end{multline}
Here in the contour of integration in the inner integral we have $(y,\widetilde{x})\in \Gamma_{V_1} \Longleftrightarrow \widetilde{x} = \overline{y}$, $y\in V_1$, and by Theorem \ref{theo_amplitude} we obtain therefore that the inner integral is equal to $1 + {\cal O}(h^{\infty})$, provided that the neighborhood $V_1$ is small enough. The integral (\ref{eq3.25}) is therefore equal to
\begeq
\label{eq3.27}
(u,v_z)_{H_{\Phi}(V_1)} + {\cal O}(h^{\infty}) \norm{u}_{H_{\Phi}(V)} \abs{v(z)} e^{-\frac{\Phi(z)}{h}},
\endeq
uniformly for $z\in W_1$. The proof is complete.
\end{proof}

\bigskip
\noindent
We can now finish the proof of Theorem \ref{reproducing_scalar_product} as in~\cite[Section 4]{DelHiSj}, letting $W\Subset W_1$, where $W_1$ is as in Proposition \ref{prop:repr-F_z}. We get, in view of (\ref{eq3.9}) and (\ref{eq3.17}),
\begeq
\label{eq3.28}
(\widetilde{\Pi}_{V} u, v)_{L^2_{\Phi}(V)} = \int_{W_1} (u,v_z)_{H_{\Phi}(V_1)}\, dz\,d\overline{z} + {\cal O}(h^{\infty}) \norm{u}_{H_{\Phi}(V)} \norm{v}_{H_{\Phi_1}(V)}.
\endeq
Using (\ref{eq3.7}), we can also write
\begeq
\label{eq3.29}
(u, v)_{H_{\Phi}(V)} = \int_{W_1} (u,v_z)_{H_{\Phi}(V_1)}\, dz\,d\overline{z} + {\cal O}(1)e^{-\frac{1}{C h}} \norm{u}_{H_{\Phi}(V)} \norm{v}_{H_{\Phi_1}(V)}.
\endeq
The proof of Theorem \ref{reproducing_scalar_product} is complete.

\section{Completing the proof of Theorem \ref{Theorem1}}
\label{sec:end-proof}
\setcounter{equation}{0}
Let us first pass from the scalar products in Theorem \ref{reproducing_scalar_product} to weighted $L^2$ norm estimates. This will be done similarly to~\cite[Section 5]{DelHiSj}, with appropriate modifications to accommodate the fact that the operator $\widetilde{\Pi}_V$ in (\ref{eq3.1}) does not quite produce  holomorphic functions. To this end, let $0\leq \chi_1\in C^{\infty}(\Omega;\real)$ be such that $\chi_1 > 0$ on $\Omega\backslash\overline{W}$, where $W\Subset V$ is as in Theorem \ref{reproducing_scalar_product}, and let us set
\begeq
\label{eq4.1}
\Phi_1(x) = \Phi(x) - \delta \raisebox{.1 em}{$\chi_1(x)$},
\endeq
for $\delta >0 $ small enough. In particular, $\Phi_1$ is strictly plurisubharmonic in $V$,
\begeq
\label{eq4.2}
\sum_{j,k=1}^n \frac{\partial^2 \Phi_1}{\partial x_j \partial \overline{x}_k}(x) \xi_j \overline{\xi}_k \geq \frac{\abs{\xi}^2}{{\cal O}(1)}, \quad x\in V, \quad \xi \in \comp^n.
\endeq
In what follows, without loss of generality, we shall assume that the bounded open set $V$ is convex, and we may even take it to be an open ball centered at $x_0$.

\medskip
\noindent
Using Proposition \ref{prop1} together with the Schur test, we obtain
\begeq
\label{eq4.3}
\widetilde{\Pi}_V = {\cal O}(1): L^2_{\Phi_1}(V) \rightarrow L^2_{\Phi_2}(V).
\endeq
Here $\Phi_2 = \Phi - \chi_2$, where $0\leq \chi_2 \in C(\Omega;\real)$ is given by
\begeq
\label{eq4.3.1}
\chi_2(x) = \inf_{y\in V_0} \left(\frac{\abs{x-y}^2}{2C} + \delta \chi_1(y)\right),
\endeq
with $C>0$ and $V_0$ being an open ball centered at $x_0$ such that $V \Subset V_0 \Subset \Omega$. In particular, we see that
\begeq
\label{eq4.4}
\Phi_2 \leq \Phi \,\, \wrtext{in}\,\, \Omega, \quad \Phi_2 < \Phi \,\, \wrtext{on}\,\, \Omega\setminus \overline{W},
\endeq
and
\begeq
\label{eq4.5}
\Phi_1 \leq \Phi_2 \,\, \wrtext{in}\,\, V.
\endeq
Let us now take a closer look at the function $\chi_2$ in (\ref{eq4.3.1}). When doing so, let us observe first that the infimum in (\ref{eq4.3.1}) is attained at a unique point in $\overline{V_0}$, in view of the strict convexity of the function
$\displaystyle \overline{V_0} \ni y \mapsto F_x(y) := \frac{\abs{x-y}^2}{2C} + \delta \chi_1(y)$, for $\delta > 0$ small enough. On the other hand, when $x\in V$, the function $F_x(y)$ has a unique critical point $y_c(x)$ in $V_0$, for $\delta > 0$ small enough, which is a non-degenerate local minimum. Indeed, we have
$$
F'_x(y) = 0 \Leftrightarrow CF'_0(y) = x,
$$
and the map $V_0 \ni y \mapsto CF'_0(y) = y + C\delta \chi'_1(y)$ is a $C^{\infty}$ diffeomorphism from $V_0$ onto its image, which contains the open ball $V$ as a relatively compact subset, for $\delta > 0$ small enough. We have
\begeq
\label{eq4.6}
y_c(x) = x  -C\delta \chi'_1(x) + {\cal O}(\delta^2), \quad x\in V,
\endeq
and we conclude therefore that
\begeq
\label{eq4.6.1}
\chi_2(x) = F_x(y_c(x)) = \delta \chi_1(x) - \frac{C}{2}\delta^2 \abs{\chi'_1(x)}^2 + {\cal O}(\delta^3),\quad x\in V.
\endeq
In particular, we have $\norm{\Phi - \Phi_2}_{C^2(\overline{V})} = {\cal O}(\delta)$ is small enough, so that the function $\Phi_2$ is strictly plurisubharmonic in $V$,
\begeq
\label{eq4.7}
\sum_{j,k=1}^n \frac{\partial^2 \Phi_2}{\partial x_j \partial \overline{x}_k}(x) \xi_j \overline{\xi}_k \geq \frac{\abs{\xi}^2}{{\cal O}(1)}, \quad x\in V, \quad \xi \in \comp^n.
\endeq

\medskip
\noindent
Next, we let
\begeq
\label{eq4.8}
\Pi_{\Phi_2}: L^2_{\Phi_2}(V) \rightarrow H_{\Phi_2}(V)
\endeq
be the orthogonal projection. We shall make use of the following observation.
\begin{prop}
\label{prop_orth_proj}
We have
\begeq
\label{eq4.8.1}
\Pi_{\Phi_2} \widetilde{\Pi}_V - \widetilde{\Pi}_V = {\cal O}(h^{\infty}): L^2_{\Phi_1}(V) \rightarrow L^2_{\Phi_2}(V).
\endeq
\end{prop}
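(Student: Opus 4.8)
The plan is to combine the (essentially already available) almost holomorphy of the range of $\widetilde{\Pi}_V$ with the H\"ormander $L^2$ estimates for $\overline{\partial}$ and with the fact that an orthogonal projection is a contraction.

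First I would establish the $\overline{\partial}$--counterpart of the mapping property (\ref{eq4.3}), namely
$$
\overline{\partial}\circ \widetilde{\Pi}_V = {\cal O}(h^{\infty}): L^2_{\Phi_1}(V)\rightarrow L^2_{\Phi_2,(0,1)}(V).
$$
For this one proceeds exactly as in the derivation of (\ref{eq4.3}), i.e.\ by a Schur test based on the Gaussian off-diagonal decay of the kernel of $\widetilde{\Pi}_V$ supplied by Proposition \ref{prop1}, together with the comparison $\chi_2(x)-\delta\chi_1(y)\leq \abs{x-y}^2/(2C)$ and $C$ in (\ref{eq4.3.1}) taken large enough; the only new ingredient is that applying $\partial_{\overline{x}_j}$ to the integrand of $\widetilde{\Pi}_V u$ produces, by the almost holomorphy (\ref{eq2.3}), (\ref{eq2.71}), an extra factor ${\cal O}(\abs{x-y}^{\infty})+{\cal O}(h^{\infty})$, which is absorbed by the Gaussian. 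Note that one cannot simply quote (\ref{eq3.3.0.1}) here, since the inclusions $L^2_{\Phi_1}(V)\hookrightarrow L^2_{\Phi}(V)$ and $L^2_{\Phi_2}(V)\hookrightarrow L^2_{\Phi}(V)$ both go the wrong way to be composed; the estimate must genuinely be redone with the weights $\Phi_1$, $\Phi_2$, and this is the step that requires the most (though still routine) care.

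Next, fix $u\in L^2_{\Phi_1}(V)$ and set $w=\widetilde{\Pi}_V u$. By (\ref{eq4.3}) we have $w\in L^2_{\Phi_2}(V)$ with $\norm{w}_{L^2_{\Phi_2}(V)}\leq {\cal O}(1)\norm{u}_{L^2_{\Phi_1}(V)}$, and $w\in C^{\infty}(V)$ since the kernel of $\widetilde{\Pi}_V$ is smooth. Then $g:=\overline{\partial}w$ is a $\overline{\partial}$--closed $(0,1)$--form on $V$ with $\norm{g}_{L^2_{\Phi_2,(0,1)}(V)}={\cal O}(h^{\infty})\norm{u}_{L^2_{\Phi_1}(V)}$ by the first step. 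Since $V$ is a ball, hence pseudoconvex, and $\Phi_2$ is strictly plurisubharmonic in $V$ with Levi form bounded below by $1/{\cal O}(1)$, see (\ref{eq4.7}), the H\"ormander estimate~\cite{Ho65} applied to the weight $2\Phi_2/h$, whose complex Hessian is $\geq 2/(h\,{\cal O}(1))$, provides $v\in L^2_{\Phi_2}(V)$ with $\overline{\partial}v=g$ and $\norm{v}_{L^2_{\Phi_2}(V)}\leq (h\,{\cal O}(1))^{1/2}\norm{g}_{L^2_{\Phi_2,(0,1)}(V)}$, so that $\norm{v}_{L^2_{\Phi_2}(V)}={\cal O}(h^{\infty})\norm{u}_{L^2_{\Phi_1}(V)}$.

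Finally, $w-v$ is holomorphic in $V$ and lies in $L^2_{\Phi_2}(V)$, hence $w-v\in H_{\Phi_2}(V)$ and $\Pi_{\Phi_2}(w-v)=w-v$. Therefore
$$
\Pi_{\Phi_2}\widetilde{\Pi}_V u-\widetilde{\Pi}_V u=\Pi_{\Phi_2}w-w=\Pi_{\Phi_2}(w-v)+\Pi_{\Phi_2}v-w=\Pi_{\Phi_2}v-v,
$$
and since $\Pi_{\Phi_2}$ is a contraction on $L^2_{\Phi_2}(V)$, we get $\norm{\Pi_{\Phi_2}\widetilde{\Pi}_V u-\widetilde{\Pi}_V u}_{L^2_{\Phi_2}(V)}\leq 2\norm{v}_{L^2_{\Phi_2}(V)}={\cal O}(h^{\infty})\norm{u}_{L^2_{\Phi_1}(V)}$, uniformly in $u$, which is (\ref{eq4.8.1}). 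The only real obstacle is the weighted $\overline{\partial}$--estimate of the first step; the rest is a short formal manipulation.
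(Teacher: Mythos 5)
Your proof is correct and follows essentially the same route as the paper: first the weighted bound $\overline{\partial}\circ\widetilde{\Pi}_V={\cal O}(h^{\infty}):L^2_{\Phi_1}(V)\rightarrow L^2_{\Phi_2,(0,1)}(V)$ via the Schur test and almost holomorphy, then H\"ormander's estimate for the strictly plurisubharmonic weight $\Phi_2$ on the convex set $V$. The only cosmetic difference is that the paper directly identifies $(1-\Pi_{\Phi_2})\widetilde{\Pi}_Vf$ as the minimal-norm solution of the $\overline{\partial}$--equation, whereas you reach the same bound through the identity $\Pi_{\Phi_2}w-w=\Pi_{\Phi_2}v-v$ and the contraction property of the projection.
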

\begin{proof}
Using (\ref{eq2.3}), (\ref{eq2.29.1}), (\ref{eq2.30}), and Proposition \ref{prop1} together with the Schur test, we observe that
\begeq
\label{eq4.8.2}
\overline{\partial} \circ \widetilde{\Pi}_V = {\cal O}(h^{\infty}): L^2_{\Phi_1}(V) \rightarrow L^2_{\Phi_2,(0,1)}(V),
\endeq
where the target space in (\ref{eq4.8.2}) is a space of $(0,1)$--forms. Given $f\in L^2_{\Phi_1}(V)$, the solution of the equation
\begeq
\label{eq4.9}
\overline{\partial}u = \overline{\partial} \widetilde{\Pi}_V f
\endeq
of the minimal $L^2_{\Phi_2}(V)$--norm is given by $(1-\Pi_{\Phi_2})\widetilde{\Pi}_{V}f$, and an application of H\"ormander's $L^2$--estimates for the $\overline{\partial}$--operator in the open convex set $V$ and the strictly plurisubharmonic weight $\Phi_2$, \cite[Proposition 4.2.5]{Horm_Conv}, gives that
\begeq
\label{eq4.91}
\norm{(1-\Pi_{\Phi_2})\widetilde{\Pi}_V f}_{L^2_{\Phi_2}(V)} \leq {\cal O}(h^{1/2}) \norm{\overline{\partial} \widetilde{\Pi}_V f}_{L^2_{\Phi_2}(V)} \leq {\cal O}(h^{\infty}) \norm{f}_{L^2_{\Phi_1}(V)}.
\endeq
Here we have also used (\ref{eq4.8.2}). The proof is complete.
\end{proof}

\bigskip
\noindent
It is now easy to derive a suitable estimate for the operator $\widetilde{\Pi}_V -1$, proceeding as in~\cite{DelHiSj}. Let
$u\in H_{\Phi_1}(V)$, where $\Phi_1 \in C^{\infty}(\Omega;\real)$ is given by (\ref{eq4.1}), and let us apply Theorem \ref{reproducing_scalar_product}, with
\begeq
\label{eq4.92}
v = \Pi_{\Phi_2}\left((\widetilde{\Pi}_V -1)u\right) = \Pi_{\Phi_2} \widetilde{\Pi}_V u - u \in H_{\Phi_2}(V)
\endeq
and with $\Phi_2$ in place of $\Phi_1$. Here in the second equality in (\ref{eq4.92}) we have also used (\ref{eq4.5}). We obtain, using also (\ref{eq4.3}),
\begin{multline}
\label{eq4.10}
\abs{\left((\widetilde{\Pi}_V-1)u, \Pi_{\Phi_2}\left((\widetilde{\Pi}_V -1)u\right)\right)_{L^2_{\Phi}(V)}} \leq
{\cal O}(h^{\infty})\norm{u}_{H_{\Phi}(V)} \norm{\Pi_{\Phi_2} \widetilde{\Pi}_V u - u}_{H_{\Phi_2}(V)} \\
\leq {\cal O}(h^{\infty})\norm{u}^2_{H_{\Phi_1}(V)}.
\end{multline}
Next, we write 
\begeq
\label{eq4.11}
\Pi_{\Phi_2}\left((\widetilde{\Pi}_V -1)u\right) = (\widetilde{\Pi}_V -1)u + Ru,
\endeq
where $R = \Pi_{\Phi_2} \widetilde{\Pi}_V - \widetilde{\Pi}_V$. We get, combining (\ref{eq4.10}), (\ref{eq4.11}), and using that
$\Phi_j \leq \Phi$, for $j=1,2$, together with (\ref{eq3.3}),
\begin{multline}
\label{eq4.12}
\norm{(\widetilde{\Pi}_V -1)u}^2_{L^2_{\Phi}(V)} \leq {\cal O}(h^{\infty})\norm{u}^2_{H_{\Phi_1}(V)} +
{\cal O}(1) \norm{u}_{H_{\Phi}(V)} \norm{Ru}_{L^2_{\Phi}(V)} \\
\leq {\cal O}(h^{\infty})\norm{u}^2_{H_{\Phi_1}(V)} +
{\cal O}(1) \norm{u}_{H_{\Phi}(V)} \norm{Ru}_{L^2_{\Phi_2}(V)} \\
\leq {\cal O}(h^{\infty})\norm{u}^2_{H_{\Phi_1}(V)}
+ {\cal O}(h^{\infty}) \norm{u}_{H_{\Phi}(V)} \norm{u}_{H_{\Phi_1}(V)} \leq {\cal O}(h^{\infty})\norm{u}^2_{H_{\Phi_1}(V)}.
\end{multline}
Here in the penultimate inequality we have also used Proposition \ref{prop_orth_proj}. We obtain therefore that
\begeq
\label{eq4.13}
\norm{(\widetilde{\Pi}_V -1)u}_{L^2_{\Phi}(V)} \leq {\cal O}(h^{\infty})\norm{u}_{H_{\Phi_1}(V)},\quad u\in H_{\Phi_1}(V),
\endeq
where $\Phi_1 \in C^{\infty}(\Omega;\real)$ is of the form (\ref{eq4.1}). The bound (\ref{eq4.13}) is completely analogous to the estimate (5.5) in~\cite[Section 5]{DelHiSj}, and we may therefore conclude the proof of Theorem \ref{Theorem1} by repeating the arguments of~\cite[Section 5]{DelHiSj}, which are based on the $\overline{\partial}$--techniques, exactly as they stand. Letting $U \Subset W \Subset V$ be an open neighborhood of $x_0$ with $C^{\infty}$--boundary, we get therefore,
\begeq
\label{eq4.14}
\norm{(\widetilde{\Pi}_V -1)u}_{L^2_{\Phi}(U)} \leq {\cal O}(h^{\infty}) \norm{u}_{H_{\Phi}(V)},\quad u\in H_{\Phi}(V).
\endeq
The proof of Theorem \ref{Theorem1} is complete.

\section{From asymptotic local to global Bergman kernels}
\label{secLG}
\setcounter{equation}{0}
The purpose of this section is to establish a link between the operator $\widetilde{\Pi}_V$ in (\ref{eq1.3}), enjoying the local approximate reproducing property (\ref{eq1.4}), and the orthogonal projection
\begeq
\label{LG1}
\Pi: L^2(\Omega, e^{-2\Phi/h}L(dx)) \rightarrow H_{\Phi}(\Omega).
\endeq
When doing so, we shall follow~\cite{BBSj} closely, where the Bergman projection was considered in the context of high powers of a holomorphic line bundle over a complex compact manifold. The discussion below is therefore essentially well known, and is given here mainly for the completeness and convenience of the reader. See also~\cite{E1},~\cite{E2}.

\medskip
\noindent
We shall assume in what follows that the open set $\Omega \subset \comp^n$ is pseudoconvex. It will also be convenient for us to choose a local polarization $\Psi$ of $\Phi\in C^{\infty}(\Omega)$ satisfying (\ref{eq2.2}), (\ref{eq2.3}), such that the Hermitian
property
\begeq
\label{LG2}
\Psi(x,y) = \overline{\Psi(\overline{y},\overline{x})},\quad (x,y)\in {\rm neigh}((x_0,\overline{x_0}), \comp^{2n}_{x,y})
\endeq
holds. Indeed, if $\Psi(x,y)$ satisfies (\ref{eq2.2}), (\ref{eq2.3}), then so does $\overline{\Psi(\overline{y},\overline{x})}$, and replacing $\Psi(x,y)$ by $\displaystyle (\Psi(x,y)+\overline{\Psi(\overline{y},\overline{x})})/2$, we obtain (\ref{LG2}).

\bigskip
\noindent
Our starting point is the following well known result, allowing us to pass to pointwise estimates from the weighted $L^2$ estimates in Theorem \ref{Theorem1}.
\begin{prop}
\label{Horm_estimate}
Let $V_1\Subset V_2\Subset\Omega$ be open. There exists $C>0$ such that for all $f\in L^2_{\Phi}(V_2)$ satisfying $h\overline{\partial}f\in L^{\infty}(V_2)$ and all $h>0$ small enough, we have
\begeq
\label{LG3}
\abs{f(x)} \leq C\left(\sup_{y\in V_2} \abs{h\overline{\partial}f(y)}e^{-\Phi(y)/h} +h^{-n}\norm{f}_{L^2_\Phi(V_2)}\right)e^{\Phi(x)/h}, \quad x\in V_1.
\endeq
\end{prop}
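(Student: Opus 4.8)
The plan is to turn the weighted $L^2$ bound into a pointwise bound by the standard device: localize to a ball of radius proportional to $h$ — small enough that on it the weight $e^{-2\Phi/h}$ is comparable, with a constant $O(1)$, to the constant $e^{-2\Phi(x)/h}$ — and on such a ball recover $f(x)$ from its $L^2$ size and from $\overline{\partial}f$ by means of the Bochner--Martinelli integral representation. Fix $x\in V_1$. Since $V_1\Subset V_2$, choose $C_0>0$ so large that, setting $\rho=h/C_0$, the ball $B=B(x,\rho)$ satisfies $\overline{B}\subset V_2$ for every $x\in V_1$ and all $h>0$ small enough; then $\mathrm{vol}\,B\asymp h^{2n}$ and, since $\Phi$ is Lipschitz on the compact set $\overline{V_2}$, $\sup_{y\in B}\abs{\Phi(y)-\Phi(x)}\le \norm{\nabla\Phi}_{L^\infty(\overline{V_2})}\,\rho=O(h)$, so that $e^{\pm 2\Phi(y)/h}\asymp e^{\pm 2\Phi(x)/h}$ uniformly for $y\in B$.

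The first step is the weight-free local estimate
$$
\abs{f(x)}\le C\Big(\rho^{-n}\norm{f}_{L^2(B(x,\rho))}+\rho\,\norm{\overline{\partial}f}_{L^\infty(B(x,\rho))}\Big).
$$
To see this, apply the Bochner--Martinelli representation of $f$ on the ball $B(x,t)$, for each $t\le\rho$: $f(x)=T(t)-V(t)$, where $V(t)=\int_{B(x,t)}\overline{\partial}f\wedge\omega_1$ with $\abs{\omega_1(y,x)}\le C\abs{y-x}^{1-2n}$, and $T(t)$ is the boundary integral over $\partial B(x,t)$ of $f$ against a kernel of size $\le Ct^{1-2n}\,dS$. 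The volume term obeys $\abs{V(t)}\le C\norm{\overline{\partial}f}_{L^\infty(B(x,\rho))}\int_{B(x,\rho)}\abs{y-x}^{1-2n}\,L(dy)=C\rho\,\norm{\overline{\partial}f}_{L^\infty(B(x,\rho))}$, uniformly in $t\le\rho$. Averaging the identity over $t\in(\rho/2,\rho)$ and using the coarea formula turns the boundary term into a volume integral: $\frac{2}{\rho}\int_{\rho/2}^{\rho}\abs{T(t)}\,dt\le C\rho^{-2n}\int_{B(x,\rho)}\abs{f}\,L(dy)\le C\rho^{-n}\norm{f}_{L^2(B(x,\rho))}$ by Cauchy--Schwarz. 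Combining the two bounds with $f(x)=\frac{2}{\rho}\int_{\rho/2}^{\rho}(T(t)-V(t))\,dt$ gives the claim.

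It remains to insert the weight. On $B=B(x,\rho)$ with $\rho=h/C_0$, the comparability of the weight gives $\norm{f}_{L^2(B)}\le C e^{\Phi(x)/h}\norm{f}_{L^2_\Phi(V_2)}$ and $\norm{\overline{\partial}f}_{L^\infty(B)}=h^{-1}\norm{h\overline{\partial}f}_{L^\infty(B)}\le C h^{-1}e^{\Phi(x)/h}\sup_{y\in V_2}\abs{h\overline{\partial}f(y)}e^{-\Phi(y)/h}$. Substituting these, together with $\rho^{-n}\asymp h^{-n}$ and $\rho\cdot h^{-1}\asymp 1$, into the local estimate yields exactly \eqref{LG3}. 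The one quantitative point to get right is the choice of radius: it must be $\asymp h$ rather than $\asymp\sqrt h$, precisely so the weight is essentially constant on $B$, and this is why the power $h^{-n}$ appears (in place of the sharper $h^{-n/2}$ one would get by also taking into account the holomorphic part of the Taylor expansion of $\Phi$); the weaker bound $h^{-n}$ is all we need here. The only other mild issue is regularity: the Bochner--Martinelli representation is classical for $f\in C^1(\overline{B})$, and the case $h\overline{\partial}f\in L^\infty$, $f\in L^2_\Phi$ follows by a routine mollification of $f$ on a slightly smaller ball (and in any event, in the application to Theorem~\ref{Theorem1} the function $f$ is $C^\infty$). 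Alternatively, one may split $f=g+u$ on $B$ with $g$ holomorphic and $u$ the solution of $\overline{\partial}u=\overline{\partial}f$ produced by the explicit Cauchy--Bochner--Martinelli solution operator, so that $\norm{u}_{L^\infty(B)}\le C\rho\norm{\overline{\partial}f}_{L^\infty(B)}$, and estimate $\abs{u(x)}$ directly while bounding $\abs{g(x)}$ by the mean value property; this leads to the same conclusion.
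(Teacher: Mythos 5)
Your argument is correct and follows essentially the same route as the paper: localize to a ball of radius comparable to $h$, apply there an unweighted estimate of the form $\abs{f(x)} \leq C\left(\norm{h\overline{\partial}f}_{L^{\infty}} + h^{-n}\norm{f}_{L^2}\right)$, and insert the weight using the local Lipschitz continuity of $\Phi$, which makes $e^{\Phi(y)/h}$ comparable to $e^{\Phi(x)/h}$ on such a ball. The only difference is that the paper simply cites H\"ormander's Lemma 15.1.8 for the local estimate (which also disposes of the regularity issue you mention), whereas you prove it from scratch via the Bochner--Martinelli representation with radial averaging; both are fine.
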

\begin{proof}
Let $h_0 > 0$ be such that $B(x,h_0) = \{y\in \comp^n, \abs{y-x}<h_0\} \subset V_2$, for all $x\in V_1$. An application of~\cite[Lemma 15.1.8]{H_book} gives for all $h\in (0,h_0]$,
\begeq
\label{LG4}
\abs{f(x)} \leq C \left(\sup_{B(x,h)} \abs{h\overline{\partial}f(y)}+ h^{-n} \norm{f}_{L^2(B(x,h))}\right),\quad x\in V_1.
\endeq
Using that
$$
e^{\Phi(y)/h} \leq {\cal O}(1) e^{\Phi(x)/h},\quad y\in B(x,h),
$$
for $x\in V_1$, we obtain (\ref{LG3}).
\end{proof}

\medskip
\noindent
We shall apply Proposition \ref{Horm_estimate} to a function of the form
\begeq
\label{LG5}
f = \widetilde{\Pi}_V u - u,\quad u\in H_{\Phi}(V),
\endeq
satisfying
\begeq
\label{LG6}
\norm{f}_{L^2_{\Phi}(U)} \leq {\cal O}(h^\infty) \norm{u}_{H_{\Phi}(V)},
\endeq
in view of (\ref{eq1.4}). As for the control of $h\overline{\partial} f = h\overline{\partial} \widetilde{\Pi}_V u$ in $L^{\infty}(U)$, we have in view of (\ref{eq2.3}), (\ref{eq2.70}), (\ref{eq2.71}), for $N = 1,2,\ldots\,$,
\begin{multline}
\label{LG7}
\abs{h\overline{\partial} f(x)} \leq {\cal O}_N(1) h^{-n} \int_V e^{\frac{2}{h}{\rm Re}\, \Psi(x,\overline{y})} \abs{x-y}^N \abs{u(y)} e^{-\frac{2}{h}\Phi(y)}\,L(dy)\\
\leq {\cal O}_N(1)e^{\Phi(x)/h} h^{-n} \int_V e^{-\abs{x-y}^2/Ch} \abs{x-y}^N \abs{u(y)} e^{-\Phi(y)/h}\, L(dy) \\
\leq {\cal O}(h^{\frac{N}{2}-n}) e^{\Phi(x)/h}\norm{u}_{H_{\Phi}(V)},\quad x\in U.
\end{multline}
Here we have also used Proposition \ref{prop1} and the Cauchy-Schwarz inequality. Letting $\widetilde{U}\Subset U$ be an open neighborhood of $x_0$, and combining Proposition \ref{Horm_estimate} with (\ref{LG6}), (\ref{LG7}), we get
\begeq
\label{LG8}
\abs{\widetilde{\Pi}_Vu(x) - u(x)} \leq {\cal O}(h^{\infty})\, e^{\Phi(x)/h}\norm{u}_{H_{\Phi}(V)},\quad x\in \widetilde{U}.
\endeq

\medskip
\noindent
We obtain therefore the following approximate local reproducing property,
\begeq
\label{LG9}
u(x) = \frac{1}{h^n} \int_{V} e^{\frac{2}{h}\Psi(x,\overline{y})} a(x,\overline{y};h)  u(y) e^{-\frac{2}{h}\Phi(y)}\, L(dy) + {\cal O}(h^\infty) e^{\frac{\Phi(x)}{h}}\norm{u}_{H_{\Phi}(V)}, \quad x\in \widetilde{U},
\endeq
valid for all $u\in H_{\Phi}(V)$. In particular, we can apply (\ref{LG9}) to $u\in H_{\Phi}(\Omega)$, and when doing so, let us recall that
\begeq
\label{LG10}
\abs{u(x)} \leq {\cal O}(1) h^{-n} e^{\frac{\Phi(x)}{h}}\norm{u}_{H_{\Phi}(\Omega)}, \quad x\in V,
\endeq
in view of~\cite[Proposition 2.3]{DelHiSj}. Let $\chi \in C^{\infty}_0(V;[0,1])$ be such that $\chi = 1$ near $\overline{U}$. Using (\ref{LG10}) and Proposition \ref{prop1}, we see that
\begin{multline}
\label{LG11}
\abs{\frac{1}{h^n} \int_{V} e^{\frac{2}{h}\Psi(x,\overline{y})} (1-\chi(y)) a(x,\overline{y};h)  u(y) e^{-\frac{2}{h}\Phi(y)}\, L(dy)} \\
\leq {\cal O}(1) e^{-\frac{1}{Ch}} e^{\frac{\Phi(x)}{h}}\norm{u}_{H_{\Phi}(\Omega)}, \quad x\in \widetilde{U}.
\end{multline}
We get, combining (\ref{LG9}) and (\ref{LG11}), when $u\in H_{\Phi}(\Omega)$,
\begeq
\label{LG12}
u(y) = \int_{\Omega} \widetilde{K}(y,\overline{z}) \chi(z) u(z) e^{-\frac{2}{h}\Phi(z)}\, L(dz) + {\cal O}(h^\infty) e^{\frac{\Phi(y)}{h}}\norm{u}_{H_{\Phi}(\Omega)}, \quad y\in \widetilde{U},
\endeq
where
\begeq
\label{LG13}
\widetilde{K}(y,\overline{z}) = \frac{1}{h^n} e^{\frac{2}{h}\Psi(y,\overline{z})} a(y,\overline{z};h),\quad (y,z)\in V \times V.
\endeq

\bigskip
\noindent
Next, arguing as in~\cite[Section 5]{RSV},~\cite[Appendix A]{CoHiSj}, we see that the Schwartz kernel of the orthogonal projection in (\ref{LG1}) is of the form $K(x,\overline{y}) e^{-2\Phi(y)/h}$, where $K(x,z)\in {\rm Hol}(\Omega \times \overline{\Omega})$ satisfies
\begeq
\label{LG14}
y\mapsto \overline{K(x,\overline{y})} \in H_{\Phi}(\Omega), \quad x\mapsto K(x,\overline{y}) \in H_{\Phi}(\Omega).
\endeq
Following~\cite{BBSj} and applying (\ref{LG12}) to the function $y\mapsto K(y,\overline{x})\in H_{\Phi}(\Omega)$, we get
\begin{multline}
\label{LG15}
K(y,\overline{x}) = \int_{\Omega} \widetilde{K}(y,\overline{z}) \chi(z) K(z,\overline{x}) e^{-\frac{2}{h}\Phi(z)}\, L(dz) \\ + {\cal O}(h^\infty) e^{\frac{\Phi(y)}{h}}\norm{K(\cdot,\overline{x})}_{H_{\Phi}(\Omega)}, \quad y\in \widetilde{U}.
\end{multline}
Here we have
\begeq
\label{LG16}
\norm{K(\cdot,\overline{x})}_{H_{\Phi}(\Omega)} \leq {\cal O}(1) h^{-n/2} e^{\frac{\Phi(x)}{h}}, \quad x\in \widetilde{U},
\endeq
see~\cite[Chapter 4]{B10}, and combining (\ref{LG15}) and (\ref{LG16}), we infer that
\begeq
\label{LG17}
K(y,\overline{x}) = \int_{\Omega} \widetilde{K}(y,\overline{z}) \chi(z) K(z,\overline{x}) e^{-\frac{2}{h}\Phi(z)}\, L(dz) + {\cal O}(h^\infty) e^{\frac{\Phi(x) + \Phi(y)}{h}}, \quad x, y\in \widetilde{U}.
\endeq
Taking the complex conjugates in (\ref{LG17}) and using the Hermitian property $K(x,\overline{y}) = \overline{K(y,\overline{x})}$, we get
\begeq
\label{LG18}
K(x,\overline{y}) = \int_{\Omega} K(x,\overline{z}) \chi(z) \widehat{K}(z,\overline{y}) e^{-\frac{2}{h}\Phi(z)}\, L(dz) + {\cal O}(h^\infty)e^{\frac{\Phi(x) + \Phi(y)}{h}}, \quad x, y\in \widetilde{U},
\endeq
where, in view of (\ref{LG13}) and (\ref{LG2}),
\begeq
\label{LG19}
\widehat{K}(z,\overline{y}) = \overline{\widetilde{K}(y,\overline{z})} = \frac{1}{h^n} e^{\frac{2}{h}\Psi(z,\overline{y})} b(z,\overline{y};h),\quad b(z,\overline{y};h) = \overline{a(y,\overline{z};h)}.
\endeq
Writing
\begeq
\label{LG20}
\Pi u(x) = \int_{\Omega} K(x,\overline{y})u(y) e^{-2\Phi(y)/h}\, L(dy),\quad u\in L^2(\Omega,e^{-2\Phi/h}L(dx)),
\endeq
we may express (\ref{LG18}) as follows,
\begeq
\label{LG21}
K(x,\overline{y}) = \Pi\left(\widehat{K}(\cdot,\overline{y})\chi\right)(x) + {\cal O}(h^\infty) e^{\frac{\Phi(x) + \Phi(y)}{h}}, \quad x, y\in \widetilde{U}.
\endeq
Here we would like to show that $\Pi\left(\widehat{K}(\cdot,\overline{y})\chi\right)(x)$ is close to $\widehat{K}(x,\overline{y})\chi(x) = \widehat{K}(x,\overline{y})$ for $x\in \widetilde{U}$, and to this end we follow an argument in~\cite{BBSj}, see also Proposition \ref{prop_orth_proj}. The function
\begeq
\label{LG22}
\Omega \ni x \mapsto u_y(x) = \widehat{K}(x,\overline{y})\chi(x) - \Pi\left(\widehat{K}(\cdot,\overline{y})\chi\right)(x)
\endeq
is the solution of the $\overline{\partial}$--equation
\begeq
\label{LG23}
\overline{\partial} u_y = \overline{\partial}\left(\widehat{K}(\cdot,\overline{y})\chi\right) = \widehat{K}(\cdot,\overline{y})\overline{\partial}\chi+\chi\overline{\partial}\widehat{K}(\cdot,\overline{y}),
\endeq
in $\Omega$ of the minimal $L^2(\Omega,e^{-2\Phi/h}L(dx))$ norm, and therefore, by H\"ormander's $L^2$--estimates for the $\overline{\partial}$ operator, see~\cite[Proposition 4.2.5]{Horm_Conv}), we get for any $y\in \widetilde{U}$,
\begin{multline}
\label{LG24}
\int_{\Omega} \abs{u_y(x)}^2 e^{-2\Phi(x)/h}\, L(dx) \leq {\cal O}(h) \int_{\Omega} \frac{1}{c(x)}\abs{\overline{\partial}_x\left(\widehat{K}(x,\overline{y})\chi(x)\right)}^2 e^{-2\Phi(x)/h}\, L(dx) \\
\leq {\cal O}(h) \left(\int_{\Omega} \abs{\nabla \chi(x)}^2 \abs{\widehat{K}(x,\overline{y})}^2 e^{-2\Phi(x)/h}\, L(dx)+\int_{\Omega}\chi(x)\abs{\overline{\partial}_x\widehat{K}(x,\overline{y})}^2 e^{-2\Phi(x)/h}\, L(dx)\right)
\end{multline}
Here we get, in view of (\ref{LG19}) and Proposition \ref{prop1},
\begin{multline}
\label{LG25}
\int_{\Omega} \abs{\nabla \chi(x)}^2 \abs{\widehat{K}(x,\overline{y})}^2 e^{-2\Phi(x)/h}\, L(dx) \leq
{\cal O}(1) \int_{V\backslash U} \abs{\widehat{K}(x,\overline{y})}^2 e^{-2\Phi(x)/h}\, L(dx)\\
={\cal O}(1)\, e^{2\Phi(y)/h}e^{-1/Ch},\quad y\in \widetilde{U}.
\end{multline}
Due to the almost holomorphy of $\Psi$ and the symbol $b(x,\widetilde{y};h)$ in (\ref{LG19}) near the anti-diagonal,
\begeq
\label{LG26}
\abs{\overline{\partial}_x\widehat{K}(x,\overline{y})} \leq \frac{{\cal O}_N(1)}{h^{n+1}} e^{\frac{2}{h}{\rm Re}\, \Psi(x,\overline{y})}\abs{x-y}^N, \quad N=1,2,\ldots,
\endeq
and therefore, by another application of Proposition \ref{prop1},
\begeq
\label{LG27}
\int_{\Omega}\chi(x)\abs{\overline{\partial}_x\widehat{K}(x,\overline{y})}^2 e^{-2\Phi(x)/h}\, L(dx) \leq {\cal O}(h^{\infty})\, e^{2\Phi(y)/h},\quad y\in \widetilde{U}.
\endeq
Combining (\ref{LG24}), (\ref{LG25}), and (\ref{LG27}), we get
\begeq
\label{LG28}
\norm{u_y}_{L^2_\Phi(\Omega)} \leq {\cal O}(h^\infty)e^{\Phi(y)/h},\quad y\in \widetilde{U}.
\endeq

\bigskip
\noindent
It finally remains for us to pass from the weighted $L^2$--bound (\ref{LG28}) on $u_y$ to a pointwise estimate. To this end, we shall apply Proposition \ref{Horm_estimate} to $u_y$, with $V_2 = U$, $V_1 = \widetilde{U}$. Recalling (\ref{LG22}) and using that $\chi = 1$ on $U$, we see that we only have to estimate $h\overline{\partial}_zu_y(z) = h\overline{\partial}_z \widehat{K}(z,\overline{y})$ for $z\in U$, $y\in \widetilde{U}$. Using (\ref{LG26}) and Proposition \ref{prop1}, we obtain that
\begeq
\label{LG29}
\abs{h\overline{\partial}u_y(z)} \leq {\cal O}(h^{\infty}) e^{(\Phi(z) + \Phi(y))/h},\quad y\in \widetilde{U},\,\,z\in U.
\endeq
Combining (\ref{LG3}), (\ref{LG28}), and (\ref{LG29}), we get
\begeq
\label{LG30}
\abs{u_y(x)} \le {\cal O}(h^\infty)\,e^{(\Phi(x)+\Phi(y))/h},\quad x,y\in \widetilde{U},
\endeq
and therefore, using (\ref{LG21}), (\ref{LG22}), and (\ref{LG30}), we infer that
\begeq
\label{LG31}
K(x,\overline{y}) = \widehat{K}(x,\overline{y}) + {\cal O}(h^\infty) e^{(\Phi(x)+\Phi(y))/h},\quad x, y\in \widetilde{U}.
\endeq
Recalling also (\ref{LG19}), we obtain
\begeq
\label{LG32}
K(x,\overline{y}) = \overline{\widetilde{K}(y,\overline{x})} + {\cal O}(h^\infty) e^{(\Phi(x)+\Phi(y))/h},\quad x, y\in \widetilde{U},
\endeq
and taking the complex conjugates and using the Hermitian symmetry of $K$, we get
\begeq
\label{LG33}
K(y,\overline{x}) = \widetilde{K}(y,\overline{x}) + {\cal O}(h^\infty) e^{(\Phi(x)+\Phi(y))/h},\quad x, y\in \widetilde{U}.
\endeq
Switching the variables $x$ and $y$ in (\ref{LG33}), we may therefore summarize the discussion in this section in the following well known result, see~\cite{Z98},~\cite{Catlin},~\cite{BBSj},~\cite{Charles},~\cite{DLM},~\cite{E2}.
\begin{prop}
Let $\Omega \subset \comp^n$ be open pseudoconvex, let $\Phi \in C^{\infty}(\Omega)$ be strictly plurisubharmonic so that {\rm (\ref{eq2.1})} holds, and let $K(x,\overline{y})e^{-2\Phi(y)/h}$ be the Schwartz kernel of the orthogonal projection {\rm (\ref{LG1})}. Let $x_0 \in \Omega$ and let $\widetilde{U} \Subset U \Subset V \Subset \Omega$ be small open neighborhoods of $x_0$, where $U$, $V$ are as in Theorem {\rm \ref{Theorem1}}. We have
\begeq
\label{LG34}
e^{-\Phi(x)/h}\left(K(x,\overline{y}) - \frac{1}{h^n} e^{\frac{2}{h}\Psi(x,\overline{y})}a(x,\overline{y};h)\right)e^{-\Phi(y))/h}  = {\cal O}(h^\infty),
\endeq
uniformly for $x, y\in \widetilde{U}$. Here $\Psi\in C^{\infty}({\rm neigh}((x_0,\overline{x_0}),\comp^{2n}))$ is a polarization of $\Phi$ and the classical symbol $a\in S^0_{{\rm cl}}({\rm neigh}((x_0,\overline{x_0}),\comp^{2n}))$ has been introduced in {\rm (\ref{eq1.2})}.
\end{prop}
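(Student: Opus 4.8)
The plan is to assemble the estimate (\ref{LG34}) by collecting the intermediate results established in this section; the logical chain, following~\cite{BBSj}, runs as follows. First one converts Theorem~\ref{Theorem1} into a pointwise local reproducing property: applying the elliptic bound of Proposition~\ref{Horm_estimate} to $f=\widetilde{\Pi}_V u-u$ and controlling $h\overline{\partial}f=h\overline{\partial}\widetilde{\Pi}_V u$ in $L^{\infty}$ via the almost holomorphy of $\Psi$ and $a$ along the anti-diagonal together with Proposition~\ref{prop1} (this is (\ref{LG7})), one obtains (\ref{LG8}) and hence the local reproducing formula (\ref{LG9}), valid for $u\in H_\Phi(V)$. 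Specializing to $u\in H_\Phi(\Omega)$ and using the a priori bound (\ref{LG10}) and Proposition~\ref{prop1} to discard the contribution of the region where the cutoff $\chi$ vanishes gives the global relation (\ref{LG12}), with the explicit kernel $\widetilde{K}$ of (\ref{LG13}).

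The key step is to feed the exact Bergman kernel into this relation. Applying (\ref{LG12}) to $y\mapsto K(y,\overline{x})\in H_\Phi(\Omega)$ and using the bound (\ref{LG16}) for $\norm{K(\cdot,\overline{x})}_{H_\Phi(\Omega)}$ yields (\ref{LG17}); taking complex conjugates and invoking the Hermitian symmetry of $K$ and the Hermitian normalization (\ref{LG2}) of $\Psi$, this becomes $K(x,\overline{y})=\Pi(\widehat{K}(\cdot,\overline{y})\chi)(x)+{\cal O}(h^\infty)\,e^{(\Phi(x)+\Phi(y))/h}$ for $x,y\in\widetilde{U}$, with $\widehat{K}$ the conjugate model kernel of (\ref{LG19}). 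It then remains to show that $\Pi$ acts almost like the identity on $\widehat{K}(\cdot,\overline{y})\chi$ near $x_0$.

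For that I would repeat, with the obvious changes, the argument of Proposition~\ref{prop_orth_proj}: the function $u_y=\widehat{K}(\cdot,\overline{y})\chi-\Pi(\widehat{K}(\cdot,\overline{y})\chi)$ is the minimal $L^2_\Phi(\Omega)$--norm solution of the $\overline{\partial}$--equation (\ref{LG23}), whose right-hand side splits into a term supported where $\nabla\chi\neq0$ --- hence away from the diagonal and exponentially small by Proposition~\ref{prop1}, giving (\ref{LG25}) --- and a term proportional to $\overline{\partial}_x\widehat{K}(x,\overline{y})$, which by the almost holomorphy of $\Psi$ and of the symbol $b$ along the anti-diagonal obeys (\ref{LG26}) and hence, after integration against $e^{-2\Phi/h}$ and a further use of Proposition~\ref{prop1}, contributes only ${\cal O}(h^\infty)e^{2\Phi(y)/h}$, giving (\ref{LG27}). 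H\"ormander's $L^2$ estimate for $\overline{\partial}$ on the pseudoconvex set $\Omega$ with weight $\Phi$ then yields the $L^2$ bound (\ref{LG28}), and a final application of Proposition~\ref{Horm_estimate} to $u_y$, combined with the pointwise bound (\ref{LG29}) on $h\overline{\partial}u_y$ (again a consequence of (\ref{LG26})), upgrades this to (\ref{LG30}). Plugging (\ref{LG30}) into the displayed identity for $K(x,\overline{y})$ and unwinding (\ref{LG19}) gives (\ref{LG31})--(\ref{LG33}), and relabeling $x\leftrightarrow y$ gives (\ref{LG34}).

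The only genuine difficulty, beyond bookkeeping, is the systematic accounting for the failure of holomorphy at each stage: since $\widetilde{\Pi}_V$ does not produce holomorphic functions, every passage between weighted $L^2$ and pointwise statements carries a $\overline{\partial}$--error, and one must check at each step that it is ${\cal O}(h^\infty)$ --- which is exactly where the infinite-order vanishing of $\partial_{\overline{x}}\Psi$, $\partial_{\overline{y}}\Psi$ and of $\partial_{\overline{x}}a_j$, $\partial_{\overline{\widetilde{y}}}a_j$ on the anti-diagonal enters --- and then absorb it either through the elliptic pointwise estimate of Proposition~\ref{Horm_estimate} or through H\"ormander's weighted $L^2$ estimate for $\overline{\partial}$.
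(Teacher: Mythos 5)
Your proposal is correct and follows essentially the same route as the paper's own argument: pointwise local reproducing property via Proposition \ref{Horm_estimate}, globalization with the cutoff $\chi$, insertion of the exact kernel $K(\cdot,\overline{x})$, Hermitian conjugation to reduce to showing $\Pi$ nearly fixes $\widehat{K}(\cdot,\overline{y})\chi$, and then the H\"ormander $\overline{\partial}$--estimate plus a final elliptic pointwise bound. Your closing remark about tracking the $\overline{\partial}$--errors at each stage is exactly where the paper's care lies as well; no gaps.
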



\begin{thebibliography}{10}

\bibitem{BBSj} R. Berman, B. Berndtsson, and J. Sj\"ostrand, {\it A direct approach to Bergman kernel asymptotics for positive line bundles}, Ark. f\"or Matematik, {\bf 46} (2008), 197–217.

\bibitem{B10} B. Berndtsson, {\it An introduction to things {\rm $\overline{\partial}$}}, Analytic and algebraic geometry, 7–76,
IAS/Park City Math. Ser., 17, Amer. Math. Soc., Providence, RI, 2010.


\bibitem{BoSj} L. Boutet de Monvel and J. Sj\"ostrand. {\it Sur la singularité des noyaux de Bergman et de Szeg\"o}, Journ\'ees \'Equations aux D\'eriv\'ees Partielles de Rennes, Ast\'erisque {\bf 34-35} (1976), 123--164.

\bibitem{Catlin} D. Catlin, {\it The Bergman kernel and a theorem of Tian}, Analysis and geometry in several complex variables (Katata, 1997), 1--23, Trends Math. Birkh\"auser Boston, MA, 1999.

\bibitem{Charles} L. Charles, {\it Berezin-Toeplitz operators, a semi-classical approach}, Comm. Math. Phys. {\bf 239} (2003), 1--28.

\bibitem{CoHiSj} L. Coburn, M. Hitrik, and J. Sj\"ostrand, {\it Positivity, complex FIOs, and Toeplitz operators}, Pure Appl. Anal.  {\bf 1} (2019), 327--357.

\bibitem{DLM} X. Dai, K. Liu, X. Ma, {\it On the asymptotic expansion of Bergman kernel}, J. Differential Geom. {\bf 72} (2006), 1–-41.

\bibitem{Del} A. Deleporte, {\it Toeplitz operators with analytic symbols}, J. Geom. Analysis, Advance online publication. doi:10.1007/s12220-020-00419-w

\bibitem{DelHiSj} A. Deleporte, M. Hitrik, and J. Sj\"ostrand, {\it A direct approach to the analytic Bergman projection}, \url{https://arxiv.org/abs/2004.14606}

\bibitem{Delort} J.-M. Delort, {\it F.B.I. transformation. Second microlocalization and semilinear caustics}, Lecture Notes in Mathematics 1522, Springer-Verlag, Berlin, 1992.

\bibitem{DiSj} M.~Dimassi and J.~Sj\"ostrand, \emph{Spectral asymptotics in the semi-classical limit}, Cambridge University Press, Cambridge, 1999.

\bibitem{E1} M. Engliš, {\it The asymptotics of a Laplace integral on a K\"ahler manifold}, J. Reine Angew. Math. {\bf 528} (2000), 1–39.

\bibitem{E2} M. Engliš, {\it Weighted Bergman kernels and quantization}, Comm. Math. Phys. {\bf 227} (2002), 211–-241.

\bibitem{GrSjDuke} A. Grigis and J. Sj\"ostrand, {\it Front d'onde analytique et sommes de carr\'es de champs de vecteurs},
Duke Math. J. {\bf 52} (1985), 35--51.

\bibitem{GrSj} A. Grigis and J. Sj\"ostrand, {\it Microlocal analysis for differential operators}, Cambridge University Press, 1994.

\bibitem{HiSj15} M.~Hitrik and J.~Sj\"ostrand, \emph{Two minicourses on analytic microlocal analysis},
"Algebraic and Analytic Microlocal Analysis", Springer Proceedings in Mathematics and Statistics {\bf 269} (2018), 483--540.

\bibitem{HiSj} M. Hitrik and J. Sj\"ostrand, {\it Analytic second microlocalization: a semiglobal approach}, work in progress.

\bibitem{Ho65} L. Hörmander,  {\it {\rm $L^2$} estimates and existence theorems for the $\overline{\partial}$ operator}, Acta Math. {\bf 113} (1965), 89–-152.

\bibitem{Ho69} L. H\"ormander, {\it Fourier Integral Operators}, Lectures at the Nor\-dic Sum\-mer School of Mathe\-matics, 1969, with com\-ments by J. Sj\"os\-trand, 2018,\\
\url{https://portal.research.lu.se/portal/en/publications/fourier-integral-operators(7730e675-f8df-4464-9cfe-2faeb4619441).html}.

\bibitem{H_book} L. H\"{o}rmander, {\it The analysis of linear partial differential operators} (vol. I--IV), Springer Verlag, 1985.

\bibitem{Horm_Conv} L. H\"ormander, {\it Notions of convexity}, Birkh\"auser, 1994.

\bibitem{MM} X. Ma and G. Marinescu, {\it Holomorphic Morse inequalities and Bergman kernels}, Prog. Math., vol. 254, Birkh\"auser Verlag, Basel, 2007.

\bibitem{Ma} J. N.~Mather, {\it On Nirenberg's proof of Malgrange's preparation theorem}, Proc. Liverpool Singularities-Symp. I, Lecture Notes in Math. {\bf 192} (1971), 116--120, Springer, Berlin, 1971.

\bibitem{MelSj74} A. Melin and J. Sj\"ostrand, {\it Fourier integral operators with complex-valued phase functions}, Fourier integral operators and partial differential equations, Lecture Notes in Math. {\bf 459} (1975), 120–-223, Springer, Berlin, 1975.

\bibitem{MelSj} A. Melin and J. Sj\"ostrand, {\it Determinants of pseudodifferential operators and complex deformations of phase space}, Methods Appl. Anal. {\bf 9} (2002), 177–-237.

\bibitem{RSV} O. Rouby, J. Sjöstrand, and S. V\~u Ngoc, {\it Analytic Bergman operators in the semiclassical limit}, Duke Math. J.
{\bf 16} (2020), 3033–-3097.

\bibitem{Sj82} J. Sj\"ostrand, {\it Singularit\'es analytiques microlocales}, Ast\'erisque, {\bf 95} (1982), 1--166,
Soc. Math. France, Paris.

\bibitem{Sj96} J.~Sj\"ostrand, \emph{Function spaces associated to global} {\textrm{I}}-\emph{Lagrangian manifolds}, Structure of solutions of differential equations, Katata/Kyoto, 1995, World Scientific, 1996.

\bibitem{Treves} F. Tr\`eves, {\it Introduction to Pseudodifferential and Fourier Integral Operators}, (vol. I--II), Plenum Publ. Co. 1982.

\bibitem{Z98} S. Zelditch, {\it Szeg\"o kernels and a theorem of Tian}, Int. Math. Res. Not. {\bf 6} (1998), 317–-331.



\end{thebibliography}
\end{document}